\documentclass[a4paper,twoside,11pt,reqno]{memoir}
\usepackage{fullpage}
\usepackage[T1]{fontenc}
\usepackage[utf8]{inputenc}
\usepackage{amsthm}
\usepackage{hyperref}
\usepackage[slantedGreeks, partialup, noDcommand]{kpfonts}
\usepackage{graphicx}
\usepackage[mathscr]{euscript}
\usepackage{caption}
\usepackage{tikz}
\usepackage{braket}
\usepackage{mathrsfs,mathtools}

\usetikzlibrary{trees}

\hypersetup{ocgcolorlinks=true,allcolors=testc}
\hypersetup{
	colorlinks   = true,
	citecolor    = black
}
\hypersetup{linkcolor=black}
\hypersetup{urlcolor=black}

\newcommand{\ketbra}[2]{\ket{#1}\!\!\bra{#2}}

\newtheorem{theorem}{Theorem}
\newtheorem{lemma}[theorem]{Lemma}

\newtheorem{proposition}[theorem]{Proposition}

\newtheorem{remark}[theorem]{Remark}
\newtheorem{definition}[theorem]{Definition}

\newcommand{\C}{\mathbb{C}} 
\newcommand{\R}{\mathbb{R}} 
\newcommand{\N}{\mathbb{N}} 
\newcommand{\Z}{\mathbb{Z}} 
\newcommand{\e}{\epsilon}
\newcommand{\eps}{\epsilon}
\newcommand{\E}{\mathbb{E}}
\newcommand{\T}{\mathbb{T}} 
\newcommand{\cO}{\mathcal{O}}
\newcommand{\un}{\mathbf{1}}
\newcommand{\go}{K} 
\newcommand{\grpord}{K} 
\newcommand{\bh}{\textnormal{BH}}
\newcommand{\BH}{\textnormal{BH}}
\newcommand{\cS}{\mathcal{S}}
\renewcommand{\d}{\textnormal d} 

\newcommand{\cj}{\mathcal{J}}
\newcommand{\ba}{\mathbf{a}}

\newcommand{\bi}{\mathbf{i}}
\newcommand{\bj}{\mathbf{j}}
\newcommand{\bp}{\mathbf{p}}
\newcommand{\bq}{\mathbf{q}}
\newcommand{\br}{\mathbf{r}}
\renewcommand{\bs}{\mathbf{s}}
\newcommand{\tr}{\textnormal{tr}}
\newcommand{\om}{\omega}
\newcommand{\Om}{\Omega}
\newcommand{\al}{\alpha}
\newcommand{\conv}{\textnormal{conv}}
\newcommand{\supp}{\textnormal{supp}}
\newcommand{\iu}{i}

\newcommand{\D}{\mathbb{D}}	

\newcommand{\Lesssim}[1]{\lesssim_{{\textstyle\mathstrut}{#1}}}

\newcommand{\underbracedmatrix}[2]{%
	\left(\;
	\smash[b]{\underbrace{
			\begin{matrix}#1\end{matrix}
		}_{#2}}
	\;\right)
	\vphantom{\underbrace{\begin{matrix}#1\end{matrix}}_{#2}}
}

\newcommand{\Gtrsim}[1]{\gtrsim_{{\textstyle\mathstrut}{#1}}}

\title{Three lectures on Fourier analysis and learning theory}

\author{Haonan Zhang
	\thanks{These are expanded notes of three lectures given at IASM of HIT, Harbin, during summer 2024. The author is grateful for their warm hospitality.}
	\thanks{Department of Mathematics,
		University of South Carolina,
		Columbia, SC 29208, USA.}
	\thanks{
		Email: haonanzhangmath@gmail.com, haonanz@mailbox.sc.edu.}}

\begin{document}

	\maketitle
	
	\begin{abstract}
		Fourier analysis on the discrete hypercubes $\{-1,1\}^n$ has found numerous applications in learning theory. A recent breakthrough involves the use of a classical result from Fourier analysis, the Bohnenblust--Hille inequality, in the context of learning low-degree Boolean functions. In these lecture notes, we explore this line of research and discuss recent progress in discrete quantum systems and classical Fourier analysis. 
	\end{abstract}

	\bigskip
	
		
	
	\vspace{-1cm}
	\tableofcontents
	
	\chapter{Learning low-degree Boolean functions and Bohnenblust--Hille inequality}

	\section{Learning low-degree Boolean functions}
	
	The analysis of functions $f$ on discrete hypercubes $\{-1,1\}^n$ is of fundamental importance to theoretical computer science. For many applications, it suffices to consider \emph{Boolean functions} $f:\{-1,1\}^n\to \{-1,1\}$. For the Fourier analysis or functional analysis, $f$ can be real/complex-valued or even vector-valued. In the sequel, we denote by $\R$ and $\C$ the sets of real numbers and complex numbers, respectively. We use $\N$ to denote the set of positive integers.

	Any $f:\{-1,1\}^n\to \C$ has the \emph{Fourier--Walsh expansion}
	\begin{equation}
		f(x)=\sum_{S\subset [n]}\widehat{f}(S)\chi_S(x),\qquad x=(x_1,\dots, x_n)\in \{-1,1\}^n
	\end{equation}
	where for each $S\subset [n]:=\{1,\dots, n\}$, $\widehat{f}(S)\in \C$ is the Fourier coefficient, and $\chi_S$ is the character given by 
	\begin{equation}
		\chi_S(x):=\prod_{j\in S}x_j.
	\end{equation}
	For $d\in \N$, we say $f$ is \emph{of degree at most $d$} if $\widehat{f}(S)=0$ whenever $|S|>d$. The degree serves as an important complexity measure \cite{odonnell14book}. As usual, we equip $\{-1,1\}^n$ with the uniform probability measure  to define the $L^p$-norms and expectation $\E$, and we always consider the $\ell^p$-norms of the Fourier coefficients $(\widehat{f}(S))_S$ so that we have the \emph{Parseval's identity}
	\begin{equation}\label{eq:parseval boolean}
		\|\widehat{f}\|_{2}=\|f\|_2.
	\end{equation}

	One fundamental task in the theoretical computer science is to learn Boolean functions of low degree from its random queries \cite{odonnell14book}. Fix $\epsilon,\delta\in (0,1)$ and $d\ge 1$. Consider the class of low-degree Boolean  functions
	\begin{equation}
		\mathcal{F}^{\le d}_n:=\{f:\{-1,1\}^n\to \{-1,1\} \textnormal{ is of degree at most }d\}.
	\end{equation}
	One may also relax the constraint $|f|=1$ to $|f|\le 1$ in the following arguments.
	Let $N=N(\e,\delta, d,n)$ be the smallest positive integer such that for any $f\in \mathcal{F}^{\le d}_n$ and for any $N$ i.i.d. random variables $X_1,\dots, X_N$ that are uniformly distributed on $\{-1,1\}^n$, as well as the random queries 
	\begin{equation}
		(X_1, f(X_1)), \dots, (X_N, f(X_N)),
	\end{equation}
	one can construct a random function $h:\{-1,1\}^n\to\R$ that is close to the unknown $f$ with high probability 
	\begin{equation}
		\Pr\left(\|h-f\|_2^2\le \e\right)\ge 1-\delta.
	\end{equation}
	The question is: what is $N(\e,\delta, d,n)$? In this lecture, we focus on its dependence on the dimension $n$, and the answer is $\cO_{d,\e,\delta}(\log n)$ which is sharp \cite{ei22learning,eis23low}.
	
	This answer is surprising in the following sense. By Parseval's identity \eqref{eq:parseval boolean}, learning $f$ in $L^2$ is identical to learning $\widehat{f}$ in $\ell^2$. Knowing that $f$ is of degree at most $d$, the number of non-zero Fourier coefficients can be as large as 
	\begin{equation}\label{eq:number of nonzero fourier coefficients}
		\sum_{k\le d}\binom{n}{k}=\cO_d(n^d).
	\end{equation}
	A classical algorithm of Linial, Mansour and Nisan (LMN) \cite{lmn93} shows that $\cO_{d,\e,\delta}(n^d\log n)$ random queries suffices to learn $f$ in the above model, which we shall explain now.

	The idea of the LMN algorithm is very simple. Fix $b>0$ to be chosen later. For any $S\subset [n]$ with $|S|\le d$ we form the empirical Fourier coefficient
	\begin{equation}
		\alpha_S:=\frac{1}{N}\sum_{j=1}^{N}f(X_j)\chi_{S}(X_j).
	\end{equation} 
	By definition, this is the average of $N$ i.i.d. random variables having expectation $\E \alpha_S=\widehat{f}(S)$. So the Chernoff--Hoeffding inequality gives (we used the assumption that $|f|\le 1$ here)
	\begin{equation}\label{ineq:chernoff}
		\Pr\left(|\alpha_S-\widehat{f}(S)|> b\right)\le 2\exp\left(-Nb^2/2\right) .
	\end{equation}
	This, together with the union bound, yields
	\begin{equation}
		\Pr\left(|\alpha_S-\widehat{f}(S)|\le b\textnormal{ for all }|S|\le d\right)\ge 1-2\sum_{k=0}^{d}\binom{n}{k}\exp\left(-Nb^2/2\right) 
	\end{equation}
	which will be bounded from below by $1-\delta$ if we choose 
	\begin{equation}\label{eq:choice of N in terms of b}
		N=\left\lceil \frac{2}{b^2}\log\left(\frac{2}{\delta}\sum_{k=0}^{d}\binom{n}{k}\right)\right\rceil.
	\end{equation}
	Now we form the random function 
	\begin{equation}
		h=h_b:=\sum_{|S|\le d}\alpha_S\chi_S. 
	\end{equation}
	Then with probability at least $1-\delta$, we have 
	\begin{equation}
		\|h-f\|_2^2=\sum_{|S|\le d}|\alpha_S-\widehat{f}(S)|^2\le b^2\sum_{k=0}^{d}\binom{n}{k}.
	\end{equation}
	Setting the right-hand side to be $\e$ gives the value of $b$. Plugging this value of $b$ to \eqref{eq:choice of N in terms of b} yields 
	$N=\cO_{d,\e,\delta}(n^d\log n)$. 
	
	\medskip

	Based on the above discussion, it was believed for a long time that  $\cO_{d,\e,\delta}(n^d\log n)$ is best possible. However, this is far from the right answer $\cO_{d,\e,\delta}(\log n)$ as we mentioned earlier. In 2021, this bound was improved to $\cO_{d,\e,\delta}(n^{d-1}\log n)$ by Iyer, Rao, Reis, Rothvoss and Yehudayoff \cite{iyer21tight}. Later on, Eskenazis and Ivanisvili \cite{ei22learning} improved it to $\cO_{d,\e,\delta}(\log n)$, and the same authors and Streck \cite{eis23low} proved that $\cO(\log n)$ is best possible. The algorithm of Eskenazis and Ivanisvili is almost the same as that of LMN, with the only new ingredient a Fourier analysis inequality named after Bohnenblust and Hille that is the main focus of this lecture. Before going to this inequality, let us revisit the LMN algorithm and see where we lose. 
	
	Recall that $|f|\le 1$ and $\{-1,1\}^n$ is a probability measure space, so by Parseval's identity \eqref{eq:parseval boolean}
	\begin{equation}
		\sum_{|S|\le d}|\widehat{f}(S)|^2=\|f\|_2^2\le \|f\|_\infty^2\le 1,
	\end{equation}
	suggesting that many non-zero $\widehat{f}(S)$ are very small, say no more than  $\mathcal{O}(n^{-d})$ as $n\to \infty$ in view of \eqref{eq:number of nonzero fourier coefficients}. This is negligible compared to the parameter $b$ in \eqref{ineq:chernoff} as $n\to \infty$.
	
	In other words, the Fourier spectrum of $f$ consists of the influential part $\left\{|\widehat{f}(S)|>a\right\}$ and the negligible part $\left\{|\widehat{f}(S)|\le a\right\}$ with $a>0$ some threshold parameter. The cardinality of influential part is small regardless of $n$ by Parseval's identity \eqref{eq:parseval boolean} and Markov's inequality:
	\begin{equation}\label{ineq:number of influential part}
		|\{S:|\widehat{f}(S)|>a\}|\le \frac{1}{a^2}\|\widehat{f}\|_{2}^2=\frac{1}{a^2}\|f\|_{2}^2\le \frac{1}{a^2}\|f\|_{\infty}^2=\frac{1}{a^2}
	\end{equation}
	and we should keep these $\{\widehat{f}(S)\}$ to learn (though we are not able to decide if $|\widehat{f}(S)|>a$ using random queries, we may use $|\alpha(S)|$ as substitutes). As for the negligible part, we may certainly discard a small number of them at a low cost of accuracy. The issue is that their number can be as large as $\cO_d(n^d)$ in view of \eqref{eq:number of nonzero fourier coefficients} and \eqref{ineq:number of influential part}. Can we discard all of them at a small cost? This would be possible if we have the following estimate
	\begin{equation}
		\sum_{|\widehat{f}(S)|\le a}|\widehat{f}(S)|^2\le \sum_{|\widehat{f}(S)|\le a}|\widehat{f}(S)|^p a^{2-p}\stackrel{?}{\le} C(d)a^{2-p}
	\end{equation}
	where we need $p<2$ in the first inequality and that $C(d)$ is dimension-free in the second inequality. To summarize, we may discard a large number of negligible Fourier coefficients at an additional cost of $C(d)a^{2-p}$ in accuracy to improve the sample complexity significantly, provided the crucial estimate 
	\begin{equation}\label{ineq:crucial dimension free to discard negligible}
		\sum_{|\widehat{f}(S)|\le a}|\widehat{f}(S)|^p \stackrel{?}{\le} C(d)
	\end{equation}
	for some $p<2$ and for all Boolean (or bounded in general) $f$ of low degree.  The rest is to optimize over the threshold parameters $a,b>0$. The crucial inequality \eqref{ineq:crucial dimension free to discard negligible} was known:
	
	\begin{theorem}[Boolean Bohnenblust--Hille]
		Fix $d\ge 1$. There exists a constant $C(d)>0$ such that for all $n\ge 1$ and all $f:\{-1,1\}^n\to \R$ of degree at most $d$, we have
		\begin{equation}\label{ineq:boolean bh}
			\|\widehat{f}\|_{\frac{2d}{d+1}}\le C(d)\|f\|_\infty.
		\end{equation}
		Denoting the best constant by $\bh^{\le d}_{\{\pm 1\}}$. Then there exists a universal $C>0$ such that $\bh^{\le d}_{\{\pm 1\}}\le C^{\sqrt{d\log d}}$.
	\end{theorem}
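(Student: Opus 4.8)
The plan is to follow the now-standard two-step strategy behind all modern proofs of Bohnenblust--Hille type inequalities: first establish a \emph{multilinear} (``block-multilinear'') Boolean Bohnenblust--Hille inequality whose constant is at most subexponential in the number of blocks, and then transfer it to the general degree-$d$ case by a polarization-type argument whose \emph{quantitative} form is exactly what produces the exponent $\sqrt{d\log d}$ rather than the naive $d$.

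\medskip
\textbf{Step 1 (reduction to homogeneous).} First I would reduce to the case where $f$ is homogeneous of degree exactly $d$: introduce $d$ fresh coordinates $y_1,\dots,y_d$ and replace $f=\sum_{|S|\le d}\widehat f(S)\chi_S$ by $g(x,y)=\sum_{|S|\le d}\widehat f(S)\,\chi_S(x)\,y_1\cdots y_{d-|S|}$ on $\{-1,1\}^{n+d}$. Then $g$ is homogeneous of degree $d$, its Fourier coefficients are a reindexing of those of $f$ (so $\|\widehat g\|_q=\|\widehat f\|_q$), and $\|g\|_\infty$ and $\|f\|_\infty$ agree up to an absolute constant; so it suffices to prove \eqref{ineq:boolean bh} for homogeneous $g$. (Alternatively one treats the $O(d)$ homogeneous layers separately and reassembles by the triangle inequality in $\ell^{2d/(d+1)}$ at the very end, at the cost of a harmless polynomial-in-$d$ factor.)

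\medskip
\textbf{Step 2 (multilinear Boolean BH).} For an array $(c_\bi)$ indexed by $\bi=(i_1,\dots,i_m)\in[n_1]\times\cdots\times[n_m]$ and $F(x^{(1)},\dots,x^{(m)})=\sum_\bi c_\bi\,x^{(1)}_{i_1}\cdots x^{(m)}_{i_m}$, I would prove $\|c\|_{2m/(m+1)}\le D_m\|F\|_\infty$ by induction on $m$. The inductive step is the classical Blei/Minkowski interpolation: bound $\big\|(c_\bi)\big\|_{\ell^{2m/(m+1)}}$ by interpolating between $\ell^2$ in the last coordinate (Parseval, after fixing the others) and the $(m{-}1)$-linear inequality in the first $m-1$ coordinates, using the Khintchine inequality on $\{-1,1\}^{n_m}$ --- with Haagerup's optimal constants, which tend to $1$ as the exponent approaches $2$ --- together with Boolean hypercontractivity to pass to the ``wrong'' exponent. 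Tracking constants, each step multiplies $D_{m-1}$ by a factor close to $1$, so $D_m$ is subpolynomial in $m$; in any case $D_m\le(\sqrt2)^{\,m-1}$ suffices below.

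\medskip
\textbf{Step 3 (polarization, the quantitative heart) and Step 4 (optimization).} Let $g$ be homogeneous of degree $d$ and fix a block number $L\le d$. Averaging over partitions $[n]=A_1\sqcup\cdots\sqcup A_L$, each ``block-$L$-homogeneous'' projection of $g$ (keeping monomials $\chi_S$ with a prescribed number $d_\ell$ of variables in block $\ell$, $\sum_\ell d_\ell=d$) has sup-norm at most $\|g\|_\infty$; applying a hybrid of Step 2 with a within-block recursion bounds the $\ell^{2d/(d+1)}$-norm of its coefficients by $D_L\cdot(\text{within-block constant})\cdot\|g\|_\infty$. Reassembling $\widehat g$ from these projections over all partitions produces a combinatorial (polarization) loss: in the crude version this is a multinomial coefficient $\approx\binom{d}{d/L,\dots,d/L}\approx L^{d}$, which is exponential. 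The improvement --- and this is the step I expect to be the main obstacle --- is to absorb most of this loss using hypercontractivity so that it enters with a much smaller power, after which the total loss from Steps 2--3, as a function of $L$, behaves like $\exp\!\big(O(L)+O\big(\tfrac{d\log d}{L}\big)\big)$. Choosing $L\asymp\sqrt{d/\log d}$ balances the two contributions and gives $\bh^{\le d}_{\{\pm1\}}\le C^{\sqrt{d\log d}}$. The delicate points are entirely in Step 3 --- identifying the right hybrid block-multilinear inequality and showing that the polarization constant can be traded down from $\exp(\Theta(d))$ to $\exp(\Theta(\sqrt{d\log d}))$ --- while everything else is bookkeeping of constants that individually tend to $1$.
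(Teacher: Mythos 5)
Your overall architecture (Blei/multilinear interpolation, Khintchine--hypercontractivity, polarization, then optimizing an induction parameter) is the same family of argument as the paper's, which proves the inductive inequality $\bh^{=d}_{\{\pm 1\}}\le C(d,k)\,\bh^{=k}_{\{\pm 1\}}$ via Blei's inequality, moment comparison, the degree-$k$ inequality, and a Markov-type polarization bound, and attributes the $C^{\sqrt{d\log d}}$ constant to the careful iteration in Defant--Masty\l o--P\'erez (and Bayart--Pellegrino--Seoane for $\T^n$). But as written your proposal has two genuine gaps. First, Step 1 fails quantitatively on the Boolean cube: for your homogenization $g(x,y)=\sum_{|S|\le d}\widehat f(S)\chi_S(x)y_1\cdots y_{d-|S|}$, taking the sup over $y$ realizes \emph{arbitrary} sign patterns on the homogeneous layers of $f$ (the prefix products $y_1\cdots y_j$ are free signs), so $\|g\|_\infty$ is comparable to $\sup_{\pm}\|\sum_k\pm f_k\|_\infty$, which can exceed $\|f\|_\infty$ by a factor exponential in $d$; likewise splitting into layers costs the layer-projection constant, which is of order $(1+\sqrt2)^d$ (see the paper's Proposition on homogeneous parts), not a polynomial-in-$d$ factor. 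On $\T^n$ this reduction is free because of the rotation $z\mapsto wz$, but on $\{-1,1\}^n$ only $\pm$ flips are available --- this is exactly why the paper treats only the homogeneous case in its sketch and points to the DMP19 \emph{variant of polarization for non-homogeneous polynomials} for the general statement. Any exponential loss here already destroys the claimed bound $\bh^{\le d}_{\{\pm1\}}\le C^{\sqrt{d\log d}}$.

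Second, Step 3 --- which you yourself flag as ``the main obstacle'' --- is precisely the content of the theorem's quantitative claim and is not supplied: you do not identify the mechanism that trades the $\exp(\Theta(d))$ polarization/combinatorial loss down to $\exp(\Theta(\sqrt{d\log d}))$. In the paper's route this is done by the exact cancellation between the multinomial factor $\tfrac{d!}{k!(d-k)!}$ in the coefficient identity and the $\tfrac{k!(d-k)!}{d!}$ in the polarization bound, together with hypercontractive constants $\rho_{2k/(k+1)}^{d-k}$ that are close to $1$ when $k$ is large, iterated with a carefully chosen sequence of degrees $k$; your version leaves this as a hope. Moreover your own bookkeeping does not balance: with a loss of the form $\exp\bigl(O(L)+O(d\log d/L)\bigr)$, the choice $L\asymp\sqrt{d/\log d}$ gives $\exp\bigl(\Theta(\sqrt d\,(\log d)^{3/2})\bigr)$, not $C^{\sqrt{d\log d}}$; one would need $L\asymp\sqrt{d\log d}$, which suggests the intended block inequality has not been pinned down. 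Also, the claim that each block projection of $g$ obtained by averaging over partitions has sup-norm at most $\|g\|_\infty$ is not automatic on the cube for the same reason as in Step 1. So the proposal sketches the right landscape but omits (or gets wrong) exactly the steps where the subexponential constant is won.
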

	
	This inequality was first proved by Blei \cite{blei01}, and was recently revisited by Defant, Masty\l o and P\'erez \cite{dmp19} with a new method achieving this sub-exponential upper bound of $C^{\sqrt{d\log d}}$. We will come back to this upper bound later, but the optimal upper bound remains open. However, very recently, Arunachalam, Dutt, Gutiérrez and Palazuelos \cite{carlos24bh} obtained the optimal constant $C(d)=2^{\frac{d-1}{d}}$ when restricted to Boolean functions $f:\{-1,1\}^n\to \{-1,1\}$. 

	With this inequality at hand, we may finish the argument of Eskenazis and Ivanisvili \cite{ei22learning} proving the sufficiency of $\cO_{d,\e,\delta}(\log n)$. Starting from \eqref{eq:choice of N in terms of b}, we shall construct a random function $h$ with ``influential Fourier coefficients" only. Fix a threshold parameter $a>b$, and consider
	\begin{equation}
		\cS_a:=\{S:|S|\le d \textnormal{ and }|\alpha_S|\ge a\}.
	\end{equation}
	Continuing our discussion above, we know that with probability at least $1-\delta$, 
	\begin{equation}
		\begin{cases}
			|\widehat{f}(S)|\ge |\alpha_S|-|\alpha_S-\widehat{f}(S)|\ge a-b & S\in \cS_a\\
			|\widehat{f}(S)|\le |\alpha_S|+|\alpha_S-\widehat{f}(S)|\ge a+b & S\notin \cS_a
		\end{cases}.
	\end{equation}
	By Markov's inequality and Boolean Bohnenblust--Hille \eqref{ineq:boolean bh},
	\begin{equation}
		|\cS_a|\le (a-b)^{\frac{2d}{d+1}}\sum_{S\in \cS_a}|\widehat{f}(S)|^{\frac{2d}{d+1}}
		\le (a-b)^{\frac{2d}{d+1}}\left(\bh^{\le d}_{\{\pm 1\}}\right)^{\frac{2d}{d+1}}.
	\end{equation}
	Consider the random function 
	\begin{equation}
		h=h_{a,b}:=\sum_{S\in \cS_a}\alpha_S\chi_S.
	\end{equation}
	All combined, we have  with probability at least $1-\delta$, 
	\begin{align*}
		\|h-f\|_2^2
		&=\sum_{S\in \cS_a}|\widehat{f}(S)-\alpha_S|^2+\sum_{S\notin \cS_a}|\widehat{f}(S)|^2\\
		&\le |\cS_a|b^2+(a+b)^{\frac{2}{d+1}}\sum_{|S|\le d}|\widehat{f}(S)|^{\frac{2d}{d+1}}\\
		&\le \left(\bh^{\le d}_{\{\pm 1\}}\right)^{\frac{2d}{d+1}}\left((a-b)^{-\frac{2d}{d+1}}b^2+(a+b)^{\frac{2}{d+1}}\right).
	\end{align*}
	Choosing $a=b(1+\sqrt{d+1})$, we have 
	\begin{equation}
		\|h-f\|_2^2\le \left(\bh^{\le d}_{\{\pm 1\}}\right)^{\frac{2d}{d+1}} b^{\frac{2}{d+1}}\left((d+1)^{-\frac{d}{d+1}}+(2+\sqrt{d+1})^{\frac{2}{d+1}}\right).
	\end{equation}
	One can show that \cite{ei22learning}
	\begin{equation}\label{ineq:exercise about d}
		(d+1)^{-\frac{d}{d+1}}+(2+\sqrt{d+1})^{\frac{2}{d+1}}\le \left(e^4(d+1)\right)^{\frac{1}{d+1}},\qquad d\ge 1.
	\end{equation}
	With this inequality, we may prove $\|h-f\|_2^2\le \e$ provided that 
	$$b^2\le e^{-5}d^{-1}\e^{d+1}\left(\bh^{\le d}_{\{\pm 1\}}\right)^{-2d}.$$ 
	Plugging this in \eqref{eq:choice of N in terms of b} gives 
	\begin{equation}
		N=\frac{e^8 d^2}{\e^{d+1}}\left(\bh^{\le d}_{\{\pm 1\}}\right)^{2d}\log\left(\frac{n}{\delta}\right).
	\end{equation}
	
	To conclude, we proved the main result of \cite{ei22learning}.
	
	\begin{theorem}
		Suppose that $\epsilon,\delta\in (0,1)$, $f:\{-1,1\}^n\to [-1,1]$ is of degree at most $d$ and
		\begin{equation*}
			N\ge \frac{e^8 d^2}{\epsilon^{d+1}}\left(\textnormal{BH}^{\le d}_{\{\pm 1\}}\right)^{2d}\log\left(\frac{n}{\delta}\right).
		\end{equation*}
		Then given $N$  uniformly random independent queries 
		$$(x,f(x)),\qquad x\in \{-1,1\}^n,$$ 
		one can construct a random $h:\{-1,1\}^n\to \real $ such that 
		\begin{equation*}
			\|h-f\|_2^2\le \epsilon,
		\end{equation*}
		with probability at least $1-\delta$. 
	\end{theorem}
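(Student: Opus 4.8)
The plan is to run the thresholded Linial--Mansour--Nisan algorithm exactly as in the discussion preceding the statement, with the Boolean Bohnenblust--Hille inequality \eqref{ineq:boolean bh} as the one non-elementary input; everything else is concentration of measure followed by an optimization over two threshold parameters.

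First I would fix a parameter $b>0$ (to be pinned down at the very end) and, from the $N$ queries $(X_1,f(X_1)),\dots,(X_N,f(X_N))$, form the empirical Fourier coefficients $\alpha_S=\frac1N\sum_{j=1}^N f(X_j)\chi_S(X_j)$ for every $|S|\le d$. Since $|f|\le 1$, each $\alpha_S$ is an average of $N$ i.i.d.\ variables in $[-1,1]$ with mean $\widehat{f}(S)$, so the Chernoff--Hoeffding bound \eqref{ineq:chernoff} together with a union bound over the at most $\sum_{k\le d}\binom nk$ relevant sets shows that the event $\mathcal E_b=\{\,|\alpha_S-\widehat{f}(S)|\le b\ \text{for all }|S|\le d\,\}$ has probability at least $1-\delta$, provided $N$ is chosen as in \eqref{eq:choice of N in terms of b}.

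Next I would discard the apparently negligible coefficients. Fix a second threshold $a>b$, put $\cS_a=\{S:|S|\le d,\ |\alpha_S|\ge a\}$ and set $h=\sum_{S\in\cS_a}\alpha_S\chi_S$. On $\mathcal E_b$ one has $|\widehat{f}(S)|\ge a-b$ for $S\in\cS_a$ and $|\widehat{f}(S)|\le a+b$ for $S\notin\cS_a$. Markov's inequality together with \eqref{ineq:boolean bh} then bounds the number of surviving coefficients in a dimension-free way, $|\cS_a|\le (a-b)^{-\frac{2d}{d+1}}\bigl(\bh^{\le d}_{\{\pm1\}}\bigr)^{\frac{2d}{d+1}}$, and Parseval's identity gives
\[
\|h-f\|_2^2=\sum_{S\in\cS_a}|\alpha_S-\widehat{f}(S)|^2+\sum_{S\notin\cS_a}|\widehat{f}(S)|^2\le |\cS_a|\,b^2+(a+b)^{\frac2{d+1}}\sum_{|S|\le d}|\widehat{f}(S)|^{\frac{2d}{d+1}} .
\]
Applying \eqref{ineq:boolean bh} once more to the last sum yields $\|h-f\|_2^2\le \bigl(\bh^{\le d}_{\{\pm1\}}\bigr)^{\frac{2d}{d+1}}\bigl((a-b)^{-\frac{2d}{d+1}}b^2+(a+b)^{\frac2{d+1}}\bigr)$.

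Finally I would optimize: the choice $a=b(1+\sqrt{d+1})$ together with the elementary estimate \eqref{ineq:exercise about d} collapses the bound to $\|h-f\|_2^2\le \bigl(\bh^{\le d}_{\{\pm1\}}\bigr)^{\frac{2d}{d+1}}b^{\frac2{d+1}}(e^4(d+1))^{\frac1{d+1}}$, which is at most $\epsilon$ as soon as $b^2\le e^{-5}d^{-1}\epsilon^{d+1}\bigl(\bh^{\le d}_{\{\pm1\}}\bigr)^{-2d}$. Substituting this value of $b$ into \eqref{eq:choice of N in terms of b} and absorbing $\log\!\bigl(\tfrac2\delta\sum_{k\le d}\binom nk\bigr)$ into $\log(n/\delta)$ at the cost of the stated absolute constant gives the threshold $N\ge \frac{e^8d^2}{\epsilon^{d+1}}\bigl(\bh^{\le d}_{\{\pm1\}}\bigr)^{2d}\log(n/\delta)$ claimed in the statement. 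The only genuinely hard input — and the reason the bound is $\log n$ rather than the $n^d\log n$ of the plain LMN algorithm — is the \emph{dimension-freeness} of the Bohnenblust--Hille constant (with $\bh^{\le d}_{\{\pm1\}}\le C^{\sqrt{d\log d}}$); by comparison the concentration step and the two-parameter tuning are routine.
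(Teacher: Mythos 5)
Your proposal is correct and follows essentially the same route as the paper: the thresholded LMN estimator with two parameters $a,b$, Chernoff--Hoeffding plus a union bound for the event $\mathcal{E}_b$, the Markov/Bohnenblust--Hille bound on $|\cS_a|$, the Parseval split of $\|h-f\|_2^2$ with a second application of \eqref{ineq:boolean bh} to the discarded tail, and the choice $a=b(1+\sqrt{d+1})$ combined with \eqref{ineq:exercise about d}. No gaps; this is the argument of Eskenazis--Ivanisvili as presented in the text preceding the theorem.
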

	
	\begin{remark}
		In view of the results of Arunachalam, Dutt, Gutiérrez and Palazuelo \cite{carlos24bh}, one may replace $\textnormal{BH}^{\le d}_{\{\pm 1\}}$ with $2^{\frac{d-1}{d}}$ when $f$ in the above theorem is Boolean.  Then the number of random queries satisfies
		\begin{equation}
			N\ge \frac{4^{d-1}e^8 d^2}{\e^{d+1}}\log\left(\frac{n}{\delta}\right).
		\end{equation}
	\end{remark}
	
	We close this section with one more application of Bohnenblust--Hille inequality to analysis of Boolean functions. A result of 
	Dinur, Friedgut, Kindler, and O’Donnell \cite{dfko07tail} states that bounded function whose Fourier coefficients decay rapidly enough are close to some \emph{juntas}. Recall that $f:\{-1,1\}^n\to \R$ is said to be a \emph{$k$-junta} if it depends on at most $k$ variables.
	This is a generalization of Bourgain's result \cite{bourgain02}  concerning Boolean functions. In the special case of bounded low-degree functions, one may give a short proof using Bohnenblust--Hille inequality.
	
	\begin{theorem}\label{thm:junta}
		Fix $d\ge 1$. Suppose that $f:\{-1,1\}^n\to [-1,1]$ is of degree at most $d$. Then for any $\e>0$, there exists a $k$-junta $g:\{-1,1\}^n\to \R$ such that 
		\begin{equation}
			\|f-g\|_2\le \e,\qquad  \textnormal{ with }\qquad k\le \frac{d \left(\bh^{\le d}_{\{\pm 1\}}\right)^{2d}}{\e^{2d}}.
		\end{equation} 
	\end{theorem}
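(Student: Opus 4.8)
The plan is to run the Eskenazis--Ivanisvili thresholding argument, but on the \emph{exact} Fourier coefficients rather than empirical ones: split $\supp\widehat f$ into an influential part and a negligible part, observe that the influential sets — being of size at most $d$ — can only touch a small number of coordinates, and take $g$ to be the truncation of $f$ to those coordinates.

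First I would fix a threshold $a>0$ (to be optimized at the end) and set $\mathcal S_a:=\{S\subset[n]:|\widehat f(S)|>a\}$. Since $\|f\|_\infty\le 1$, the Boolean Bohnenblust--Hille inequality \eqref{ineq:boolean bh} gives $\|\widehat f\|_{\frac{2d}{d+1}}\le \bh^{\le d}_{\{\pm 1\}}$, and Markov's inequality in $\ell^{\frac{2d}{d+1}}$ yields
\[
|\mathcal S_a|\le a^{-\frac{2d}{d+1}}\sum_{S}|\widehat f(S)|^{\frac{2d}{d+1}}\le a^{-\frac{2d}{d+1}}\left(\bh^{\le d}_{\{\pm 1\}}\right)^{\frac{2d}{d+1}}.
\]
Next, let $J:=\bigcup_{S\in\mathcal S_a}S$. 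Because $f$ has degree at most $d$, every $S\in\mathcal S_a$ satisfies $|S|\le d$, hence $|J|\le d\,|\mathcal S_a|$. Define the $|J|$-junta $g:=\sum_{S\subset J}\widehat f(S)\chi_S$. If $S\not\subset J$ then $S\notin\mathcal S_a$, so $|\widehat f(S)|\le a$; combining Parseval's identity with one more application of \eqref{ineq:boolean bh},
\[
\|f-g\|_2^2=\sum_{S\not\subset J}|\widehat f(S)|^2\le a^{2-\frac{2d}{d+1}}\sum_{S}|\widehat f(S)|^{\frac{2d}{d+1}}\le a^{\frac{2}{d+1}}\left(\bh^{\le d}_{\{\pm 1\}}\right)^{\frac{2d}{d+1}}.
\]

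It then remains to optimize over $a$: choosing $a=\epsilon^{d+1}\left(\bh^{\le d}_{\{\pm 1\}}\right)^{-d}$ makes the last right-hand side exactly $\epsilon^2$, so $\|f-g\|_2\le\epsilon$, and substituting this value of $a$ into the bound on $|J|$ from the previous step gives $|J|\le d\,\epsilon^{-2d}\left(\bh^{\le d}_{\{\pm 1\}}\right)^{2d}$, which is the claimed bound on $k$. I do not expect any genuine obstacle here: the only nonroutine point is the elementary remark that in the low-degree regime a small collection of influential Fourier coefficients involves only a small set of variables, which is exactly what converts the spectral concentration provided by Bohnenblust--Hille into a junta approximation; the rest is a one-line optimization of the threshold $a$.
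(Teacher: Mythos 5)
Your proposal is correct and follows essentially the same route as the paper's (sketched) proof: threshold the Fourier spectrum at a level $a$, bound the number of large coefficients via Markov's inequality together with \eqref{ineq:boolean bh}, note that their union of supports involves at most $d\,|\mathcal S_a|$ variables, and optimize $a$. The only cosmetic difference is that you take $g=\sum_{S\subset J}\widehat f(S)\chi_S$ rather than keeping only the thresholded coefficients as in the paper; this is still a $|J|$-junta and, if anything, has smaller $L^2$ error, so the argument and the final bound $k\le d\,\epsilon^{-2d}\left(\bh^{\le d}_{\{\pm 1\}}\right)^{2d}$ are exactly as claimed.
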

	
	\begin{proof}
		The proof is of the same spirit as in the learning algorithm in \cite{ei22learning}. The junta $g$  is of the form 
		\begin{equation}
			g=\sum_{S:|\widehat{f}(S)|>a}\widehat{f}(S)\chi_S
		\end{equation}
		for some $a>0$. Details of the remaining arguments can be found in \cite{vz22}.
	\end{proof}
	
	\begin{remark}
		In the special case of Boolean functions of degree at most $d$, it is known that they are already $\cO(2^d)$-juntas \cite{ns94degree,hatami20degree,jake22degree}.
	\end{remark}
	
	
	\section{Bohnenblust--Hille inequality: a brief history}
	
	In 1913, Bohr \cite{bohr13} asked a problem  concerning the convergence of Dirichlet series, known as \emph{Bohr's strip problem}. The problem was answered by Bohnenblust and Hille in 1931 \cite{bh31}. In the solution, they discovered the first Bohnenblust--Hille inequality on the circle groups $\T^n$. It has two versions, one in terms of \emph{multi-linear forms}, as a generalization of the following result of Littlewood \cite{littlewood30} in 1930. Denote by $(e_j)_{1\le j\le n}$ the canonical basis of $\C^n$. We use $\D:=\{z\in \C:|z|<1\}$ to denote the open unit disc. 
	
	\begin{theorem}[Littlewood's 4/3 inequality]\label{thm:littlewood}
		There exists a constant $C>0$ such that for any $n\ge 1$ and any bilinear form $B:\C^n\times \C^n\to \C$ we have 
		\begin{equation}
			\left(\sum_{i,j=1}^{n}|B(e_i,e_j)|^{\frac{4}{3}}\right)^{\frac{3}{4}}\le C\sup_{z,w\in \D^n}|B(z,w)|.
		\end{equation}
	\end{theorem}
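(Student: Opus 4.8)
The plan is to reproduce Littlewood's original argument, which has three ingredients: a ``one variable at a time'' computation of the supremum, Khintchine's inequality, and an elementary interpolation between mixed $\ell^p$-norms of the coefficient matrix. Write $a_{ij}=B(e_i,e_j)$, so that $B(z,w)=\sum_{i,j}a_{ij}z_iw_j$; we may assume $a_{ij}\ge 0$ after replacing each $a_{ij}$ by $|a_{ij}|$, and since $B$ is affine in each variable we may compute $M:=\sup_{z,w\in\D^n}|B(z,w)|$ as a supremum over the torus $\T^n$ (maximum modulus in each variable). First I would fix $z$ in the closed polydisc $\overline{\D}^n$: the inner linear form $w\mapsto\sum_j\big(\sum_i a_{ij}z_i\big)w_j$ has supremum $\sum_j|\sum_i a_{ij}z_i|$ over $w\in\T^n$, so $\sum_j|\sum_i a_{ij}z_i|\le M$ for every such $z$, and symmetrically $\sum_i|\sum_j a_{ij}w_j|\le M$ for every $w\in\overline{\D}^n$. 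Then I would average the first bound over i.i.d. random signs (or Steinhaus variables) $z$ and use the lower Khintchine inequality $\E\,|\sum_i a_{ij}z_i|\ge c\,(\sum_i a_{ij}^2)^{1/2}$, with $c>0$ universal, to obtain the two mixed-norm estimates
\begin{equation*}
	\sum_j\Big(\sum_i a_{ij}^2\Big)^{1/2}\le c^{-1}M,\qquad \sum_i\Big(\sum_j a_{ij}^2\Big)^{1/2}\le c^{-1}M .
\end{equation*}

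The real content is then the elementary interpolation inequality
\begin{equation*}
	\Big(\sum_{i,j}a_{ij}^{4/3}\Big)^{3/4}\le\Big(\sum_i\big(\sum_j a_{ij}^2\big)^{1/2}\Big)^{1/2}\Big(\sum_j\big(\sum_i a_{ij}^2\big)^{1/2}\Big)^{1/2},
\end{equation*}
since combining it with the two estimates above gives $\big(\sum_{i,j}a_{ij}^{4/3}\big)^{3/4}\le c^{-1}M$, which is the theorem with $C=c^{-1}$. I would prove this inequality in three moves. (i) For each fixed $i$, split $a_{ij}^{4/3}=a_{ij}^{2/3}\cdot a_{ij}^{2/3}$ and apply H\"older over $j$ with exponents $3$ and $3/2$ to get $\sum_j a_{ij}^{4/3}\le\big(\sum_j a_{ij}^2\big)^{1/3}\big(\sum_j a_{ij}\big)^{2/3}$. (ii) Sum over $i$ and apply H\"older over $i$ with exponents $3/2$ and $3$ to get $\sum_{i,j}a_{ij}^{4/3}\le\big(\sum_i(\sum_j a_{ij}^2)^{1/2}\big)^{2/3}\big(\sum_i(\sum_j a_{ij})^2\big)^{1/3}$. (iii) Bound the last factor by Minkowski's inequality, $\big(\sum_i(\sum_j a_{ij})^2\big)^{1/2}=\big\|\sum_j (a_{ij})_i\big\|_{\ell^2_i}\le\sum_j\big\|(a_{ij})_i\big\|_{\ell^2_i}=\sum_j(\sum_i a_{ij}^2)^{1/2}$, so that $\big(\sum_i(\sum_j a_{ij})^2\big)^{1/3}\le\big(\sum_j(\sum_i a_{ij}^2)^{1/2}\big)^{2/3}$. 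Substituting and raising to the power $3/4$ yields the displayed inequality.

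The first paragraph is routine; the whole game is in the interpolation lemma. The delicate point there is the precise pairing of H\"older exponents --- $(3,\tfrac{3}{2})$ in the inner sum over $j$, then $(\tfrac{3}{2},3)$ in the outer sum over $i$, followed by the Minkowski step --- which is exactly what makes the spurious dimensional factors cancel and forces the exponent $\tfrac{4}{3}$; any other choice would leave an uncancelled power of $n$. So the main obstacle, beyond routine bookkeeping, only appears if one wants the sharp constant rather than a universal one: then one must track the optimal Khintchine constant together with the losses in the maximum-modulus and sign-averaging reductions. For the theorem as stated, assembling moves (i)--(iii) with the two estimates of the first paragraph completes the argument.
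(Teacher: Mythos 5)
Your argument is correct and is essentially the paper's proof: the same Khintchine inequality for random signs/Steinhaus variables combined with the same H\"older--H\"older--Minkowski mixed-norm interpolation (your steps (i)--(iii) are exactly the computation in \eqref{ineq:blei for two variables} followed by the Minkowski step), the only difference being that you apply Khintchine to the mixed norms first and interpolate afterwards, whereas the paper interpolates first; this reordering is immaterial. One caveat: the opening reduction ``we may assume $a_{ij}\ge 0$'' is not a legitimate reduction, because replacing $a_{ij}$ by $|a_{ij}|$ can strictly increase $\sup_{z,w\in\D^n}|B(z,w)|$ --- for instance for $a_{rs}=e^{2\pi i rs/n}$ the supremum is of order $n^{3/2}$, while for the matrix of moduli it is $n^2$ --- so proving the inequality for nonnegative matrices does not imply it in general. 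Fortunately this step is also unnecessary: nothing later in your argument uses positivity, so simply carry $|a_{ij}|$ in all the mixed norms (Khintchine gives $\E\,|\sum_i a_{ij}z_i|\ge c\,(\sum_i |a_{ij}|^2)^{1/2}$ for complex coefficients) and the proof goes through verbatim.
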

	
	Another version is the  \emph{polynomial Bohnenblust--Hille inequality}, as we have seen in the last section for the discrete hypercubes. The original work of Bohnenblust and Hille concerns the circle groups $\T^n$. From now on, we shall discuss the supreme norm over different sets. For this we use the notation $\|f\|_X$ for $\sup_{x\in X}|f(x)|$.  For a multi-index $\alpha=(\alpha_1,\dots, \alpha_n)$ of non-negative integers, we write  $|\alpha|:=\sum_{j=1}^{n}\alpha_j$ and 
	$$
	z^\alpha:=z_1^{\alpha_1}\cdots z_n^{\alpha_n},\qquad z=(z_1,\dots, z_n)\in \C^n.
	$$

	\begin{theorem}[Circle Bohnenblust--Hille]\label{thm:bh for torus}
		Fix $d\ge 1$. There exists a constant $C(d)>0$ such that for any $n\ge 1$ and any analytic polynomial 
		$f(z)=\sum_{|\alpha|\le d}\widehat{f}(\alpha)z^{\alpha}$ of degree at most $d$, we have 
		\begin{equation}\label{ineq:bh for torus}
			\|\widehat{f}\|_{\frac{2d}{d+1}}\le C(d)\|f\|_{\T^n}.
		\end{equation}
		Denoting the best constant by $\bh^{\le d}_{\T}$, we have $\bh^{\le d}_{\T}\le C^{\sqrt{d\log d}}$ for some universal $C>0.$
	\end{theorem}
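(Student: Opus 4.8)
The plan is to route through the \emph{multilinear} Bohnenblust--Hille inequality — of which Littlewood's $4/3$ inequality (Theorem~\ref{thm:littlewood}) is the bilinear instance — and to reduce the non-homogeneous statement to homogeneous polynomials, following the classical scheme of \cite{bh31}; the real content is then in tracking the constants precisely enough to beat the naive exponential bound. \emph{Step 1 (reduce to homogeneous polynomials).} For $0\le k\le d$ set $f_k(z)=\int_0^{2\pi}f(\mathrm{e}^{\iu\theta}z)\,\mathrm{e}^{-\iu k\theta}\,\frac{\d\theta}{2\pi}$. Since $f(\mathrm{e}^{\iu\theta}z)=\sum_\alpha\widehat f(\alpha)\,\mathrm{e}^{\iu|\alpha|\theta}z^\alpha$, this extracts the $k$-homogeneous part of $f$; thus $\widehat{f_k}$ is supported on $\{|\alpha|=k\}$ and $\|f_k\|_{\T^n}\le\|f\|_{\T^n}$. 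Because $k\mapsto\frac{2k}{k+1}$ is increasing we have $\|\widehat{f_k}\|_{\frac{2d}{d+1}}\le\|\widehat{f_k}\|_{\frac{2k}{k+1}}$, and summing over the pairwise disjoint supports,
\begin{equation*}
\|\widehat f\|_{\frac{2d}{d+1}}\le\sum_{k=0}^{d}\|\widehat{f_k}\|_{\frac{2d}{d+1}}\le\sum_{k=0}^{d}\|\widehat{f_k}\|_{\frac{2k}{k+1}}.
\end{equation*}
So it is enough to prove the homogeneous estimate $\|\widehat g\|_{\frac{2k}{k+1}}\le D_k\|g\|_{\T^n}$ for every $k$-homogeneous $g$, with $D_k\le C_0^{\sqrt{k\log k}}$ for a universal $C_0$; the outer sum over $k\le d$ then costs only the factor $d+1$, absorbed into a slightly larger $C$.

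\emph{Step 2 (polarization and the multilinear inequality).} Let $L$ be the symmetric $k$-linear form with $g(z)=L(z,\dots,z)$, so $|L(e_{j_1},\dots,e_{j_k})|=\frac{\alpha!}{k!}|\widehat g(\alpha)|$ whenever $(j_1,\dots,j_k)$ has type $\alpha$, and the polarization inequality on the polydisc gives $\|L\|_{(\T^n)^k}\le\frac{k^k}{k!}\|g\|_{\T^n}$. I will then invoke the multilinear Bohnenblust--Hille inequality,
\begin{equation*}
\Big(\sum_{j_1,\dots,j_k=1}^{n}|L(e_{j_1},\dots,e_{j_k})|^{\frac{2k}{k+1}}\Big)^{\frac{k+1}{2k}}\le A_k\,\|L\|_{(\T^n)^k},
\end{equation*}
with the essential point that $A_k$ can be taken to grow only \emph{polynomially} in $k$. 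This refinement comes from running the classical Blei-type induction \cite{blei01} on the number of slots — peel one argument off, apply Khinchin's inequality there with a carefully interpolated exponent together with Minkowski's integral inequality, and use the inductive hypothesis — while using near-optimal Khinchin constants, so that the loss incurred at each induction step tends to $1$ rather than staying at $\sqrt2$.

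\emph{Step 3 (the refined passage from multilinear to homogeneous).} Feeding Step~2 into Step~1 naively forces the crude bound $\frac{k!}{\alpha!}\le\frac{k^k}{k!}$ on the multinomial weights and loses a factor $\mathrm{e}^{O(k)}$, giving only the exponential estimate $D_k\le C^k$. To get down to $C_0^{\sqrt{k\log k}}$ one must \emph{interpolate}. After unravelling, the multilinear inequality controls $\big(\sum_\alpha\binom{k}{\alpha}^{1-\frac{2k}{k+1}}|\widehat g(\alpha)|^{\frac{2k}{k+1}}\big)^{\frac{k+1}{2k}}$, a sum of the $|\widehat g(\alpha)|^{2k/(k+1)}$ against the \emph{small} weights $\binom{k}{\alpha}^{-(k-1)/(k+1)}$; one recovers the unweighted $\ell^{2k/(k+1)}$-norm by Hölder on the index set $\{|\alpha|=k\}$, balanced against Parseval's identity $\sum_\alpha|\widehat g(\alpha)|^2\le\|g\|_{\T^n}^2$ (and, in the sharpest form, a Bohr-type $\ell^2$-estimate carrying reciprocal multinomial weights). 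The exponent matching then makes the exponentially large polarization factor appear only to a power of order $\sqrt{(\log k)/k}$, leaving $\log D_k=O(\sqrt{k\log k})$. Equivalently, following Bayart--Pellegrino--Seoane-Sepúlveda, one iterates this step on \emph{blocks} of the $k$ arguments, roughly halving the degree at a $\mathrm{poly}$-cost each time and choosing the block sizes around $\sqrt{k/\log k}$; iterating $O(\log k)$ times yields the same conclusion.

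\emph{Main obstacle.} Steps~1--2 are the textbook Bohnenblust--Hille argument and already yield a finite $\bh^{\le d}_\T$. All the difficulty is in Step~3 — tracking the constants through the interpolation (or block-iteration) with enough precision that \emph{both} the exponential polarization cost \emph{and} any exponential growth in the multilinear constant are diluted down to $\mathrm{e}^{O(\sqrt{d\log d})}$, the $\log d$ being exactly the price of carrying out a logarithmic number of iterations. This is where the hypercontractive Khinchin/Bohr-type estimates are indispensable, and it is also why the optimal value of $\bh^{\le d}_\T$ remains unknown.
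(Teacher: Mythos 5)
Your Steps 1--2 are fine as far as they go: the homogeneous decomposition with $\|f_k\|_{\T^n}\le\|f\|_{\T^n}$ and the embedding $\ell^{2k/(k+1)}\subset\ell^{2d/(d+1)}$ are correct, and together with polarization and the multilinear inequality they do give \emph{some} finite $C(d)$. The gap is that the quantitative claim $\bh^{\le d}_{\T}\le C^{\sqrt{d\log d}}$ --- which is the actual content of the theorem --- is asserted in Step 3 rather than proved, and the specific mechanism you describe does not work as stated. First, your bookkeeping of the naive route is off: passing through the full symmetric $k$-linear form forces the multinomial-weight repair $\max_\alpha\bigl(k!/\alpha!\bigr)^{\frac{k-1}{2k}}\approx\sqrt{k!}$ on top of the polarization factor $k^k/k!$, so even granting a polynomial multilinear constant $A_k$ you land at $D_k\approx (ek)^{k/2}$ (essentially the original 1931 constant), not $C^k$; the exponential bound $C^k$ already requires the hypercontractive block argument, not merely ``near-optimal Khinchin constants.'' Second, the proposed repair --- H\"older on $\{|\alpha|=k\}$ ``balanced against Parseval'' --- cannot be made dimension-free as described: any H\"older step that strips the weights $\bigl(\alpha!/k!\bigr)^{\frac{k-1}{k+1}}$ via a third factor pays a sum over all $\alpha$ with $|\alpha|=k$, i.e.\ a power of $\binom{n+k-1}{k}$, while bounding the weight by its supremum brings back $\sqrt{k!}$. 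You never exhibit the ``exponent matching'' that allegedly dilutes the polarization cost to a power of order $\sqrt{(\log k)/k}$, and your one-sentence alternative (``blocks roughly halving the degree \dots\ block sizes around $\sqrt{k/\log k}$'') is internally inconsistent and is not the Bayart--Pellegrino--Seoane-Sep\'ulveda recursion.

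For comparison: the paper itself does not reprove the torus case either (it cites \cite{bayart14}), but the scheme it sketches for the Boolean cube shows how the subexponential constant actually arises, and it is structurally different from your Step 2--3 route. One never applies the full $k$-linear inequality and then repairs weights; instead the block structure is kept from the start. Blei's inequality (Lemma \ref{lem:blei}) splits the $d$ slots into a set $S$ of size $k$ and its complement; a Khinchin/hypercontractive moment comparison (the Steinhaus analog of Lemma \ref{lem:moment comparison}) is applied only in the $S^c$ variables, at a per-step cost like $\bigl(\tfrac{k+1}{k-1}\bigr)^{(d-k)/2}$, which tends to $1$ as $k$ grows; the degree-$k$ \emph{polynomial} Bohnenblust--Hille inequality is invoked inductively in the $S$ variables; and a mixed polarization bound on $L(x,\dots,x,y,\dots,y)$ (the torus analog of Lemma \ref{lem:polarization}, with no Markov factor, hence only polynomial loss) closes the loop. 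This produces an inductive inequality of the shape \eqref{ineq:inductive} in which both losses are controlled by $d-k$ and polynomial factors, and it is the optimization/iteration over $k$ in that recursion that yields $C^{\sqrt{d\log d}}$. To make your write-up a proof you should either carry out that recursion, tracking the choice of $k$, or simply cite \cite{bayart14} for the constant; as it stands, Step 3 names the difficulty without resolving it.
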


	The sub-exponential upper bound $ C^{\sqrt{d\log d}}$ was obtained by Bayart, Pellegrino and Seoane-Sepúlveda \cite{bayart14}, improving the exponential bound in \cite{defant11bh}. The optimal bound remains open. 
	

	
	Here we will not discuss the general multi-linear Bohnenblust--Hille inequality that extends Theorem \ref{thm:littlewood}. Remark only that in the general $d$-linear form case, $d\ge 2$, the exponent $4/3$ on the left-hand side is replaced by $\frac{2d}{d+1}$.  We present a proof of Theorem \ref{thm:littlewood}  since it contains inspiring arguments for the proof of polynomial Bohnenblust--Hille inequality \eqref{ineq:bh for torus}.
	
	\begin{proof}[Proof of Theorem \ref{thm:littlewood}]
		The proof is borrowed from \cite[Chapter 6]{defant19book}. We show that
		\begin{equation}
			\left(\sum_{i,j=1}^{n}|a_{ij}|^{\frac{4}{3}}\right)^{\frac{3}{4}}\le C\sup_{z,w\in \D^n}\left |\sum_{i,j=1}^{n}a_{ij}z_i w_j\right|.
		\end{equation}
		For this, one uses twice H\"older's inequality, first for $j$ variable in view of 
		$$\frac{3}{4}=\frac{1/2}{2}+\frac{1/2}{1}$$
		and then for $i$ variable noting
		$$ \frac{3}{4}=\frac{1}{2}+\frac{1}{4}$$
		to obtain
		\begin{equation}\label{ineq:blei for two variables}
			\begin{split}
				\left(\sum_{i,j=1}^{n}|a_{ij}|^{\frac{4}{3}}\right)^{\frac{3}{4}}
				\le &\left(\sum_{i}\left(\sum_{j}|a_{ij}|^{2}\right)^{\frac{1}{4}\cdot \frac{4}{3}}\cdot \left(\sum_{j}|a_{ij}|\right)^{\frac{1}{2}\cdot \frac{4}{3}}\right)^{\frac{3}{4}}\\
				\le &\left(\sum_{i}\left(\sum_{j}|a_{ij}|^{2}\right)^{\frac{1}{4}\cdot 2}\right)^{\frac{1}{2}}\cdot \left(\sum_{i}\left(\sum_{j}|a_{ij}|\right)^{\frac{1}{2}\cdot 4}\right)^{\frac{1}{4}}\\
				=&\sqrt{\sum_{i}\left(\sum_{j}|a_{ij}|^{2}\right)^{\frac{1}{2}} \cdot \left(\sum_{i}\left(\sum_{j}|a_{ij}|\right)^{2}\right)^{\frac{1}{2}}}.
			\end{split}
		\end{equation}
		From Minkowski's inequality, we deduce
		
		\begin{align}
			\left(\sum_{i,j=1}^{n}|a_{ij}|^{\frac{4}{3}}\right)^{\frac{3}{4}}
			\le &\sqrt{\sum_{i}\left(\sum_{j}|a_{ij}|^{2}\right)^{\frac{1}{2}} \cdot \sum_{j}\left(\sum_{i}|a_{ij}|^{2}\right)^{\frac{1}{2}} }.
		\end{align}
		Now we apply Khintchin's inequality for Steinhaus random variables \cite{konig14steinhaus}
		\begin{align}
			\left(\sum_{j}|a_{ij}|^{2}\right)^{\frac{1}{2}}
			\le C\int_{\T^n}\left|\sum_{j}a_{ij}w_j\right|\d w
		\end{align}
		so that 
		\begin{equation}
			\sum_{i}\left(\sum_{j}|a_{ij}|^{2}\right)^{\frac{1}{2}}
			\le  C\sup_{w\in \D^n} \sum_{i}\left|\sum_{j}a_{ij}w_j\right|
			\le C\sup_{z,w\in \D^n} \left|\sum_{i}\sum_{j}a_{ij}z_i w_j\right|.
		\end{equation}
		Similarly, we obtain the same upper bound for $\sum_{j}\left(\sum_{i}|a_{ij}|^{2}\right)^{\frac{1}{2}}$, which concludes the proof. 
	\end{proof}
	
		
		\begin{remark}
			The exponent $4/3$ is best possible by looking at $a_{rs}=e^{\frac{2\pi i rs}{n}}$.
		\end{remark}
		
		We refer to \cite{defant19book} for more discussions of Bohnenblust--Hille inequality. 
		
		\section{Bohnenblust--Hille inequality: sketch of the proof}
		
		In this section, we give a proof sketch of the Boolean Bohnenblust--Hille \eqref{ineq:boolean bh}. For simplicity, we consider homogeneous polynomials only (in the $\T^n$ case \eqref{ineq:bh for torus} one may reduce to homogeneous polynomials with the same constants \cite{bayart14,dmp19}). That is, we want to prove
		\begin{equation}
			\|\widehat{f}\|_{\frac{2d}{d+1}}\le \bh^{=d}_{\{\pm1\}}\|f\|_{\{\pm1\}^n}
		\end{equation}
		for all $f=\sum_{|S|=d}\widehat{f}(S)\chi_S$, where we used $\bh^{=d}_{\{\pm1\}}$ to specify the best constant for the homogeneous polynomials of degree $d$. We shall employ a four-step argument to show the following inductive inequality 
		\begin{equation}\label{ineq:inductive}
			\bh^{=d}_{\{\pm1\}}
			\le C(d,k)\bh^{=k}_{\{\pm1\}},\qquad 1<k<d
		\end{equation}
		with 
		\begin{equation}
			C(d,k)=\left(\frac{k+1}{k-1}\right)^{\frac{d-k}{2}}\frac{(1+\sqrt{2})^d d^d}{k^k (d-k)^{d-k}}.
		\end{equation}
		Once we have \eqref{ineq:inductive}, we may apply it repeatedly to obtain an upper bound of $\bh^{=d}_{\{\pm1\}}$.
		
		%
		

		Before proceeding with the proof of \eqref{ineq:inductive}, we need some lemmas. The first lemma can be considered as a multi-variate generalization of \eqref{ineq:blei for two variables}. To formulate it in a compact way, we need some notation. For a multi-index $\bi=(i_1,\dots, i_d)$ and $S\subset [d]$, we use the convention 
		\begin{equation*}
			\sum_{\bi_S}=\sum_{i_j:j\in S}.
		\end{equation*}
		For $S\subset [d]$, we write $S^c=[d]\setminus S$ its complement. For example, for $S=\{1,3,5\}\subset [6]$ we have $S^c=\{2,4,6\}$ and 
		\begin{equation*}
			\sum_{\bi_S}=\sum_{i_1, i_3,i_5},\qquad 	\sum_{\bi_{S^c}}=\sum_{i_2, i_4,i_6}.
		\end{equation*}
		
		The following inequality is named after Blei and can be found in \cite{bayart14}.
		\begin{lemma}[Blei's inequality]\label{lem:blei}
			Fix $n\ge 1$ and $1\le k\le d$. For any scalar matrix $(a_{\bi})_{\bi\in [n]^d}$ we have 
			\begin{equation}\label{ineq:blei general}
				\left(\sum_{\bi\in [n]^d}|a_{\bi}|^{\frac{2d}{d+1}}\right)^{\frac{d+1}{2d}}
				\le \left[\prod_{S\subset [d], |S|=k}
				\left(\sum_{\bi_S}
				\left(\sum_{\bi_{S^c}}
				|a_{\bi}|^2\right)^{\frac{1}{2}\frac{2k}{k+1}}\right)^{\frac{k+1}{2k}}
				\right]^{\binom{d}{k}^{-1}}
			\end{equation}
		\end{lemma}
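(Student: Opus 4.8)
The plan is to establish \eqref{ineq:blei general} first in the ``one-step'' case $k=d-1$ (in any number of variables), and then to iterate down to general $k$. To read the inequality, note that its right-hand side is the geometric mean, over all $k$-subsets $S\subset[d]$, of the $\ell^{2k/(k+1)}$-norm of the $k$-index array $b^S_{\bi_S}:=\big(\sum_{\bi_{S^c}}|a_{\bi}|^2\big)^{1/2}$; thus the lemma asserts $\|a\|_{\frac{2d}{d+1}}\le\big(\prod_{|S|=k}\|b^S\|_{\frac{2k}{k+1}}\big)^{1/\binom{d}{k}}$. When $d=2$ (which forces $k=1$) this is precisely the inequality proved in Theorem~\ref{thm:littlewood}.

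\emph{Step 1: the case $k=d-1$.} Each $(d-1)$-subset of $[d]$ equals $[d]\setminus\{j\}$, so here \eqref{ineq:blei general} reads
\[
\Big(\sum_{\bi\in[n]^d}|a_{\bi}|^{\frac{2d}{d+1}}\Big)^{\frac{d+1}{2d}}\le\prod_{j=1}^{d}\Big(\sum_{\bi_{[d]\setminus\{j\}}}\big(\textstyle\sum_{i_j}|a_{\bi}|^{2}\big)^{\frac{d-1}{d}}\Big)^{\frac{1}{2(d-1)}},
\]
and I would prove this by the same mechanism as in the proof of Theorem~\ref{thm:littlewood}: a sequence of applications of Hölder's inequality, one coordinate at a time, in which at each step the exponent currently carried by the active coordinate is split as a convex combination of $\tfrac12$ and $1$ (this is the role of $\tfrac34=\tfrac{1/2}{2}+\tfrac{1/2}{1}$ and $\tfrac34=\tfrac12+\tfrac14$ there), followed by applications of Minkowski's inequality to pull an inner $\ell^1$-sum out past an outer $\ell^2$-sum (as in the last line of that proof) and so reorganize the bound into the stated product. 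The identical argument, with $d$ replaced by $m$, yields the $(m,m-1)$-case of \eqref{ineq:blei general} for every $m\ge2$.

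\emph{Step 2: iteration.} One runs a downward induction on $k$, the base case $k=d$ being trivial. Assuming \eqref{ineq:blei general} with $k+1$ in place of $k$, apply the $(k+1,k)$-case of Step~1 to each $(k+1)$-index array $b^S$ on its right-hand side. The crucial point is that the ``$\ell^2$-over-the-complement'' operation composes: the arrays thereby produced are $\big(\sum_{\bi_{S\setminus T}}|b^S_{\bi_S}|^2\big)^{1/2}=\big(\sum_{\bi_{T^c}}|a_{\bi}|^2\big)^{1/2}=b^T_{\bi_T}$ for $T\subset S$ with $|T|=k$, and so depend on $T$ alone. Collecting the terms, a fixed $k$-subset $T$ is produced with exponent $(d-k)/\big((k+1)\binom{d}{k+1}\big)$, which equals $\binom{d}{k}^{-1}$ by the identity $(k+1)\binom{d}{k+1}=(d-k)\binom{d}{k}$; this is exactly \eqref{ineq:blei general}.

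I expect the main obstacle to be the exponent bookkeeping in Step~1: one must choose the convex-combination splits coordinate by coordinate so that the running exponents telescope to precisely $\tfrac{2d}{d+1}$ on the left and to the asserted mixed norms on the right, and then apply the Minkowski swaps in a compatible order. Step~2, by contrast, collapses to the displayed binomial identity once the composition property of the arrays $b^S$ is noticed.
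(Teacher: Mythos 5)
Your proposal is correct, and its global structure is genuinely different from the paper's. The paper proves the general-$k$ inequality in one shot: it introduces the mixed norms $\|\ba\|_{\bp}$, uses the averaging identity $\frac{d+1}{2d}=\sum_{S\subset[d],\,|S|=k}\binom{d}{k}^{-1}\frac{1}{p^S_i}$ (where $p^S_i=\frac{2k}{k+1}$ for $i\in S$ and $p^S_i=2$ otherwise) to apply a $\binom{d}{k}$-factor H\"older inequality for mixed norms, and then Minkowski to push each exponent $2$ innermost. You instead isolate the corner case $k=d-1$ (in every dimension $m$) and recover general $k$ by downward induction; your two key observations there --- the composition property $\big(\sum_{\bi_{S\setminus T}}|b^S_{\bi_S}|^2\big)^{1/2}=b^T_{\bi_T}$ for $T\subset S$, and the count that each $k$-set $T$ lies in exactly $d-k$ sets $S$ of size $k+1$, combined with $(k+1)\binom{d}{k+1}=(d-k)\binom{d}{k}$ --- are correct, so your Step~2 is complete and constitutes a purely combinatorial reduction that the paper does not use (it treats all $S$ simultaneously). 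What each buys: the paper needs the many-factor mixed-norm H\"older but no iteration; your route needs only the corner analytic inequality, and the iteration costs nothing because the exponents compose exactly. Two remarks on your Step~1: the splits you quote ($\tfrac12$ and $1$) are the $d=2$ ones; for general $m$ the split in each coordinate is $\frac{m+1}{2m}=\frac1m\cdot\frac12+\frac{m-1}{m}\cdot\frac{m}{2(m-1)}$, i.e., between the exponents $2$ and $\frac{2(m-1)}{m}$ (which equals $1$ only when $m=2$). Equivalently, the corner case is exactly the paper's mixed-norm H\"older argument specialized to the $m$ exponent vectors having a single entry $2$, followed by the same Minkowski swap to bring that $2$ innermost; with that bookkeeping made explicit, both of your steps go through.
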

		\begin{proof}
			Similar to \eqref{ineq:blei for two variables}, the proof is again a combination of H\"older's inequality and Minkowski's inequality, but the use is more complicated. For $\ba=(a_{i_1,\dots, i_d})$ consider 
			\begin{equation}
				\|\ba\|_{\bp}:=\left(
				\sum_{i_1}\left(\cdots 
				\left(\sum_{i_{d-1}}\left(\sum_{i_d}|a_{i_1,\dots, i_d}|^{p_d}\right)^{\frac{p_{d-1}}{p_d}}\right)^{\frac{p_{d-2}}{p_{d-1}}}
				\cdots \right)^{\frac{p_1}{p_2}}
				\right)^{\frac{1}{p_1}},\qquad \bp=(p_1,\dots, p_d).
			\end{equation}
			Then one has H\"older's inequality
			\begin{equation}
				\|\ba\|_{\br}\le \|\ba\|_{\bp}^{\theta}\|\ba\|_{\bq}^{1-\theta}
			\end{equation}
			for $\bp,\bq, \br$ such that 
			\begin{equation}
				\frac{1}{r_i}=\frac{\theta}{p_i}+\frac{1-\theta}{q_i},\qquad 1\le i\le d.
			\end{equation}
			Now for any $S\subset [d]$ with $|S|=k$, we put $\bp^S=(p^S_1,\dots, p^S_d)$ such that 
			\begin{equation}
				p^S_i=
				\begin{cases}
					\frac{2k}{k+1}&i\in S\\
					2& i\notin S
				\end{cases}.
			\end{equation}
			Then for $\bp=(\frac{2d}{d+1},\cdots, \frac{2d}{d+1})$ we have 
			\begin{equation}
				\frac{1}{p_i}=\frac{d+1}{2d}=\sum_{S\subset [d], |S|=k}\frac{1/\binom{d}{k}}{p^S_i},\qquad 1\le i\le d.
			\end{equation}
			Note that the left-hand side of \eqref{ineq:blei general} is nothing but $\|\ba\|_\bp$, and we may conclude the proof using H\"older's inequality
			\begin{equation}
				\|\ba\|_{\bp}\le \prod_{S\subset [d], |S|=k}\|\ba\|^{1/\binom{d}{k}}_{\bp^S}
			\end{equation}
			and Minkowski's inequality to each $\|\ba\|_{\bp^S}$.
		\end{proof}

		The following inequality can be seen as an extension of Khintchin's inequality for Rademacher random variables. 
		\begin{lemma}[Moment comparison]\label{lem:moment comparison}
			For $f:\{-1,1\}^n\to \R$ of degree at most $d$, we have 
			\begin{equation}
				\|f\|_2\le \rho_p^{d}\|f\|_p,\qquad 1\le p<2 
			\end{equation}
			with $\rho_p=(p-1)^{-1/2}$ for $1<p<2$ and $\rho_1=e.$
		\end{lemma}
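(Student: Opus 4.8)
The plan is to deduce the lemma from the hypercontractive (Bonami--Beckner) inequality on the cube, which can be viewed as the sharp form of the Khintchin inequality alluded to in the statement. Recall that this inequality states: for $1\le p\le 2$ and any $g:\{-1,1\}^n\to\R$,
\begin{equation*}
	\Bigl(\sum_{S\subset[n]}(p-1)^{|S|}|\widehat{g}(S)|^2\Bigr)^{1/2}\le \|g\|_p ,
\end{equation*}
equivalently $\|T_{\sqrt{p-1}}\,g\|_2\le\|g\|_p$, where $T_\sigma$ denotes the noise operator with $T_\sigma\chi_S=\sigma^{|S|}\chi_S$.

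First, for $1<p<2$, apply this to $g=f$. Since $f$ has degree at most $d$ and $0<\sqrt{p-1}\le 1$, every set $S$ with $\widehat{f}(S)\ne 0$ satisfies $|S|\le d$, so $(p-1)^{|S|}\ge (p-1)^{d}$ and the left-hand side is at least $(p-1)^{d/2}\|f\|_2$ by Parseval. Hence $(p-1)^{d/2}\|f\|_2\le\|f\|_p$, which is exactly $\|f\|_2\le\rho_p^{d}\|f\|_p$ with $\rho_p=(p-1)^{-1/2}$.

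For $p=1$ I would bootstrap from the case just established by interpolating the intermediate norm. By log-convexity of the $L^q$-norms (Hölder), for $q\in(1,2)$ one has $\|f\|_q\le\|f\|_1^{\,1-\theta}\|f\|_2^{\,\theta}$ with $\theta$ determined by $\frac1q=(1-\theta)+\frac\theta2$, i.e.\ $1-\theta=\frac{2-q}{q}$. Combining this with $\|f\|_2\le(q-1)^{-d/2}\|f\|_q$ and cancelling the common factor $\|f\|_2^{\theta}$ (we may assume $f\not\equiv 0$, and $\|f\|_2<\infty$ trivially) gives
\begin{equation*}
	\|f\|_2\le (q-1)^{-\frac{dq}{2(2-q)}}\,\|f\|_1,\qquad q\in(1,2).
\end{equation*}
It then remains to optimise the constant over $q$: writing $(q-1)^{-\frac{dq}{2(2-q)}}=\exp\bigl(\tfrac{dq}{2(2-q)}\log\tfrac1{q-1}\bigr)$ and letting $q\to 2^-$ (with $q=2-t$ one has $\tfrac{q}{2(2-q)}\log\tfrac1{q-1}=\tfrac{2-t}{2t}\bigl(-\log(1-t)\bigr)\to 1$), the constant tends to $e^{d}$, so $\|f\|_2\le e^{d}\|f\|_1=\rho_1^{d}\|f\|_1$.

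Granting hypercontractivity, the argument is routine; the only delicate point is the $p=1$ endpoint, where one must pass to the limit $q\to 2$ — rather than fix a single $q$ — in order to recover exactly the constant $e$. Equivalently, one may simply cite the $(1,2)$-hypercontractive bound $\|f\|_2\le e^{d}\|f\|_1$ for functions of degree at most $d$, which is itself proved by precisely this limiting procedure.
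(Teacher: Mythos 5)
Your proof is correct and takes essentially the same route as the paper, whose proof simply invokes hypercontractivity (stated in heat-semigroup form, $\|P_tf\|_q\le\|f\|_p$ for $t\ge\tfrac12\log\tfrac{q-1}{p-1}$) and defers the standard derivation to O'Donnell's book. You have just written that standard derivation out in full --- applying $(p,2)$-hypercontractivity plus the degree-$d$ spectral bound for $1<p<2$, and handling the $p=1$ endpoint by interpolation and the limit $q\to2^-$, exactly the details the paper leaves to the reference.
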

		
		\begin{proof}
			This is a standard consequence of hypercontractivity: for $1<p<q<\infty$
			\begin{equation}
				\|P_t f\|_q\le \|f\|_p, \qquad t\ge \frac{1}{2}\log\frac{q-1}{p-1}
			\end{equation}
			where $P_t$ is the heat semigroup on discrete hypercubes given by 
			$$P_t\left( 
			\sum_{S\subset [n]}\widehat{f}(S)\chi_S
			\right)=\sum_{S\subset [n]}e^{-t|S|}\widehat{f}(S)\chi_S.
			$$
			See \cite{odonnell14book} for more discussions. 
		\end{proof}
		
		The following \emph{polarization} result suffices the proof of Boolean Bohnenblust--Hille inequality for homogeneous polynomials under consideration. For the proof for general low-degree polynomials, one needs a variant that can be found in \cite{dmp19}.  Recall that to any homogeneous polynomial  $P:\R^n\to \R$ of degree $d$ 
		\begin{equation}
			P(x_1,\dots, x_n)=\sum_{1\le i_1<\cdots<i_d\le n} a_{i_1,\dots, i_d} x_{i_1}\cdots x_{i_d}
		\end{equation}
		there associates a unique $d$-linear symmetric form $L=L_P:(\R^n)^d\to \R$ such that 
		\begin{equation}
			L_P(x,\dots, x)=P(x),\qquad x\in \R^n.
		\end{equation}
		In fact, $L_P$ has an explicit form
		\begin{equation}
			L_P(x^{(1)},\dots, x^{(d)})=\sum_{j_1,\dots, j_d} c_{j_1,\dots, j_d}x^{(1)}_{j_1}\cdots x^{(d)}_{j_d},
		\end{equation}
		where 
		\begin{equation}
			c_{j_1,\dots, j_d}=
			\begin{cases}
				\frac{a_{i_1,\dots, i_d}}{d!} & \textnormal{if } \{j_1,\dots, j_d\}=\{i_1,\dots, i_d\}\\
				0&\textnormal{otherwise}
			\end{cases}.
		\end{equation}

		\begin{lemma}[Polarization]\label{lem:polarization}
			Let $P:\R^n\to \R$ be a homogeneous polynomial of degree $d$, and let $L=L_P:(\R^n)^d\to \R$ be associated $d$-linear symmetric form. Then for all $1\le k\le d$ and all $x,y\in [-1,1]^n$
			\begin{equation}\label{ineq:polarization}
				|L(\underbrace{x,\dots,x}_{k},\underbrace{y,\dots, y}_{d-k})|
				\le \frac{(1+\sqrt{2})^d d^d}{k^k (d-k)^{d-k}}\frac{k!(d-k)!}{d!}\|P\|_{[-1,1]^n}.
			\end{equation}
		\end{lemma}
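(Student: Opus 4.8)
The plan is to reduce this multilinear bound to a one–variable extremal problem solved by a sharp Bernstein–Walsh type estimate. I would introduce $R(t):=P(tx+y)$, $t\in\C$, and expand $P(tx+y)=L(tx+y,\dots,tx+y)$ by multilinearity: a term placing $tx$ in a set $A\subseteq[d]$ of slots and $y$ elsewhere contributes $t^{|A|}L(\underbrace{x,\dots,x}_{|A|},\underbrace{y,\dots,y}_{d-|A|})$, so by symmetry of $L$
\[
R(t)=\sum_{j=0}^{d}\binom{d}{j}\,t^{j}\,L(\underbrace{x,\dots,x}_{j},\underbrace{y,\dots,y}_{d-j}).
\]
Thus $L(\underbrace{x,\dots,x}_{k},\underbrace{y,\dots,y}_{d-k})$ is $\binom{d}{k}^{-1}$ times the coefficient of $t^{k}$ in $R$, which I will extract by a Cauchy integral over a circle $|t|=a$, optimizing $a>0$ at the end.

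The next step is to bound $R$ on such circles. Homogeneity of $P$ gives $|P(w)|\le\|w\|_{\infty}^{d}\|P\|_{[-1,1]^{n}}$ for all real $w$ (apply it to $w/\|w\|_{\infty}\in[-1,1]^{n}$), so with $w=tx+y$ and $x,y\in[-1,1]^{n}$ we get $|R(t)|\le(1+|t|)^{d}\|P\|_{[-1,1]^{n}}$ for real $t$, hence $\|R\|_{[-a,a]}\le(1+a)^{d}\|P\|_{[-1,1]^{n}}$. To pass to the complex disc $\{|t|\le a\}$ I would invoke the key one–variable fact: every polynomial $h$ of one variable of degree $\le d$ satisfies $\|h\|_{\overline{\D}}\le(1+\sqrt2)^{d}\|h\|_{[-1,1]}$; rescaling $t\mapsto at$ then yields $|R(t)|\le(1+\sqrt2)^{d}(1+a)^{d}\|P\|_{[-1,1]^{n}}$ for complex $|t|\le a$.

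I would prove this one–variable estimate by the substitution $z=\tfrac12(w+w^{-1})$: the function $G(w):=w^{d}h\big(\tfrac12(w+w^{-1})\big)$ is a polynomial in $w$ of degree $\le 2d$ with $|G(w)|\le\|h\|_{[-1,1]}$ on $|w|=1$ (there $\tfrac12(w+w^{-1})=\operatorname{Re}w$), so the maximum principle on $\overline{\D}$ gives $|h(z)|\le|\Phi(z)|^{d}\|h\|_{[-1,1]}$, where $\Phi(z)$ is the root of $W^{2}-2zW+1=0$ with $|\Phi(z)|\ge1$. Writing $\Phi(z)=\rho e^{i\psi}$ one has $z=\tfrac{\rho+\rho^{-1}}{2}\cos\psi+i\,\tfrac{\rho-\rho^{-1}}{2}\sin\psi$, whence $|z|^{2}\ge\big(\tfrac{\rho-\rho^{-1}}{2}\big)^{2}$; so $z\in\overline{\D}$ forces $\rho\le1+\sqrt2$, with equality at $z=\pm i$. (Geometrically: $\overline{\D}$ lies inside the Bernstein ellipse whose minor semi-axis equals $1$.)

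Finally, the Cauchy estimate $\binom{d}{k}|L(\underbrace{x,\dots,x}_{k},\underbrace{y,\dots,y}_{d-k})|\le a^{-k}\sup_{|t|=a}|R(t)|$ combined with the bound above gives
\[
\binom{d}{k}\,\big|L(\underbrace{x,\dots,x}_{k},\underbrace{y,\dots,y}_{d-k})\big|\le\frac{(1+\sqrt2)^{d}(1+a)^{d}}{a^{k}}\,\|P\|_{[-1,1]^{n}},
\]
and the choice $a=k/(d-k)$ makes $(1+a)^{d}a^{-k}=d^{d}k^{-k}(d-k)^{-(d-k)}$; dividing by $\binom{d}{k}$ yields exactly \eqref{ineq:polarization} (the endpoint $k=d$ being the trivial estimate $|P(x)|\le\|P\|_{[-1,1]^{n}}$). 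The one step I expect to require real care is the sharp disc-versus-interval constant $(1+\sqrt2)^{d}$ — i.e. verifying that $\overline{\D}$ sits inside the right Bernstein ellipse; the rest is homogeneity, the multinomial expansion, and a one-line optimization in $a$.
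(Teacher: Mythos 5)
Your proposal is correct and is essentially the paper's own argument: both expand the one-variable polynomial $t\mapsto P(tx+y)$ (the paper uses $P(\lambda tx+y)$), identify $\binom{d}{k}L(x,\dots,x,y,\dots,y)$ with its $t^k$-coefficient, bound the polynomial on an interval by $(1+|t|)^d\|P\|_{[-1,1]^n}$ via homogeneity, and optimize the same scaling $\lambda=a=k/(d-k)$. The only difference is that where the paper invokes Markov's coefficient inequality $|a_k|\le(1+\sqrt{2})^d\|p\|_{[-1,1]}$ as a quoted fact from \cite{be95polynomial,dmp19}, you derive the equivalent bound yourself through the Joukowski-map (Bernstein--Walsh) comparison between the disc and $[-1,1]$ together with a Cauchy estimate on the circle $|t|=a$, which is exactly how that cited constant is obtained anyway.
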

		
		\begin{proof}
			The proof follows from a classical inequality of Markov \cite[page 248]{be95polynomial}: For a real polynomial $p(t)=\sum_{m=0}^{d}a_m t^m$ of degree at most $d$ one has 
			\begin{equation}
				|a_m|\le M_{m,d}\|p\|_{[-1,1]},\qquad 0\le m\le d.
			\end{equation}
			Here $M_{,m,d}$ is the Markov's number with explicit expression and has the estimate $M_{m,d}\le (1+\sqrt{2})^d$ \cite{dmp19}. Then the proof is finished by considering 
			$$
			p(t):=L(\lambda tx+y,\lambda tx+y,\dots, \lambda tx+y)=P(\lambda tx+y),
			$$
			with $\lambda>0$ to be chosen later. In fact, writing $p(t)=\sum_{m=0}^{d}a_m t^m$, the left-hand side of \eqref{ineq:polarization} is exactly $\lambda^{-k}\binom{d}{k}^{-1}|a_k|$, while Markov's inequality gives 
			$$
			|a_k|\le (1+\sqrt{2})^d \|p\|_{[-1,1]}\le  (1+\sqrt{2})^d \|P\|_{[-(\lambda +1),\lambda +1]^n}\le (\lambda+1)^d(1+\sqrt{2})^d\|P\|_{[-1,1]^n}.
			$$
			Then the proof is finished by choosing $\lambda=\frac{k}{d-k}$.
		\end{proof}
		
		For $d,n\ge 1$, we consider the subset $\cj(d,n)$ of $[n]^d$ given by 
		$$\cj(d,n):=\left\{\bj \in [n]^d: 1\le j_1<\dots <j_d\le n\right\}.$$
		For any $\bi\in [n]^d$ and $x\in \{-1,1\}^n$ we write 
		\begin{equation}
			x_{\bi}=\prod_{k=1}^{d}x_{\bi_k}.
		\end{equation}
		Then any $f=\sum_{|S|=d}\widehat{f}(S)\chi_S$ can be written as 
		\begin{equation}
			f(x)=\sum_{\bj\in\cj(d,n)}a_{\bj}x_{\bj}
		\end{equation}
		with $\|\widehat{f}\|_p=\|(a_{\bj})_{\bj}\|_p$.  Now we are ready to present the four-step argument to prove 
		\begin{equation}\label{ineq:homo bh}
			\|\widehat{f}\|_{\frac{2d}{d+1}}\le \bh^{=d}_{\{\pm1\}}\|f\|_{\{\pm1\}^n},\qquad f=\sum_{|S|=d}\widehat{f}(S)\chi_S.
		\end{equation}
		
		\textbf{Step 1} By Lemma \ref{lem:blei}
		\begin{equation}
			\|\widehat{f}\|_{\frac{2d}{d+1}}=\left(\sum_{\bj\in \cj(d,n)}|a_\bj|^{\frac{2d}{d+1}}\right)^{\frac{d+1}{2d}}
			\le \left[\prod_{S\subset [d], |S|=k} 
			\left(\sum_{\bj_S}
			\left(\sum_{\bj_{S^c}}
			|a_{\bj}|^2
			\right)^{\frac{1}{2}\frac{2k}{k+1}}
			\right)^{\frac{k+1}{2k}}
			\right]^{\binom{d}{k}^{-1}}.
		\end{equation}
		
		\textbf{Step 2} We fix $S\subset [d]$ with $|S|=k$. By Parseval's identity and Lemma \ref{lem:moment comparison}
		\begin{equation}
			\left(\sum_{\bj_{S^c}}
			|a_{\bj}|^2
			\right)^{\frac{1}{2}\frac{2k}{k+1}}
			\le \rho_{\frac{2k}{k+1}}^{d-k}\E_{y}\left| \sum_{\bj_{S^c}} a_{\bj} y_{\bj_{S^c}}\right|^{\frac{2k}{k+1}}.
		\end{equation}
		Thus
		\begin{align}
			\sum_{\bj_S}
			\left(\sum_{\bj_{S^c}}
			|a_{\bj}|^2
			\right)^{\frac{1}{2}\frac{2k}{k+1}}
			\le \rho_{\frac{2k}{k+1}}^{d-k}\E_{y} \sum_{\bj_S}\left| \sum_{\bj_{S^c}} a_{\bj} y_{\bj_{S^c}}\right|^{\frac{2k}{k+1}}
			\le \rho_{\frac{2k}{k+1}}^{d-k}\sup_{y\in \{-1,1\}^n} \sum_{\bj_S}\left| \sum_{\bj_{S^c}} a_{\bj} y_{\bj_{S^c}}\right|^{\frac{2k}{k+1}}.
		\end{align}
		
		\textbf{Step 3} Applying the Boolean Bohnenblust--Hille inequality \eqref{ineq:homo bh} for homogeneous polynomials of degree $k$, we obtain
		\begin{equation}
			\sum_{\bj_S}\left| \sum_{\bj_{S^c}} a_{\bj} y_{\bj_{S^c}}\right|^{\frac{2k}{k+1}}
			\le \left(\bh^{=k}_{\{\pm1\}}\right)^{\frac{2k}{k+1}}\sup_{x\in \{-1,1\}^n} \left|\sum_{\bj_S} \sum_{\bj_{S^c}} a_{\bj} x_{\bj_S} y_{\bj_{S^c}}\right|^{\frac{2k}{k+1}}.
		\end{equation}
		Therefore, 
		\begin{align}
			\left(\sum_{\bj_S}
			\left(\sum_{\bj_{S^c}}
			|a_{\bj}|^2
			\right)^{\frac{1}{2}\frac{2k}{k+1}}
			\right)^{\frac{k+1}{2k}}
			&\le \rho_{\frac{2k}{k+1}}^{d-k} \bh^{=k}_{\{\pm1\}}\sup_{x,y\in \{-1,1\}^n} \left|\sum_{\bj_S} \sum_{\bj_{S^c}} a_{\bj} x_{\bj_S} y_{\bj_{S^c}}\right|.
		\end{align}
		
		\textbf{Step 4} Let us extend the definition of $a_{\bj}$ to $\bi\in[n]^d$:
		\begin{equation}
			a_{\bi}=
			\begin{cases}
				a_{\bj}& \textnormal{ if } \{\bi_1,\dots, \bi_d\}=\{\bj_1<\dots<\bj_d\} \textnormal{ for some }\bj\in \cj(d,n)\\
				0 & \textnormal{ otherwise }
			\end{cases}.
		\end{equation}
		So 
		\begin{equation}
			\sum_{\bj_S} \sum_{\bj_{S^c}} a_{\bj} x_{\bj_S} y_{\bj_{S^c}}
			=	\frac{1}{k!(d-k)!}\sum_{\bi_S} \sum_{\bi_{S^c}} a_{\bi} x_{\bi_S} y_{\bi_{S^c}}
			=	\frac{d!}{k!(d-k)!}L_f(\underbrace{x,\dots,x}_{k},\underbrace{y,\dots, y}_{d-k})
		\end{equation}
		where $L_f$ is the $d$-linear symmetric form associated to $f$. By polarization Lemma \ref{lem:polarization}, we have
		\begin{equation}
			|L_f(\underbrace{x,\dots,x}_{k},\underbrace{y,\dots, y}_{d-k})|
			\le \frac{(1+\sqrt{2})^d d^d}{k^k (d-k)^{d-k}}\frac{k!(d-k)!}{d!}\|f\|_{\{\pm 1 \}^n}.
		\end{equation}
		All combined, we showed that 
		\begin{equation}
			\left(\sum_{\bj\in \cj(d,n)}|a_\bj|^{\frac{2d}{d+1}}\right)^{\frac{d+1}{2d}}
			\le \rho_{\frac{2k}{k+1}}^{d-k} \bh^{=k}_{\{\pm1\}}\frac{(1+\sqrt{2})^d d^d}{k^k (d-k)^{d-k}}\|f\|_{\{\pm 1 \}^n}
		\end{equation}
		which entails the desired inductive inequality
		\begin{equation}
			\bh^{=d}_{\{\pm1\}}
			\le =\left(\frac{k+1}{k-1}\right)^{\frac{d-k}{2}}\frac{(1+\sqrt{2})^d d^d}{k^k (d-k)^{d-k}}\bh^{=k}_{\{\pm1\}}
		\end{equation}
		in view of $\rho_p^2=\frac{1}{p-1}$ when $1<p\le 2.$

		\chapter{Bohnenblust--Hille inequality on quantum systems via a reduction method} 
		
		\section{Qubit Bohnenblust--Hille inequality}
		
		
		We use $M_2(\C)$ to denote the 2-by-2 complex matrix algebra, and $M_2(\C)^{\otimes n}\simeq M_{2^n}(\C)$ its $n$-fold tensor product. The Pauli matrices
		
		\begin{equation*}
			\sigma_0=\begin{pmatrix}1&0\\0&1\end{pmatrix},\quad \sigma_1=\begin{pmatrix}0&1\\1&0\end{pmatrix},\quad \sigma_2=\begin{pmatrix}0&-i\\i&0\end{pmatrix},\quad \sigma_3=\begin{pmatrix}1&0\\0&-1\end{pmatrix},
		\end{equation*}
		form a basis of $M_2(\C)$. For $\bs=(s_1,\dots, s_n)\in\{0,1,2,3\}^n$, we put
		\begin{equation*}
			\sigma_\bs:=\sigma_{s_1}\otimes\dots\otimes \sigma_{s_n}\, .
		\end{equation*}
		All the $\sigma_{\bs}, \bs\in \{0,1,2,3\}^n$ form a basis of $M_2(\C)^{\otimes n}$ and play the role of characters $\chi_S,S\subset [n]$ on $\{-1,1\}^n$. Any $A\in M_2(\C)^{\otimes n}$ has the unique Fourier expansion
		\begin{equation*}
			A=\sum_{\bs\in\{0,1,2,3\}^n} \widehat{A}_\bs \,\sigma_\bs
		\end{equation*}
		with  $\widehat{A}_\bs\in \C$ being the Fourier coefficient. For any  $\bs=(s_1,\dots, s_n)\in\{0,1,2,3\}^n$, we denote by $|\bs|$ the number of non-zero $s_j$'s. Similar to the classical setting, $A\in M_2(\C)^{\otimes n}$ is {\em of degree at most $d$}   if $\widehat{A}_\bs =0$ whenever $|\bs|>d$, and it is {\em homogeneous of degree $d$} if $\widehat{A}_\bs=0$ whenever $|\bs|\neq d$. 
		
		Recall that $\sigma_j, 1\le j\le 3$ satisfy the anti-commutation relation:
		\begin{equation}\label{eq:pauli commutation}
			\sigma_j\sigma_k+\sigma_k\sigma_j=2\delta_{jk}\un,\qquad 1\le j,k\le 3
		\end{equation}
		which will play a key role in the following. Here and in what follows, $\un$ denotes the identity matrix. 
		
		
		Learning quantum observables $A\in M_2(\C)^{\otimes n}$ has been quite popular in recent years. We are not going to survey the progress this direction in any sense. But the Fourier analysis tools in the qubit systems can be as useful as they are in the classical case (Boolean cubes). Of particular interest is the following Bohnenblust--Hille inequality for the qubit system which is a natural question. Here and in what follows, we use $\|A\|$ to denote the operator norm of a matrix $A$.
		
		\begin{theorem}[Qubit Bohnenblust--Hille]\label{thm:qubit bh}
			Fix $d\ge 1$. There exists $C(d)>0$ such that for all $n\ge 1$ and all $A\in M_2(\C)^{\otimes n}$ of degree at most $d$, we have 
			\begin{equation}\label{ineq:qubit bh}
				\|\widehat{A}\|_{\frac{2d}{d+1}}\le C(d)\|A\|.
			\end{equation}
		\end{theorem}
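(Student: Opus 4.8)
The plan is to \emph{reduce} Theorem~\ref{thm:qubit bh} to the Boolean Bohnenblust--Hille inequality \eqref{ineq:boolean bh}, by pairing $A$ against product states so as to turn its non-commutative Fourier expansion into an honest multilinear polynomial in $3n$ commuting variables. In contrast to the classical proof, no prior reduction to homogeneous $A$ will be needed.

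\textbf{Step 1: from the qubit algebra to commuting variables.} For $r=(r^{(1)},\dots,r^{(n)})$ with each $r^{(j)}=(r^{(j)}_1,r^{(j)}_2,r^{(j)}_3)\in\R^3$ and $|r^{(j)}|\le 1$, the matrix $\rho_{r^{(j)}}:=\tfrac12\bigl(\un+r^{(j)}_1\sigma_1+r^{(j)}_2\sigma_2+r^{(j)}_3\sigma_3\bigr)$ has eigenvalues $\tfrac{1\pm|r^{(j)}|}{2}\ge 0$ and trace $1$, so $\rho_r:=\rho_{r^{(1)}}\otimes\cdots\otimes\rho_{r^{(n)}}$ is a state on $M_2(\C)^{\otimes n}$. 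Since $\tr(\rho_{r^{(j)}}\sigma_0)=1$ and $\tr(\rho_{r^{(j)}}\sigma_a)=r^{(j)}_a$ for $a\in\{1,2,3\}$, for any $A=\sum_{\bs}\widehat A_\bs\sigma_\bs$ of degree at most $d$ one gets
\[
 g(r):=\tr(\rho_r A)=\sum_{\bs:\,|\bs|\le d}\widehat A_\bs\prod_{j:\,s_j\ne 0}r^{(j)}_{s_j}.
\]
Indexing the $3n$ scalar variables by pairs $(j,a)\in[n]\times\{1,2,3\}$, the monomial attached to $\bs$ is the Walsh character of $\{(j,s_j):s_j\ne 0\}$, a set of size $|\bs|\le d$, and distinct $\bs$ yield distinct such sets. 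Hence the restriction of $g$ to $\{-1,1\}^{3n}$ is a function of degree at most $d$ whose Fourier--Walsh coefficients are exactly the numbers $\widehat A_\bs$, so $\|\widehat{g}\|_{q}=\|\widehat A\|_{q}$ for every $q$. Moreover, since $\rho_r$ is a state, $|g(r)|=|\tr(\rho_r A)|\le\|A\|$ for every admissible $r$.

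\textbf{Step 2: from the Bloch ball to the cube, and conclusion.} Because $\tfrac{1}{\sqrt3}[-1,1]^3$ lies in the Euclidean unit ball of $\R^3$, Step~1 gives $|g(r)|\le\|A\|$ whenever every block $r^{(j)}\in\tfrac{1}{\sqrt3}[-1,1]^3$. Fixing $\eps\in\{-1,1\}^{3n}$ and applying the extremal property of Chebyshev polynomials to the degree-$\le d$ univariate polynomial $t\mapsto g(t\eps)$, which is bounded by $\|A\|$ on $[-1/\sqrt3,1/\sqrt3]$ (there all blocks of $t\eps$ lie in $\tfrac{1}{\sqrt3}[-1,1]^3$), we obtain $|g(\eps)|\le T_d(\sqrt3)\,\|A\|\le(\sqrt2+\sqrt3)^d\|A\|$, where $T_d$ is the Chebyshev polynomial of degree $d$. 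Thus $\|g\|_{\{-1,1\}^{3n}}\le(\sqrt2+\sqrt3)^d\|A\|$. Applying the Boolean Bohnenblust--Hille inequality \eqref{ineq:boolean bh} on $\{-1,1\}^{3n}$ — in its complex-valued form, which follows from the real one by separating real and imaginary parts at the cost of a factor $2$ — and using $\|\widehat A\|_{\frac{2d}{d+1}}=\|\widehat g\|_{\frac{2d}{d+1}}$, we conclude that \eqref{ineq:qubit bh} holds with $C(d)=2\,(\sqrt2+\sqrt3)^d\,\bh^{\le d}_{\{\pm1\}}$.

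\textbf{Main difficulty.} Once the intermediate polynomial $g$ is identified, the argument is little more than bookkeeping; the only genuinely quantitative point is Step~2, the passage from the Bloch ball to the cube. This is exactly where the non-commutativity of the Pauli matrices enters: a single-qubit state has Bloch vector of \emph{Euclidean} norm $\le 1$, not sup-norm $\le 1$, and closing this gap costs the factor $(\sqrt2+\sqrt3)^d$. This is harmless for the existence statement of Theorem~\ref{thm:qubit bh}, but it also means the reduction cannot on its own produce a sub-exponential constant comparable to $\bh^{\le d}_{\{\pm1\}}$; obtaining such a bound would require a direct four-step proof in the spirit of the one for \eqref{ineq:boolean bh}, with fermionic hypercontractivity and a non-commutative Khintchin inequality replacing Lemma~\ref{lem:moment comparison} and the Steinhaus/Rademacher step.
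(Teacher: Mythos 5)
Your proof is correct, and it implements the same reduction philosophy as the paper: pair $A$ with product density matrices indexed by a cube of dimension $3n$, so that $\bs\mapsto\widehat A_\bs$ becomes (up to controlled factors) the Walsh spectrum of a bounded classical function of degree at most $d$, then invoke the Boolean Bohnenblust--Hille inequality \eqref{ineq:boolean bh}. The concrete mechanism differs, though. The paper takes each single-qubit factor to be the uniform mixture $\frac13\sum_{\kappa=1}^{3}\ketbra{e^{\kappa}_{\epsilon}}{e^{\kappa}_{\epsilon}}$ of eigenprojections of $\sigma_1,\sigma_2,\sigma_3$ at signs $\epsilon=\pm1$; the resulting $f_A$ is then bounded by $\|A\|$ directly on $\{-1,1\}^{3n}$ by duality, at the price that the Fourier coefficients come out damped by $3^{-|\bs|}$, which is the source of its $3^{d}$ loss. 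You instead use Bloch states $\tfrac12(\un+r\cdot\sigma)$, which keep the coefficients $\widehat A_\bs$ undamped but are genuine states only when each block has Euclidean norm at most $1$; since $\{-1,1\}^3$ lies outside the Bloch ball, you must pay for the passage from the scaled cube $\tfrac{1}{\sqrt3}[-1,1]^{3n}$ to $\{-1,1\}^{3n}$, which you do correctly via the Chebyshev growth estimate along rays, with $T_d(\sqrt3)\le(\sqrt2+\sqrt3)^d$. Both routes give exponential constants ($3^d$ versus $2(\sqrt2+\sqrt3)^d$); in fact your Step 1 could be closed slightly more cheaply by restricting $g$ to $\{\pm 1/\sqrt3\}^{3n}$, i.e.\ absorbing the scaling into the coefficients, which yields $3^{d/2}\,\bh^{\le d}_{\{\pm1\}}$ and avoids Chebyshev altogether. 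Two small points worth making explicit: the Chebyshev comparison is classically stated for real polynomials, so for the complex-coefficient polynomial $t\mapsto g(t\eps)$ you should apply it to $\Re\bigl(e^{i\theta}g(t\eps)\bigr)$ with $\theta$ chosen so that $e^{i\theta}g(\eps)\ge 0$ (this costs nothing); and your factor $2$ for the complex-valued form of \eqref{ineq:boolean bh} is the right fix --- the paper's own $f_A$ is also complex-valued and tacitly needs the same remark.
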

		
		This inequality \eqref{ineq:qubit bh} was conjectured by Rouz\'e, Wirth and Zhang \cite{rwz24kkl}, and was first proved by Huang, Chen and Preskill \cite{hcp23learning} in their study of learning arbitrary quantum progress. Their proof essentially follows the scheme presented in the last chapter and they achieved a constant of the order $d^{\cO(d)}$. Later on, Volberg and Zhang \cite{vz22} gave another proof that reduces the problem to the classical case, and they obtained a constant of exponential growth. This reduction method will be the main focus of this lecture and we shall explain it in the next section. 
		
		We will not discuss applications of \eqref{ineq:qubit bh}, such as learning quantum observables, which can be analogous to the classical case. But let us remark that in the work of Huang--Chen--Preskill, they actually try to learn arbitrary quantum observables via  a low-degree truncation, and then treat the low-degree and tail parts separately. 
		
		\section{A reduction method in the qubit system}
		
		The idea of reduction method in \cite{vz22} is very simple. To any  $A\in M_2(\C)^{\otimes n}$ of degree at most $d$, we aim to find a classical function $f_A:\{-1,1\}^m\to \C$ of the same degree such that 
		\begin{equation}
			\|\widehat{A}\|_{\frac{2d}{d+1}}\precsim \|\widehat{f_A}\|_{\frac{2d}{d+1}}\precsim \|f_A\|_{\infty}\precsim\|A\|,
		\end{equation}
		with dimension-free constants. 
		The inequality in the middle comes for ``free" since it is nothing but the classical Boolean Bohnenblust--Hille inequality \eqref{ineq:boolean bh}. The difficulty is to realize the first and last inequalities simultaneously with dimension-free constants. Note that $m$ does not have to be $n$ and actually we will choose $m=3n$.
		
		In the sequel, we use $\langle \cdot, \cdot\rangle$ to denote the inner product on $\C^n$ that is linear in the second argument. For a linear operator $A$, we use $A^\dagger$ to denote its adjoint with respect to $\langle \cdot, \cdot\rangle$. For a unit vector $\eta$ we shall use the convention that $\ket{\eta}\bra{\eta}$ denotes the corresponding rank-one projection. 
		
		We start with a simple fact. 
		
		\begin{lemma}\label{lem:qubit orthogonal}
			Suppose that two Hermitian matrices $A,B$ such that $AB=-BA$. If $\eta\neq \vec{0}$ is an eigenvector of $B$ with eigenvalue $\lambda\neq 0$. Then $\langle \eta, A\eta\rangle =0$.
		\end{lemma}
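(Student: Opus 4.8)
The plan is to exploit the anticommutation relation $AB=-BA$ together with the Hermiticity of $B$ to see that $A\eta$ lands in the eigenspace of $B$ attached to the eigenvalue $-\lambda$, which is forced to be orthogonal to $\eta$ once $\lambda\neq 0$. First I would record the elementary computation
\[
B(A\eta)=BA\eta=-AB\eta=-A(\lambda\eta)=-\lambda\,(A\eta),
\]
so that $A\eta$ is either the zero vector or a genuine eigenvector of $B$ with eigenvalue $-\lambda$. Since $B$ is Hermitian, eigenvectors corresponding to distinct eigenvalues are orthogonal, and here $\lambda\neq-\lambda$ because $\lambda\neq 0$; hence $\langle\eta,A\eta\rangle=0$ in either case.

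The slightly shorter route, which I would actually write down to avoid the case distinction, is the direct bilinear-form computation. Using $B=B^\dagger$ and the fact that $\lambda$ is real (being an eigenvalue of a Hermitian matrix, so that $\overline{\lambda}=\lambda$ when $B$ is moved across the inner product), one gets
\[
\lambda\,\langle\eta,A\eta\rangle=\langle B\eta,A\eta\rangle=\langle\eta,BA\eta\rangle=-\langle\eta,AB\eta\rangle=-\lambda\,\langle\eta,A\eta\rangle,
\]
whence $2\lambda\,\langle\eta,A\eta\rangle=0$, and dividing by $2\lambda\neq 0$ yields the claim.

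There is essentially no obstacle in this lemma: it is a one-line consequence of the anticommutation relation. The only points requiring a moment's care are the reality of $\lambda$ (which is exactly where Hermiticity of $B$ enters, both in pulling $B$ across the inner product and in concluding $\lambda\neq-\lambda$) and the hypothesis $\lambda\neq 0$, which is what makes $2\lambda$ invertible. Note that Hermiticity of $A$ is not even used in this argument; only $B=B^\dagger$ is needed, and the conclusion remains valid for arbitrary $A$ satisfying $AB=-BA$.
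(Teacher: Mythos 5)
Your proposal is correct, and the bilinear-form computation you say you would actually write down is essentially the paper's own proof: the same chain of equalities (read in the opposite direction), using $B=B^\dagger$, the reality of $\lambda$, and finally dividing by $2\lambda\neq 0$. Your eigenspace observation ($A\eta$ lies in the $(-\lambda)$-eigenspace of $B$) is a harmless repackaging of the same idea, and your remark that Hermiticity of $A$ is never used is accurate.
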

		
		\begin{proof}
			By definition, 
			\begin{equation}
				\lambda\langle \eta, A\eta\rangle 
				=\langle \eta, AB\eta\rangle
				=-\langle \eta, BA\eta\rangle
				=-\langle B\eta, A\eta\rangle
				=-\lambda\langle \eta, A\eta\rangle.
			\end{equation}
			So $2\lambda\langle \eta, A\eta\rangle=0$. Since $\lambda \neq 0$, we have $\langle \eta, A\eta\rangle =0$ as desired. 
		\end{proof}
		
		Now we prove Theorem \ref{thm:qubit bh} following the approach in  \cite{vz22}.
		
		\begin{proof}[Proof of Theorem \ref{thm:qubit bh}]
			Note first that each of $\sigma_j,1\le j\le 3$ has $\pm 1$ as eigenvalues. For any $\kappa\in \{1,2,3\}$ and $\e\in\{-1,1\}$, we denote by $e^\kappa_{\e}$ the unit eigenvector of $\sigma_\kappa$ corresponding to $\e$.
			For  any 
			$$\vec{\e}:=\left(\e^{(1)}_1,\dots, \e^{(1)}_n,\e^{(2)}_1,\dots, \e^{(2)}_n,\e^{(3)}_1,\dots, \e^{(3)}_n\right)\in \{-1,1\}^{3n},$$ 
			consider the matrix $\rho(\vec{\e})$ that is defined as follows
			\begin{equation*}
				\rho(\vec{\e}):=\rho_1(\vec{\e})\otimes \cdots \otimes \rho_n (\vec{\e})\in M_2(\C)^{\otimes n}, 
			\end{equation*}
			where for each $1\le j\le n$
			\begin{equation*}
				\rho_j(\vec{\e}):=\frac{1}{3}\ket{e^1_{\e^{(1)}_j}}\bra{e^1_{\e^{(1)}_j}}+\frac{1}{3}\ket{e^{2}_{\e^{(2)}_j}}\bra{e^2_{\e^{(2)}_j}}+\frac{1}{3}\ket{e^3_{\e^{(3)}_j}}\bra{e^3_{\e^{(3)}_j}}.
			\end{equation*}
			By definition,  each $\rho_j(\vec{\e})$ is a density matrix, that is, positive semi-definite with trace $1$. So $\rho(\vec{\e})$ is also a density matrix. For any $A\in M_2(\C)^{\otimes n}$, we define $f_A:\{-1,1\}^{3n}\to \C$ as
			\begin{equation}
				f_A(\vec{\e}):=\tr[A \rho(\vec{\e})].
			\end{equation}		
			Since $\rho(\vec{\e})$ is a density matrix, we have by duality
			\begin{equation}
				\|f_A\|_{\{\pm 1\}^{3n}}\le \|A\|.
			\end{equation}
			Now let us look at the Fourier expansion of $f_A$. For this we rewrite the Fourier expansion of $A$ in a different way. Consider the operator 
			\begin{equation}
				\sigma^{\kappa_1,\dots,  \kappa_l}_{i_1,\dots, i_l}:=\cdots\otimes  \sigma_{\kappa_1}\otimes \cdots \otimes \sigma_{\kappa_l}\otimes \cdots,
			\end{equation}
			where $\sigma_{\kappa_j}$ appears in the $i_j$-th place for each $1\le j\le l$, and all other $(n-l)$ components are simply the  identity matrices $\sigma_0=\un$. Then for $A\in M_2(\C)^{\otimes n}$ of degree at most $d$, its Fourier expansion also takes the form
			\begin{equation}
				A=\sum_{0\le l\le d}\sum_{\kappa_1,\dots, \kappa_l\in \{1,2,3\}}\sum_{1\le i_1<\cdots <i_l\le n}a^{\kappa_1,\dots,  \kappa_l}_{i_1,\dots, i_l}\sigma^{\kappa_1,\dots,  \kappa_l}_{i_1,\dots, i_l}.
			\end{equation}
			From Lemma \ref{lem:qubit orthogonal} and \eqref{eq:pauli commutation} one deduces
			\begin{equation}
				\tr[\sigma_i \rho_j(\vec{\e})]=\frac{1}{3}\e^{(i)}_{j},\qquad 1\le i\le 3,\quad 1\le j\le n.
			\end{equation}
			Since $\rho_j(\vec{\e})$ is a density matrix, we have $\tr[\sigma_0 \rho_j(\vec{\e})]=1$. Therefore
			\begin{equation}
				\tr[\sigma^{\kappa_1,\dots,  \kappa_l}_{i_1,\dots, i_l}\rho(\vec{\e})]
				=\tr[\sigma_{\kappa_1}\rho_{i_1}]\cdots \tr[\sigma_{\kappa_l}\rho_{i_l}]
				=\frac{1}{3^l}\epsilon^{(\kappa_1)}_{i_1}\cdots\epsilon^{(\kappa_l)}_{i_l}
			\end{equation}
			and 
			\begin{equation}
				f_A(\vec{\e})
				=\sum_{0\le l\le d}\sum_{\kappa_1,\dots, \kappa_l\in \{1,2,3\}}\sum_{1\le i_1<\cdots <i_l\le n}3^{-l}a^{\kappa_1,\dots,  \kappa_l}_{i_1,\dots, i_l}\epsilon^{(\kappa_1)}_{i_1}\cdots\epsilon^{(\kappa_l)}_{i_l}
			\end{equation}
			which is exactly the Fourier expansion of $f_A$ since the multi-linear monomials $\epsilon^{(\kappa_1)}_{i_1}\cdots\epsilon^{(\kappa_l)}_{i_l}$ differ for distinct $(l;\kappa_1,\dots, \kappa_l;i_1,\dots, i_l)$'s. From this we deduce that for all $p>0$
			\begin{equation}
				\|\widehat{f_A}\|_p
				=\left(\sum_{0\le l\le d}\sum_{\kappa_1,\dots, \kappa_l}\sum_{ i_1<\cdots <i_l} 3^{-pl}|a^{\kappa_1,\dots,  \kappa_l}_{i_1,\dots, i_l}|^{p}\right)^{1/p}
				\ge 3^{-d}\left(\sum_{0\le l\le d}\sum_{\kappa_1,\dots, \kappa_l}\sum_{ i_1<\cdots <i_l} |a^{\kappa_1,\dots,  \kappa_l}_{i_1,\dots, i_l}|^{p}\right)^{1/p}
				= 3^{-d}\|\widehat{A}\|_p.
			\end{equation}
			It is clear that $\deg (f_A)=\deg(A)$, so by Boolean Bohnenblust--Hille inequality \eqref{ineq:boolean bh}
			\begin{equation}
				\|\widehat{f_A}\|_{\frac{2d}{d+1}}\le \bh^{\le d}_{\{\pm1\}}\|f_A\|_{\{\pm1\}^{3n}}.
			\end{equation}
			All combined, we obtain
			\begin{equation}
				\|\widehat{A}\|_{\frac{2d}{d+1}}
				\le 3^d \|\widehat{f_A}\|_{\frac{2d}{d+1}}
				\le 3^d \bh^{\le d}_{\{\pm1\}}\|f_A\|_{\{\pm1\}^{3n}}
				\le 3^d \bh^{\le d}_{\{\pm1\}}\|A\|,
			\end{equation}
			which concludes the proof. 
		\end{proof}

		This reduction method also works for many other Fourier analysis problems in the qubit systems \cite{vz22}.

		\section{Qudit Bohnenblust--Hille inequality and the reduction}
		
		
		How about the \emph{qudit} systems? In this context, 2-by-2 matrices are replaced by $\go$-by-$\go$ matrices for $\go \ge 3$. Certain generalizations of Pauli matrices are needed to form a basis of the $\go\times \go$ complex matrix algebra $M_{\go}(\C)$. Here we consider the \emph{Heisenberg--Weyl basis} and we refer to \cite{svz24qudit} and references therein for other possible proposals. 
		
		Fix $\go\geq 3$ and we use $\Z_\go=\{0,1,\dots, \go-1\}$ to denote the additive group of order $\go$. In the following, the operations in $\Z_\go$ are always understood as $\mod \go$ unless otherwise stated. For example, $(\ket{j})_{j\in\Z_\go}$ is the canonical basis of $\C^\go$ and $\ket{\go+j}=\ket{j}$. Let $\omega =\omega_\go= \exp(2\pi i/\go)$ and denote by $\Omega_\go=\{1,\omega,\dots, \omega^{\go-1}\}$ the multiplicative group of order $\go$. 
		
		\begin{definition}
			Define the $\go$-dimensional \emph{clock} and \emph{shift} matrices respectively via
			\begin{equation}
				Z\ket{j} = \om^j \ket{j},\qquad 
				X \ket{j}= \ket{j+1}\qquad \textnormal{ for all} \qquad j\in \mathbb{Z}_\go.
			\end{equation}
			Note that $X^\go=Z^\go=\un$. 
			Then the Heisenberg--Weyl basis for $M_\go(\C)$ is
			\[
			\textnormal{HW}(\go):=\{X^\ell Z^m\}_{\ell,m\in \Z_\go}\,.
			\]
		\end{definition}
		It is a simple exercise to verify that $\{X^\ell Z^m\}_{\ell,m\in \Z_\go}$ form a basis of $M_\go(\C)$.

		Any observable $A\in M_\go(\C)^{\otimes n}$ has a unique Fourier expansion with respect to $\textnormal{HW}(\go)$:
		\begin{equation}
			\label{expHW}
			A=\sum_{\vec{\ell},\vec{m}\in \mathbb{Z}_\go^n}\widehat{A}(\vec{\ell},\vec{m})X^{\ell_1}Z^{m_1}\otimes \cdots \otimes X^{\ell_n}Z^{m_n},
		\end{equation}
		where $\widehat{A}(\vec{\ell},\vec{m})\in\C$ is the Fourier coefficient at $(\vec{\ell},\vec{m})$ with 
		$$
		\vec{\ell}=(\ell_1,\dots, \ell_n),\qquad \vec{m}=(m_1,\dots, m_n).
		$$
		
		We say that $A$ is \emph{of degree at most $d$} if $\widehat{A}(\vec{\ell},\vec{m})=0$ whenever 
		\begin{equation*}
			|(\vec{\ell},\vec{m})|:=\sum_{j=1}^{n}(\ell_j+m_j)>d.
		\end{equation*}
		Here, $0\le \ell_j,m_j\le \go-1$.

		Here are some facts about the Heisenberg--Weyl basis. We use the convention that for $g$ an element in a group $G$, $\langle g\rangle$ denotes the abelian subgroup generated by $g$. Recall that $\gcd(a,b)$ denotes the greatest common divisor of two positive integers $a$ and $b$. 
		
		\begin{lemma}\label{lem:HW basis general}
			Under the above notations, we have the following properties. 
			\begin{enumerate}
				\item For all $k,\ell,m\in \mathbb{Z}_\go$:
				\begin{equation*}
					(X^\ell Z^m)^k=\om^{\frac{1}{2}k(k-1)\ell m}X^{k\ell} Z^{km}
				\end{equation*}
				and for all $\ell_1,\ell_2,m_1,m_2\in\mathbb{Z}_\go$:
				\begin{equation*}
					X^{\ell_1}Z^{m_1}\cdot X^{\ell_2}Z^{m_2}=\om^{\ell_2 m_1-\ell_1 m_2} X^{\ell_2}Z^{m_2}\cdot X^{\ell_1}Z^{m_1}.
				\end{equation*}
				\item If $\ell_1,m_1\in \{1,2,\dots, \go\}$ are such that $\gcd(\ell_1,m_1)=1$, and $(\ell,m)\notin \langle(\ell_1,m_1)\rangle $, then
				\begin{equation}
					X^{\ell_1}Z^{m_1}\cdot X^{\ell}Z^{m}
					= \omega^{\ell m_1-\ell_1 m}X^{\ell}Z^{m}\cdot X^{\ell_1}Z^{m_1}
				\end{equation}
				with $\omega^{\ell m_1-\ell_1 m}\neq 1$.
				
				\item If $\gcd(\ell,m)=1$, then the set of eigenvalues of $X^\ell Z^m$ is either $\Omega_\go$ or $\Omega_{2\go}\setminus \Omega_{\go}$.
			\end{enumerate}
		\end{lemma}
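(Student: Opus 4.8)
All three parts flow from the single commutation relation $ZX=\om XZ$, equivalently $Z^{m}X^{\ell}=\om^{\ell m}X^{\ell}Z^{m}$, which one reads off the action on the canonical basis: $ZX\ket{j}=\om^{j+1}\ket{j+1}=\om XZ\ket{j}$. For part (1) I would expand $(X^{\ell}Z^{m})^{k}=X^{\ell}Z^{m}\cdots X^{\ell}Z^{m}$ and push each factor $Z^{m}$ to the right past all the $X^{\ell}$'s that follow it; counting from the left, the $r$-th block $Z^{m}$ crosses $k-r$ copies of $X^{\ell}$, each crossing producing a phase $\om^{\ell m}$, so the accumulated phase is $\om^{\ell m(0+1+\cdots+(k-1))}=\om^{\frac12 k(k-1)\ell m}$ and the operator part collapses to $X^{k\ell}Z^{km}$. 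The braiding identity is proved the same way: both $X^{\ell_{1}}Z^{m_{1}}X^{\ell_{2}}Z^{m_{2}}$ and $X^{\ell_{2}}Z^{m_{2}}X^{\ell_{1}}Z^{m_{1}}$ equal $X^{\ell_{1}+\ell_{2}}Z^{m_{1}+m_{2}}$ up to a phase ($\om^{\ell_{2}m_{1}}$ and $\om^{\ell_{1}m_{2}}$ respectively), and comparing the two phases gives the stated factor $\om^{\ell_{2}m_{1}-\ell_{1}m_{2}}$.

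For part (2), the commutation relation with phase $\om^{\ell m_{1}-\ell_{1}m}$ is already a special case of part (1); the only real content is that this phase differs from $1$, i.e. $\ell m_{1}-\ell_{1}m\not\equiv 0\pmod{\go}$. I would argue the contrapositive: assume $\ell m_{1}\equiv \ell_{1}m\pmod{\go}$, use $\gcd(\ell_{1},m_{1})=1$ to choose integers $a,b$ with $a\ell_{1}+bm_{1}=1$, and set $t:=a\ell+bm$. A one-line computation using $\ell m_{1}\equiv \ell_{1}m$ then gives $t\ell_{1}\equiv \ell$ and $tm_{1}\equiv m\pmod{\go}$, so $(\ell,m)=(t\ell_{1},tm_{1})\in\langle(\ell_{1},m_{1})\rangle$, contradicting the hypothesis.

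For part (3), write $W:=X^{\ell}Z^{m}$. First, part (1) gives $W^{\go}=\om^{\frac12\go(\go-1)\ell m}\un$: when $\go$ is odd this phase is $1$ (since $\tfrac{\go-1}{2}\in\Z$ and $\om^{\go}=1$), so $W^{\go}=\un$; when $\go$ is even it equals $(\om^{\go/2})^{(\go-1)\ell m}=(-1)^{\ell m}\un$, which is $\un$ if $\ell m$ is even and $-\un$ if $\ell,m$ are both odd (the only remaining case, given $\gcd(\ell,m)=1$). Thus $W$ satisfies $W^{N}=c\,\un$ with $c\neq 0$ and $t^{N}-c$ separable, so $W$ is diagonalizable with eigenvalues among the $N$-th roots of $c$, namely a subset of $\Om_{\go}$ (when $W^{\go}=\un$) or of the "odd" roots $\Om_{2\go}\setminus\Om_{\go}$ (when $W^{\go}=-\un$); in either case there are exactly $\go$ candidate values. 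To see every candidate occurs with multiplicity $1$, compute $\tr(W^{k})$ for $1\le k\le \go-1$: by part (1), $W^{k}$ is a scalar times $X^{k\ell}Z^{km}$, and $\tr(X^{a}Z^{b})=\go$ if $a\equiv b\equiv 0\pmod{\go}$ and $0$ otherwise; since $\gcd(\ell,m)=1$, $\go\mid k\ell$ and $\go\mid km$ force $\go\mid k$, impossible in that range, so $\tr(W^{k})=0$ for $1\le k\le\go-1$ while $\tr(W^{0})=\go$. Feeding these $\go$ equations into the $\go\times\go$ Vandermonde system in the $\go$ distinct candidate eigenvalues (which is invertible) forces all multiplicities to equal $1$, so the eigenvalue set is exactly $\Om_{\go}$ or $\Om_{2\go}\setminus\Om_{\go}$.

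The main obstacle is part (3): one must marry the parity case analysis for $W^{\go}$ with the trace/Vandermonde counting, and in particular check that the candidate eigenvalue set always has exactly $\go$ elements so that the Vandermonde matrix is square and invertible; parts (1) and (2) are routine once the basic relation $ZX=\om XZ$ is in hand.
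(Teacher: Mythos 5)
Your proof is correct and follows exactly the route the paper intends: the paper leaves this lemma as an exercise (deferring details to [SVZ24b]) and highlights only the key relation $ZX=\om XZ$, which is precisely the engine of your argument for all three parts. Your treatment of part (3) — computing $W^{\go}=\pm\un$ via part (1) with the parity case analysis, then forcing each of the $\go$ candidate roots to occur with multiplicity one through $\tr(W^k)=0$ for $1\le k\le \go-1$ and the invertibility of the resulting Vandermonde system — is complete and sound.
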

		
		\begin{proof}
			The proof is left as exercises, and one can find details in \cite{svz24qudit}. We only highlight the following key property in the proof 
			\begin{equation}
				ZX=\omega XZ
			\end{equation}
			which follows immediately from the definition. 
		\end{proof}
		It is also an interesting exercise to find all the eigenvectors of $X^\ell Z^m$ for general $\go\ge 3$.
		
		\medskip
		
		Now let us come back to our question: Do we have a  Bohnenblust--Hille inequality for the qudit system described above? The answer is affirmative. Adapting the proof of qubit case ($\go=2$) to the general qudit case ($\go \ge 3$), one sees the connection to the cyclic group of order $\go$ in view of the eigenvalue information of certain $X^\ell Z^m$ by Lemma \ref{lem:HW basis general}. As we shall see below, we can reduce the qudit Bohnenblust--Hille inequality to its analogs on classical cyclic groups. The bad news was that we did not know the validness of Bohnenblust--Hille inequality on cyclic groups $\Omega_\go$ for $\go \ge 3$ until recently. This result in the classical world will be the topic of next lecture, but let us state the results here for later use. 
		
		\begin{theorem}[Cyclic Bohnenblust--Hille inequality]\label{thm:bh cyclic qudit}
			Fix $\go \ge 3$ and $d\ge 1$. Then there exists a constant $C(d,\go)>0$ such that for any $n\ge 1$ and any $f:\Omega_\go^n\to \C$ of degree at most $d$, we have 
			\begin{equation}
				\|\widehat{f}\|_{\frac{2d}{d+1}}\le C(d,\go)\|f\|_{\Omega_\go^n}.
			\end{equation}
			Denoting the best constant by $\bh^{\le d}_{\Omega_\go}$, we have $\bh^{\le d}_{\Omega_\go}\le C_\go^{d^2}$ when $\go$ is prime.
		\end{theorem}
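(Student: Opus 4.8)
The plan is to prove Theorem~\ref{thm:bh cyclic qudit} by adapting to $\Omega_\go$ the four-step inductive argument used for the Boolean inequality — Blei's inequality, a hypercontractive moment comparison, the degree-$k$ instance of the inequality itself, and a polarization estimate — so as to obtain, for homogeneous polynomials of degree $d$ (and then, by a standard variant, in general), an inductive inequality
\[
\bh^{\le d}_{\Omega_\go}\le C(d,k,\go)\,\bh^{\le k}_{\Omega_\go},\qquad 1\le k<d ,
\]
which one iterates from $d$ down to the trivial case $k=1$. There is one structural difference from the Boolean setting: on $\{-1,1\}$ one has $x_j^2=1$, so every function is automatically multilinear and no separate linearisation is needed, whereas on $\Omega_\go$ one only has $z_j^\go=1$, so a degree-$d$ polynomial $f=\sum_{\alpha}\widehat f(\alpha)z^\alpha$ (with $0\le\alpha_j\le\go-1$) is merely of individual degree $\le\go-1$ in each variable. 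The cyclic proof therefore needs an extra \emph{linearisation} step, and that step is where the real work lies.

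The moment-comparison ingredient comes from hypercontractivity on $\Z_\go^n$: running the heat semigroup that multiplies a character $z^{\alpha}$ by $e^{-t\,|\{i:\alpha_i\neq0\}|}$, equivalently applying the $\go$-ary uniform-noise operator in each coordinate, gives $\|g\|_2\le\rho_p(\go)^{d}\|g\|_p$ for $1\le p<2$ and every $g$ of degree at most $d$, with $\rho_p(\go)$ depending only on $p$ and $\go$; this is the coordinatewise tensorisation of the known two-point hypercontractive inequality for the $\go$-ary noise operator, together with the fact that a degree-$d$ monomial has at most $d$ active coordinates. (Using the uniform-resampling operator rather than a $\pm1$-type walk avoids the bipartiteness that arises when $\go$ is even.) Blei's inequality, Lemma~\ref{lem:blei}, applies verbatim over $\Omega_\go^n$, being stated for arbitrary scalar arrays. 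Hence, once $f$ has been linearised, Steps~1--3 transcribe directly: Blei reduces $\|\widehat f\|_{2d/(d+1)}$ to a product over $k$-subsets $S\subset[d]$; moment comparison turns the inner $\ell^2$ over the $d-k$ free coordinates into an $L^{2k/(k+1)}$ average in auxiliary $\Omega_\go$-variables at cost $\rho_{2k/(k+1)}(\go)^{d-k}$; and the degree-$k$ instance (induction hypothesis) bounds the rest by $\bh^{\le k}_{\Omega_\go}$ times the supremum of a multilinear form.

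The heart of the matter is the linearisation together with the final polarization (Step~4). One replaces each $z_j^{\alpha_j}$ by a normalised elementary symmetric polynomial in $\min(d,\go-1)$ fresh copies of $z_j$, producing a multi-affine (and still homogeneous) $F$ on $\Omega_\go^N$ with $N\le n\min(d,\go-1)$, whose restriction to the diagonal recovers $f$, whose Fourier coefficients dominate $(\widehat f(\alpha))_\alpha$ up to a factor depending only on $d$ and $\go$, and --- this is the delicate point --- which can be arranged so that $\|F\|_{\Omega_\go^N}\le C_\go^{\cO(d)}\|f\|_{\Omega_\go^n}$. No dependence on $n$ is admissible in this last bound; the input is a Markov-type coefficient estimate for polynomials of individual degree at most $\go-1$ in each variable (such polynomials are determined coordinatewise by their values on $\Omega_\go$, so the estimate has a constant depending only on $\go$), applied uniformly over all $n$ variables, and it is for making this uniform that the field structure of $\Z_\go$ when $\go$ is prime (concretely, the $\gcd$ and eigenvalue facts of Lemma~\ref{lem:HW basis general}) is used. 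Given $F$, Step~4 is then the Boolean polarization Lemma~\ref{lem:polarization} applied to the multi-affine homogeneous $F$, with two distinct points of $\Omega_\go$, say $\{1,\om\}$, in the role of $\{-1,1\}$, together with the elementary fact that a multi-affine function attains its polydisc supremum on the distinguished boundary. (A tempting alternative is to deduce the cyclic inequality from the torus inequality, Theorem~\ref{thm:bh for torus}, by restriction, but $\|f\|_{\Omega_\go^n}$ does not control $\|f\|_{\T^n}$ once the degree exceeds $\go$, so a genuinely new input is needed in any approach.)

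Finally I would assemble Steps~1--4 as in the Boolean proof, obtaining the displayed inductive inequality with $C(d,k,\go)\le\rho_{2k/(k+1)}(\go)^{d-k}\cdot C_\go^{\cO(d)}$; iterating it $\cO(d)$ times multiplies $\cO(d)$ factors each of size $C_\go^{\cO(d)}$, yielding $\bh^{\le d}_{\Omega_\go}\le C_\go^{d^2}$ when $\go$ is prime. For composite $\go\ge3$ the same scheme runs, but the Markov/linearisation step is then available only qualitatively --- with some finite unspecified constant depending on $d$ and $\go$ --- so iterating the purely existential forms of the three ingredients still produces a finite $C(d,\go)$, which is all the general statement of Theorem~\ref{thm:bh cyclic qudit} claims.
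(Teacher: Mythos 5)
Your overall architecture (linearise to a multi-affine lift, then run Blei + hypercontractivity + induction + polarization) could in principle work, and the peripheral steps are fine: Blei's inequality is dimension-blind, hypercontractive moment comparison on $\Z_\go^n$ is available, and for a \emph{multi-affine} homogeneous $F$ the polarization step does go through, since $|F|$ is convex in each variable, so $\|F\|_{(\conv\Omega_\go)^N}=\|F\|_{\Omega_\go^N}$, and $r_\go\overline{\D}\subset\conv(\Omega_\go)$ with homogeneity gives $\|F\|_{\overline{\D}^N}\le r_\go^{-d}\|F\|_{\Omega_\go^N}$. The genuine gap is the linearisation bound you yourself flag as ``the delicate point'' and then do not prove: the claim that the multi-affine lift $F$ (replacing $z_j^{\alpha_j}$ by normalised elementary symmetric polynomials in fresh copies) satisfies $\|F\|_{\Omega_\go^N}\le C_\go^{\cO(d)}\|f\|_{\Omega_\go^n}$ with a constant independent of $n$. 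This is not a routine Markov-type estimate ``applied uniformly over all $n$ variables'': the coordinatewise extension operator has norm strictly larger than $1$ on $C(\Omega_\go)$, so tensorising it gives a constant growing like $C^n$, and the paper's own one-variable example on $\Omega_3$ (the quadratic $p$ with $\|p\|_{\conv(\Omega_3)}\ge\frac{1+2\sqrt3}{4}>\|p\|_{\Omega_3}$) is precisely the demonstration that such naive uniformity fails. In other words, your step (c) is essentially equivalent to the dimension-free Remez-type obstacle $\|f\|_{(\conv\Omega_\go)^n}\lesssim_{d,\go}\|f\|_{\Omega_\go^n}$, which is the whole difficulty of the theorem; asserting it is circular. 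The citation of Lemma \ref{lem:HW basis general} as the input here is also off the mark: that lemma concerns commutation relations and eigenvalues of the Heisenberg--Weyl matrices $X^\ell Z^m$ in the qudit algebra and has no bearing on sup-norm estimates for polynomial extensions on $\Omega_\go^n$. (Your parenthetical dismissal of the torus route is also inaccurate as stated: $\|f\|_{\T^n}\lesssim_{d,\go}\|f\|_{\Omega_\go^n}$ \emph{does} hold for degree-$\le d$, individual-degree-$\le\go-1$ polynomials --- this is Theorem \ref{thm:remez cyclic} --- but it is a hard result proved with the same machinery, not an available shortcut.)

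For comparison, the paper's proof avoids any polarization or multi-affine lift. It restricts $f$ to two-point sections $\{1,\xi\}^n\cong\{-1,1\}^n$, uses the Boolean projection bound (Proposition \ref{prop:riesz proj}) to show the pseudo-projection $\mathfrak{D}_{\xi}$ onto the maximal-support part is $L^\infty$-bounded with a dimension-free constant (Proposition \ref{prop:bdd of d}), deduces the support-homogeneous cyclic inequality from the \emph{Boolean} Bohnenblust--Hille inequality (Lemma \ref{lem:support-homo-BH}), and then proves the Splitting Lemma \ref{lem:splitting-lemma} by iterating $\mathfrak{D}_{\omega},\mathfrak{D}_{\omega^2},\dots,\mathfrak{D}_{\omega^{\go-1}}$; primality enters only through the identity that $\prod_{k=1}^{\go-1}(1-\omega^{jk})$ is a nonzero constant $d_\go$ independent of $j$, which makes the accumulated $\tau$-factors constant across monomials of a given support size. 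If you want to salvage your scheme, you must either prove the dimension-free sup-norm bound for your lift (which amounts to the Remez-type theorem) or replace the lift by a mechanism, like the paper's $\mathfrak{D}_\xi$ iteration, that never leaves the discrete group.
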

		
		We refer to the next chapter for more details of the above theorem. In the next, we shall reduce the qudit Bohnenblust--Hille inequality to this result. To adapt the reduction method to the qudit case, we need to construct suitable density matrices which come from eigen-projections of certain $X^\ell Z^m$'S. On one hand, we need to select enough number of $X^\ell Z^m $ to remember all the (non-zero) Fourier coefficients of $A$. On the one hand, choosing too many $X^\ell Z^m$ may make the commutation relations messy since their commutation relations are more complicated in view of Lemma \ref{lem:HW basis general}.
		
		In the following, we only present the qudit Bohnenblust--Hille inequality and its reduction for prime $\go\ge 3$. When $\go\ge 4$ is non-prime, the result looks a bit different and the proof is more involved \cite{svz24qudit}.
		
		\begin{theorem}[Qudit Bohnenblust--Hille, Heisenberg--Weyl Basis: prime case]\label{thm:bh HW prime}
			Fix a prime number $\go\ge 3$ and suppose $d\ge 1$.
			Consider an observable $A\in M_\go(\C)^{\otimes n}$ of degree at most $d$.
			Then we have
			\begin{equation}
				\label{ineq:bh-hw prime}
				\|\widehat{A}\|_{\frac{2d}{d+1}}\le C(d,\go)\|A\|,
			\end{equation}
			with $C(d,\go)\le (\go+1)^d \textnormal{BH}_{\Omega_\go}^{\le d}$.
		\end{theorem}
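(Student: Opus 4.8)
The plan is to mimic exactly the reduction method used for the qubit case (Theorem \ref{thm:qubit bh}), replacing the three Pauli matrices on each site by a suitably chosen family of Heisenberg--Weyl operators $X^\ell Z^m$, and replacing the Boolean cube $\{-1,1\}^{3n}$ by a power of the cyclic group $\Omega_\go$, so that the Cyclic Bohnenblust--Hille inequality (Theorem \ref{thm:bh cyclic qudit}) supplies the middle inequality for free. Concretely, I would fix on each site a set $\mathcal{G}$ of pairs $(\ell,m)$ with $\gcd(\ell,m)=1$ that together \emph{cover} all of $\Z_\go^2\setminus\{(0,0)\}$ in the sense that every nonzero $(\ell,m)$ lies in the cyclic subgroup $\langle(\ell_0,m_0)\rangle$ for exactly one chosen generator $(\ell_0,m_0)\in\mathcal{G}$; since $\go$ is prime, $\Z_\go^2$ is a $2$-dimensional vector space over the field $\Z_\go$, so the nonzero vectors partition into $\go+1$ lines through the origin, each of size $\go-1$, giving $|\mathcal{G}|=\go+1$. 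Thus the classical function will live on $\Omega_\go^{(\go+1)n}$, the analogue of the factor $3$ in $m=3n$ for qubits.

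Next I would build the density matrices. For each chosen generator $g=(\ell_0,m_0)$ the operator $U_g:=X^{\ell_0}Z^{m_0}$ is a unitary of order $\go$ (or $2\go$), and by part (3) of Lemma \ref{lem:HW basis general} its eigenvalues form $\Omega_\go$ or $\Omega_{2\go}\setminus\Omega_\go$; in either case there are $\go$ distinct eigenvalues indexed by $\Z_\go$, with spectral projections $P^g_t$, $t\in\Z_\go$. For a string $\vec{t}$ assigning to each of the $(\go+1)n$ coordinates an element of $\Z_\go$ (equivalently a point $\vec{\zeta}\in\Omega_\go^{(\go+1)n}$), set $\rho_j(\vec{t}):=\frac{1}{\go+1}\sum_{g\in\mathcal{G}} P^{g}_{t_{g,j}}$ on site $j$, and $\rho(\vec t):=\rho_1(\vec t)\otimes\cdots\otimes\rho_n(\vec t)$, a density matrix. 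Define $f_A(\vec t):=\tr[A\rho(\vec t)]$, so that $\|f_A\|_\infty\le\|A\|$ by duality, exactly as before. The key computation is $\tr[U\,P^g_t]$ for $U$ a single-site Heisenberg--Weyl operator: when $U\in\langle U_g\rangle$ this is a root of unity in the appropriate group, and when $U\notin\langle U_g\rangle$ Lemma \ref{lem:HW basis general}(2) says $U$ and $U_g$ \emph{commute up to a nontrivial phase}, so the analogue of Lemma \ref{lem:qubit orthogonal} forces $\tr[U P^g_t]=0$ (replace the sign $-1$ there by the phase $\omega^{\ell m_0-\ell_0 m}\ne 1$: from $UP=\omega^{\cdots}PU$ one gets $\lambda\tr[UP]=\omega^{\cdots}\lambda\tr[UP]$, hence $\tr[UP]=0$). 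Because the $\go+1$ lines cover everything and meet only at the origin, each basis operator $X^{\ell}Z^{m}$ appearing in the expansion of $A$ on a given site "survives" against exactly one $g$ and is killed by the others; multiplying over the at most $d$ active sites shows that $f_A$ has a Fourier--Walsh expansion on $\Omega_\go^{(\go+1)n}$ of the same degree as $A$, with coefficients equal to those of $A$ up to an explicit scalar of modulus at least $(\go+1)^{-d}$ (each active site contributes a factor $\tfrac{1}{\go+1}$ times a unimodular phase). Hence $\|\widehat{f_A}\|_{\frac{2d}{d+1}}\ge (\go+1)^{-d}\|\widehat{A}\|_{\frac{2d}{d+1}}$, and combining with Theorem \ref{thm:bh cyclic qudit} and $\|f_A\|_\infty\le\|A\|$ gives $\|\widehat{A}\|_{\frac{2d}{d+1}}\le(\go+1)^d\,\bh^{\le d}_{\Omega_\go}\,\|A\|$, which is the claimed bound.

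The main obstacle I anticipate is the bookkeeping in the "survives/killed" dichotomy: one must verify that the spectral projections $P^g_t$ are genuinely orthogonal projections summing to $\un$ (needs $U_g$ normal with $\go$ distinct eigenvalues, i.e.\ the eigenvalue statement of Lemma \ref{lem:HW basis general}(3) together with $\gcd(\ell_0,m_0)=1$), that the phase in Lemma \ref{lem:HW basis general}(2) is indeed a \emph{nontrivial} root of unity precisely when $(\ell,m)\notin\langle(\ell_0,m_0)\rangle$ — here primality of $\go$ is what makes "not in the line" equivalent to "the symplectic pairing $\ell m_0-\ell_0 m$ is a unit", and (3) that when $X^\ell Z^m\in\langle U_g\rangle$ one tracks the correct power and phase so the exponent of $\go+1$ comes out as $d$ rather than something worse. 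Everything else is the same Hölder/duality/degree-preservation argument as in the qubit proof and is routine. When $\go$ is not prime the line decomposition of $\Z_\go^2$ fails and one needs a more delicate choice of generators, which is why the statement is restricted to the prime case here and the general case is deferred to \cite{svz24qudit}.
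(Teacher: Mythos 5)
Your proposal is correct and is essentially the paper's own argument: the same covering of $\Z_\go^2\setminus\{(0,0)\}$ by the $\go+1$ lines $\Sigma_\go$ (primality), the same site-wise density matrices averaging the rank-one eigenprojections of the chosen $X^\ell Z^m$ with weight $\frac{1}{\go+1}$, the duality bound $\|f_A\|_{\Omega_\go^{(\go+1)n}}\le\|A\|$, the anticommutation-with-phase orthogonality lemma to kill cross terms, and the coefficient comparison $\|\widehat{f_A}\|_p\ge(\go+1)^{-\kappa}\|\widehat{A}\|_p\ge(\go+1)^{-d}\|\widehat{A}\|_p$ before invoking Theorem \ref{thm:bh cyclic qudit}. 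One small caution: the relation $UP^g_t=\om^{c}P^g_tU$ is not literally true (a unitary $U$ with $UU_g=\om^{c}U_gU$ permutes the eigenspaces of $U_g$), but the eigenvector computation you gesture at — exactly the paper's Lemma \ref{lem:orthogonal} — does yield $\tr[UP^g_t]=0$, so the step stands as intended.
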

		
		Let us record the following observation as a lemma, as an extension of Lemma \ref{lem:qubit orthogonal}.
		
		\begin{lemma}\label{lem:orthogonal}
			Suppose that $k\ge 1$, $A,B$ are two unitary matrices such that $B^k=\un$, $AB=\lambda BA$ with $\lambda\in\C$ and $\lambda\ne 1$. 
			If $\eta\neq \vec{0}$ is an eigenvector of $B$ with eigenvalue $\mu$ ($\mu\ne 0$ since $\mu^k=1$), then 
			\begin{equation*}
				\langle \eta,A\eta\rangle=0.
			\end{equation*}
		\end{lemma}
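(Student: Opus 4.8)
The plan is to reproduce, essentially verbatim, the two-line computation behind Lemma~\ref{lem:qubit orthogonal}, now keeping track of the scalar $\lambda$ and using unitarity of $B$ to compute the adjoint on the eigenvector. First I would record the elementary facts: since $B^k=\un$, the eigenvalue $\mu$ of $B$ satisfies $\mu^k=1$, hence $|\mu|=1$ and in particular $\mu\neq 0$; and since $B$ is unitary we have $B^\dagger=B^{-1}$, so from $B\eta=\mu\eta$ we get $B^\dagger\eta=B^{-1}\eta=\mu^{-1}\eta=\bar\mu\,\eta$.

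Next I would evaluate $\langle\eta,AB\eta\rangle$ in two ways. On the one hand, $B\eta=\mu\eta$ gives $\langle\eta,AB\eta\rangle=\mu\langle\eta,A\eta\rangle$. On the other hand, the relation $AB=\lambda BA$ together with the definition of the adjoint gives
\[
\langle\eta,AB\eta\rangle=\lambda\langle\eta,BA\eta\rangle=\lambda\langle B^\dagger\eta,A\eta\rangle=\lambda\langle\bar\mu\,\eta,A\eta\rangle=\lambda\mu\langle\eta,A\eta\rangle,
\]
where the last step uses that $\langle\cdot,\cdot\rangle$ is conjugate-linear in the first argument. Comparing the two expressions yields $(1-\lambda)\mu\langle\eta,A\eta\rangle=0$, and since $\mu\neq 0$ and $\lambda\neq 1$ we conclude $\langle\eta,A\eta\rangle=0$.

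There is no real obstacle in this lemma; the only point requiring a moment's care is the bookkeeping for $B^\dagger\eta$, which is why unitarity of $B$ is used (normality of $B$ together with $|\mu|=1$ would already suffice), whereas unitarity of $A$ plays no role in the argument. I would close by noting that taking $k=2$ and $\lambda=-1$ recovers Lemma~\ref{lem:qubit orthogonal}, and that this is exactly the input needed to compute $\tr[X^\ell Z^m\,\rho]$ against eigen-projections of a commuting partner $X^{\ell_1}Z^{m_1}$ in the qudit reduction.
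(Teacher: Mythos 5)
Your proof is correct and follows essentially the same argument as the paper: evaluate $\langle\eta,AB\eta\rangle$ two ways and cancel using $\mu\neq0$, $\lambda\neq1$. The only cosmetic difference is that you obtain $B^\dagger\eta=\bar\mu\eta$ from $B^\dagger=B^{-1}$ and $|\mu|=1$, while the paper writes $B^\dagger=B^{k-1}$ and uses $\mu^{k-1}=\bar\mu$; these are the same bookkeeping step.
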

		
		\begin{proof}
			By assumption
			\[
			\mu\langle \eta,A\eta\rangle
			=\langle \eta,AB\eta\rangle 
			=\lambda \langle \eta,BA\eta\rangle.
			\]
			Since $B^\dagger=B^{k-1}$, $B^\dagger\eta=B^{k-1}\eta=\mu^{k-1}\eta=\overline{\mu}\eta$. Thus
			\[
			\mu\langle \eta,A\eta\rangle
			=\lambda \langle \eta,BA\eta\rangle
			=\lambda \langle B^\dagger\eta,A\eta\rangle
			=\lambda \mu\langle \eta,A\eta\rangle.
			\]
			Hence, $\mu(\lambda-1)\langle \eta,A\eta\rangle=0$. This gives $\langle \eta,A\eta\rangle=0$ as $\mu(\lambda-1)\ne 0$. 
		\end{proof}
		
		When $\go$ is prime, the basis $\{X^\ell Z^m\}$ has nicer properties. 
		
		\begin{lemma}\label{lem:HW basis prime}
			Fix $\go\ge 3$ a prime number. Consider the set
			\begin{equation}\label{eq:Sigma prime}
				\Sigma_{\go}:=\{(1,0),(1,1),\dots, (1,\go-1),(0,1)\}.
			\end{equation}
			Then the group $\Z_{\go}\times \Z_{\go}$ is the union of subgroups 
			\begin{equation}
				\Z_{\go}\times \Z_{\go}=\bigcup_{(\ell,m)\in \Sigma_{\go}}\langle (\ell,m)\rangle
			\end{equation}
			where each two subgroups intersects with the unit $(0,0)$ only. Moreover, for any $(\ell,m)\in \Sigma_\go$, the set of eigenvalues of each $X^\ell Z^m$ is exactly $\Omega_\go.$
		\end{lemma}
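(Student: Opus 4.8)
The plan is to verify the three assertions in turn, all of which reduce to elementary facts about the group $\Z_\go \times \Z_\go$ when $\go$ is prime, together with part (3) of Lemma \ref{lem:HW basis general}. First I would count: since $\go$ is prime, $\Z_\go \times \Z_\go$ has $\go^2$ elements, and every nonzero element has order exactly $\go$ (the order divides $\go$ and is not $1$). Hence each cyclic subgroup $\langle (\ell,m)\rangle$ with $(\ell,m)\neq(0,0)$ has exactly $\go$ elements, i.e. $\go-1$ nonzero ones. Two distinct such subgroups intersect only in $(0,0)$: if they shared a nonzero element $g$, then since $g$ has order $\go$ it generates each of them, forcing them to coincide. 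So the nonzero elements of $\Z_\go\times\Z_\go$ are partitioned into cyclic subgroups, each contributing $\go-1$ nonzero elements; there are therefore $(\go^2-1)/(\go-1)=\go+1$ of these subgroups. The set $\Sigma_\go$ in \eqref{eq:Sigma prime} has exactly $\go+1$ elements, namely $(1,0),(1,1),\dots,(1,\go-1),(0,1)$, and I would check these represent $\go+1$ \emph{distinct} subgroups: $(0,1)$ generates the "vertical" subgroup $\{0\}\times\Z_\go$; and for $m\neq m'$ the element $(1,m)$ cannot lie in $\langle(1,m')\rangle$ because the only multiple of $(1,m')$ with first coordinate $1$ is $(1,m')$ itself (here we use that $\go$ is prime so $1$ is invertible mod $\go$ and the scalar $t$ with $t\cdot 1\equiv 1$ is $t=1$). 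Since we have $\go+1$ distinct subgroups each of size $\go$, pairwise meeting only at $(0,0)$, their union has $(\go+1)(\go-1)+1=\go^2$ elements, hence equals all of $\Z_\go\times\Z_\go$; this gives both the covering and the disjointness-up-to-identity statements.

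For the final assertion about eigenvalues, I would invoke Lemma \ref{lem:HW basis general}(3): if $\gcd(\ell,m)=1$ then the spectrum of $X^\ell Z^m$ is either $\Omega_\go$ or $\Omega_{2\go}\setminus\Omega_\go$. Every $(\ell,m)\in\Sigma_\go$ has $\gcd(\ell,m)=1$ (either $\ell=1$, or $(\ell,m)=(0,1)$), so one of the two cases holds, and it remains to rule out the second. The cleanest way is a determinant or trace computation: $X^\ell Z^m$ is a unitary with $(X^\ell Z^m)^\go = \om^{\frac{1}{2}\go(\go-1)\ell m}\un$ by Lemma \ref{lem:HW basis general}(1), and for odd $\go$ the scalar $\om^{\frac12 \go(\go-1)\ell m}=\om^{\go \cdot \frac{\go-1}{2}\ell m}=1$ since $\go\mid \go$; thus $(X^\ell Z^m)^\go=\un$, so every eigenvalue is a $\go$-th root of unity, which excludes $\Omega_{2\go}\setminus\Omega_\go$. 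Hence the spectrum is contained in $\Omega_\go$, and since $X^\ell Z^m$ has trace $0$ for $(\ell,m)\neq(0,0)$ (as $X^\ell Z^m$ is a nontrivial Heisenberg--Weyl element, explicitly a nonzero-shift or pure-clock traceless matrix) while $\sum_{j=0}^{\go-1}\om^j=0$, the full set $\Omega_\go$ must occur — a multiset of $\go$-th roots of unity of size $\go$ summing to $0$, with the extra structure that $X^\ell Z^m$ generates a cyclic group of order $\go$ acting by a fixed-point-free permutation (or conjugation argument) forces each root to appear exactly once.

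The main obstacle, and the only place requiring genuine care rather than bookkeeping, is this last eigenvalue multiplicity claim: showing the spectrum is \emph{exactly} $\Omega_\go$ with each primitive power appearing once, not merely contained in it. The quickest rigorous route is to exhibit an explicit diagonalization — for $(\ell,m)=(0,1)$ it is $Z$, already diagonal with spectrum $\Omega_\go$; for $(1,0)$ it is $X$, the cyclic shift, whose eigenvalues are $\om^0,\dots,\om^{\go-1}$ via the discrete Fourier transform; and a general $(1,m)$ is conjugate to $X$ by the diagonal matrix $\mathrm{diag}(\om^{-\binom{j}{2}m})_j$ (a routine check using $ZX=\om XZ$). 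This conjugation argument simultaneously establishes that every $X^\ell Z^m$ with $(\ell,m)\in\Sigma_\go$ is unitarily equivalent to either $X$ or $Z$, hence has simple spectrum equal to $\Omega_\go$, completing the proof; full details are in \cite{svz24qudit}.
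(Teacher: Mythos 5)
Your proof is correct. Note that the paper itself gives no argument here --- the proof is left as an exercise with a pointer to \cite{svz24qudit} --- so there is nothing to compare against; your write-up is a complete solution. The partition part is exactly the standard counting: for prime $\go$ every nonzero element has order $\go$, distinct order-$\go$ cyclic subgroups meet only at $(0,0)$, there are $(\go^2-1)/(\go-1)=\go+1$ of them, and the $\go+1$ elements of $\Sigma_\go$ generate pairwise distinct ones. For the spectrum, your first argument already suffices for the stated claim: each $(\ell,m)\in\Sigma_\go$ has $\gcd(\ell,m)=1$, and since $(X^\ell Z^m)^{\go}=\om^{\frac{1}{2}\go(\go-1)\ell m}\un=\un$ (using that $\go$ is odd, so $\frac{\go-1}{2}\in\Z$ and the exponent is divisible by $\go$), all eigenvalues lie in $\Omega_\go$, which rules out the alternative $\Omega_{2\go}\setminus\Omega_\go$ in Lemma \ref{lem:HW basis general}(3) and forces the set of eigenvalues to be exactly $\Omega_\go$. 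The middle passage about trace zero ``forcing each root to appear exactly once'' is the only loose spot --- as phrased it is not justified (though for prime $\go$ it can be, via the minimal polynomial $1+x+\cdots+x^{\go-1}$ of $\om$) --- but it is harmless: the lemma only asserts the \emph{set} of eigenvalues, and your explicit conjugation of $XZ^m$ to $X$ by $\mathrm{diag}(\om^{-\binom{j}{2}m})_j$ (well-defined mod $\go$ precisely because $\go$ is odd) settles simplicity of the spectrum anyway.
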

		
		\begin{proof}
			%
			The proof is left as an exercise. See \cite{svz24qudit} for details.
		\end{proof}
		

		Now we are ready to prove Theorem \ref{thm:bh HW prime}.
		\begin{proof}[Proof of Theorem \ref{thm:bh HW prime}]
			Fix a prime number $\go\ge 3$. Recall that $\om=e^{\frac{2\pi i}{\go}}$. Consider $\Sigma_\go$ defined in \eqref{eq:Sigma prime}.
			For any $(\ell,m)\in \Sigma_\go$, by Lemma \ref{lem:HW basis prime} any $z\in\Om_\go$ is an eigenvalue of $X^\ell Z^m$ and we denote by $e^{\ell,m}_{z}$ the corresponding unit eigenvector.
			For any vector $\vec{\om}\in \Om_\go^{(\go+1)n}$ of the form (noting that $|\Sigma_\go|=\go+1$)
			\begin{equation}\label{eq:defn of vec omega}
				\vec{\om}=({\vec{\om}}^{\ell,m})_{(\ell,m)\in \Sigma_\go},
				\qquad {\vec{\om}}^{\ell,m}=(\omega^{\ell,m}_1,\dots, \omega^{\ell,m}_n)\in \Om_\go^{n},
			\end{equation}
			we consider the matrix 
			\[
			\rho(\vec{\om}):=\rho_{1}(\vec{\om})\otimes \cdots\otimes \rho_{n}(\vec{\om})
			\]
			where 
			\[
			\rho_k(\vec{\om}):=\frac{1}{\go+1}\sum_{(\ell,m)\in \Sigma_\go}\ketbra{e^{\ell,m}_{\om^{\ell,m}_k}}{e^{\ell,m}_{\om^{\ell,m}_k}}.
			\]
			Then each $\rho_{k}(\vec{\om})$ is a density matrix and so is $\rho(\vec{\om})$. 
			
			Fix $(\ell,m)\in \Sigma_\go$ and $1\le k\le \go-1$. We have by Lemma \ref{lem:HW basis general}
			\begin{align*}
				\tr[X^{k\ell}Z^{k m}\ketbra{e^{\ell,m}_{z}}{e^{\ell,m}_{z}}]
				&=\om^{-\frac{1}{2}k(k-1)\ell m}\langle  e^{\ell,m}_{z},(X^\ell Z^m)^k e^{\ell,m}_{z}\rangle\\
				&=\om^{-\frac{1}{2}k(k-1)\ell m}z^k, \qquad z\in \Om_\go.
			\end{align*}
			On the other hand, for any $(\ell,m)\neq (\ell',m')\in \Sigma_\go$, we have $(k\ell,km)\notin\langle (\ell',m')\rangle$ by Lemma \ref{lem:HW basis prime}.
			From our choice $\gcd (\ell',m')=1$. So Lemma \ref{lem:HW basis general} gives 
			\begin{equation*}
				X^{k\ell}Z^{km}X^{\ell'}Z^{m'}=\om^{k\ell' m-k\ell m'}X^{\ell'}Z^{m'}X^{k\ell}Z^{km}
			\end{equation*}
			with $\om^{k\ell' m-k\ell m'}\neq 1$. This, together with Lemma \ref{lem:orthogonal}, implies
			\begin{equation*}
				\tr[X^{k\ell}Z^{km}\ketbra{e^{\ell',m'}_{z}}{e^{\ell',m'}_{z}}]
				=\langle e^{\ell',m'}_{z},X^{k\ell}Z^{km}e^{\ell',m'}_{z}\rangle=0, \qquad z\in \Om_\go.
			\end{equation*}
			
			All combined, for all $1\le k\le \go-1,(\ell,m)\in \Sigma_\go$ and $1\le j\le n$ we get
			\begin{align*}
				\tr[X^{k\ell} Z^{k m } \rho_j(\vec{\om})]
				&=\frac{1}{\go+1}\sum_{(\ell',m')\in \Sigma_\go}\left\langle e^{\ell',m'}_{\om^{\ell',m'}_j}, X^{k\ell}Z^{ km}e^{\ell',m'}_{\om^{\ell',m'}_j}\right\rangle\\
				&=\frac{1}{\go+1}\left\langle e^{\ell,m}_{\om^{\ell,m}_j}, X^{k\ell}Z^{ km}e^{\ell,m}_{\om^{\ell,m}_j}\right\rangle\\
				&=\frac{1}{\go+1}\om^{-\frac{1}{2}k(k-1)\ell m}(\om^{\ell,m}_j)^k.
			\end{align*}

			Note that by Lemma \ref{lem:HW basis prime} any polynomial in $M_\go(\C)^{\otimes n}$ of degree at most $d$ is a linear combination of monomials
			\[
			A(\vec{k},\vec{\ell},\vec{m};\vec{i}):=
			\cdots \otimes X^{k_1\ell_1}Z^{k_1 m_1}\otimes \cdots \otimes X^{k_\kappa\ell_\kappa}Z^{k_\kappa m_\kappa}\otimes\cdots
			\]
			where
			\begin{itemize}
				\item $\vec{k}=(k_1,\dots,k_\kappa)\in \{1,\dots, \go-1\}^\kappa$ with $0\le \sum_{j=1}^{\kappa}k_j\le d$;
				\item $\vec{\ell}=(\ell_1,\dots, \ell_\kappa),\vec{m}=(m_1,\dots, m_\kappa)$ with each $(\ell_j,m_j)\in \Sigma_\go$;
				\item $\vec{i}=(i_1,\dots, i_\kappa)$ with $1\le i_1 <\cdots<i_\kappa\le n$;
				\item $X^{k_j\ell_j}Z^{k_j m_j}$ appears in the $i_j$-th place, $1\le j\le \kappa$, and all the other $n-\kappa$ elements in the tensor product are the ($\go\times \go$) identity matrices $\un$.
			\end{itemize}
			So for any $\vec{\om}\in \Om_\go^{(\go+1)n}$ of the form \eqref{eq:defn of vec omega} we have from the above discussion that
			\begin{align*}
				\tr[A(\vec{k},\vec{\ell},\vec{m};\vec{i})\rho(\vec{\om})]
				&=\prod_{j=1}^{\kappa}\tr[X^{k_j\ell_j}Z^{k_j m_j}\rho_{i_j}(\vec{\om})]\\
				&=\frac{\om^{-\frac{1}{2}\sum_{j=1}^{\kappa}k_j(k_j-1)\ell_j m_j}}{(\go+1)^{\kappa}}(\om^{\ell_1,m_1}_{i_1})^{k_1}\cdots (\om^{\ell_\kappa,m_\kappa}_{i_\kappa})^{k_\kappa}.
			\end{align*}
			Thus $\vec{\om}\mapsto 	\tr[A(\vec{k},\vec{\ell},\vec{m};\vec{i})\rho(\vec{\om})]$ is a monomial on $\Om_\go^{(\go+1)n}$ of degree at most $\sum_{j=1}^{\kappa}k_j\le d$.
			
			Now for general $A\in M_\go(\C)^{\otimes n}$ of degree at most $d$:
			\begin{equation*}
				A=\sum_{\vec{k},\vec{\ell},\vec{m},\vec{i}} c(\vec{k},\vec{\ell},\vec{m};\vec{i})A(\vec{k},\vec{\ell},\vec{m};\vec{i})
			\end{equation*}
			where the sum runs over the above $(\vec{k},\vec{\ell},\vec{m};\vec{i})$. This is the Fourier expansion of $A$ and $\{c(\vec{k},\vec{\ell},\vec{m};\vec{i})\}$ are the Fourier coefficients. So 
			\begin{equation*}
				\|\widehat{A}\|_p=\left(\sum_{\vec{k},\vec{\ell},\vec{m},\vec{i}}|c(\vec{k},\vec{\ell},\vec{m};\vec{i})|^p\right)^{1/p},\qquad p>0.
			\end{equation*}
			To each $A$ we assign the function $f_A$ on $\Om_\go^{(\go+1)n}$ given by
			\begin{align*}
				f_A(\vec{\om})&=\tr[A\rho(\vec{\om})]\\
				&=\sum_{\vec{k},\vec{\ell},\vec{m},\vec{i}} \frac{\om^{-\frac{1}{2}\sum_{j=1}^{\kappa}k_j(k_j-1)\ell_j m_j}c(\vec{k},\vec{\ell},\vec{m};\vec{i})}{(\go+1)^{\kappa}}(\om^{\ell_1,m_1}_{i_1})^{k_1}\cdots (\om^{\ell_\kappa,m_\kappa}_{i_\kappa})^{k_\kappa}.
			\end{align*}
			Note that this is the Fourier expansion of $f_A$ since the monomials $(\om^{\ell_1,m_1}_{i_1})^{k_1}\cdots (\om^{\ell_\kappa,m_\kappa}_{i_\kappa})^{k_\kappa}$ differ for distinct $(\vec{k},\vec{\ell},\vec{m};\vec{i})$'s. Therefore, for $p>0$
			\begin{align*}
				\|\widehat{f_A}\|_p
				=\left(\sum_{\vec{k},\vec{\ell},\vec{m},\vec{i}}\left|\frac{c(\vec{k},\vec{\ell},\vec{m};\vec{i})}{(\go+1)^{\kappa}}\right|^p\right)^{1/p}
				\ge \frac{1}{(\go+1)^{d}}\left(\sum_{\vec{k},\vec{\ell},\vec{m},\vec{i}}|c(\vec{k},\vec{\ell},\vec{m};\vec{i})|^p\right)^{1/p}
				=\frac{1}{(\go+1)^{d}}	\|\widehat{A}\|_p.
			\end{align*}
			According to Theorem \ref{thm:bh cyclic qudit}, one has
			\begin{equation*}
				\|\widehat{f_A}\|_{\frac{2d}{d+1}}\le \textnormal{BH}^{\le d}_{\Omega_\go}\|f_A\|_{\Omega_\go^{(\go+1)n}}
			\end{equation*}
			for some $\textnormal{BH}^{\le d}_{\Omega_\go}<\infty$. Since each $\rho(\vec{\omega})$ is a density matrix, we have by duality that 
			\begin{equation*}
				\|f_A\|_{\Omega_\go^{(\go+1)n}}=\sup_{\vec{\omega}\in \Omega_{\go}^{(\go+1)n}}|\tr[A\rho(\vec{\omega})]|\le \|A\|.
			\end{equation*}
			All combined, we obtain
			\begin{equation*}
				\|\widehat{A}\|_{\frac{2d}{d+1}}
				\le (\go+1)^d	\|\widehat{f_A}\|_{\frac{2d}{d+1}}
				\le (\go+1)^d\textnormal{BH}^{\le d}_{\Omega_\go}\|f_A\|_{\Omega_\go^{(\go+1)n}}
				\le (\go+1)^d\textnormal{BH}^{\le d}_{\Omega_\go}\|A\|\,. \qedhere
			\end{equation*}
		\end{proof}
		

		
		
		So the reduction method still works for qudit systems (for prime $\go$), but we still need to prove the classical Bohnenblust--Hille inequality for cyclic groups, otherwise the above reduction does not give anything meaningful. This question will be addressed in the next lecture.

		\chapter{Bohnenblust--Hille inequality on general cyclic groups and more dimension-free phenomena} 
		
		\section{An obstacle: a generalized maximum modulus principle}
		
		For $\go\ge 3$ and $n\ge 1$, any function $f:\Omega_\go^n\to \C$ has the unique Fourier expansion 
		\begin{equation}
			f(z)=\sum_{\alpha\in \{0,1,\dots, \go-1\}^n}\widehat{f}(\alpha)z^{\alpha},\qquad z\in \Omega_\go^n.
		\end{equation}
		Here, we fix each $\alpha_j\in \{0,1,\dots, \go-1\}$ so that we can extend $f$ to an analytic function on $\C^n$ (which we still denote by $f$ in the following) with the same expression. We say that $f$ is degree at most $d$ if $\widehat{f}(\alpha)=0$ whenever $|\alpha|=\sum_{j}\alpha_j>d.$
		
		In this lecture, we aim to prove 
		
		\begin{theorem}\label{thm: bh cyclic}
			Fix $d\ge 1$ and $\go\ge 3$. There exists $C(d,\go)>0$ such that for all $n\ge 1$ and for all $f:\Omega_\go^n\to \C$ of degree at most $d$ we have 
			\begin{equation}\label{ineq: bh cyclic}
				\|\widehat{f}\|_{\frac{2d}{d+1}}\le C(d,\go)\|f\|_{\Omega_\go^n}.
			\end{equation}
			Moreover, for the best constant $\bh^{\le d}_{\Omega_\go}$, we have $\bh^{\le d}_{\Omega_\go}\le C_\go^{d^2}$ for prime $\go$.
		\end{theorem}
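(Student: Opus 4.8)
The plan is to reduce the problem to the torus. Having already extended $f$ to an analytic polynomial on $\C^n$ with individual degrees $<\go$ (as arranged before the statement), the decisive ingredient I want is a \emph{generalized maximum modulus principle}
\[
\|f\|_{\T^n}\le C_1(d,\go)\,\|f\|_{\Omega_\go^n}
\]
with $C_1(d,\go)$ \emph{independent of $n$}. Granting this, the circle Bohnenblust--Hille inequality (Theorem~\ref{thm:bh for torus}) applies to $f$ viewed as an analytic polynomial of degree $\le d$ on $\T^n$ and gives $\|\widehat f\|_{\frac{2d}{d+1}}\le \bh^{\le d}_{\T}\|f\|_{\T^n}\le \bh^{\le d}_{\T}\,C_1(d,\go)\|f\|_{\Omega_\go^n}$, i.e.\ $\bh^{\le d}_{\Omega_\go}\le C_1(d,\go)\,\bh^{\le d}_{\T}$. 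Since $\bh^{\le d}_{\T}\le C^{\sqrt{d\log d}}$ is sub-exponential, it then only remains to check $C_1(d,\go)\le C_\go^{d^2}$ for prime $\go$, which would finish the proof.

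The hard part is the maximum modulus principle itself, and this is the obstacle foreshadowed by the section title. Because each variable occurs with individual degree $\le\go-1$ and $\Omega_\go$ consists of exactly $\go$ equally spaced nodes, in one variable $f$ is pinned down by its values on $\Omega_\go$, and the reproducing kernel of degree-$<\go$ polynomials at a boundary point yields a one-variable comparison $\|\cdot\|_{\T}\le c_\go\|\cdot\|_{\Omega_\go}$; but tensorising this over the $n$ coordinates costs $c_\go^{\,n}$, which is dimension-dependent and hence useless. To beat this one must genuinely exploit the \emph{low total degree}. Concretely, I would use the reproducing kernel $K_w(z)=\sum_{\alpha\in\Z_\go^n,\,|\alpha|\le d}\bar w^{\alpha}z^{\alpha}$ of the degree-$\le d$ subspace of $L^2(\Omega_\go^n)$, so that $|f(w)|\le \|f\|_{\Omega_\go^n}\,\|K_w\|_{L^1(\Omega_\go^n)}$ for every $w\in\T^n$, and then bound $\|K_w\|_{L^1(\Omega_\go^n)}$ uniformly in $w$: insert a Dirichlet/Fej\'er factor of degree $d$ to implement the truncation $|\alpha|\le d$, factor over the coordinates using their independence, and observe that the degree constraint forces all but $\le d$ coordinates to contribute a factor which, after the truncation kernel is taken into account, telescopes to $1$, leaving a genuine contribution of size $C_\go^{\mathcal O(d)}$ from the $\le d$ ``active'' coordinates. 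Carrying out this telescoping rigorously — turning the naive $C_\go^{\,n}$ into a dimension-free $C_\go^{d^2}$ — is where essentially all the work lies.

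As a fallback (plan B), one can bypass $\T^n$ altogether and run the four-step degree induction used in the Boolean case directly over $\Omega_\go^n$: Blei's inequality (Lemma~\ref{lem:blei}) to split the $\ell^{2d/(d+1)}$-norm; a moment-comparison inequality $\|g\|_{L^2(\Omega_\go^m)}\le \rho_{\go,p}^{\deg g}\|g\|_{L^p(\Omega_\go^m)}$ coming from hypercontractivity on $\Z_\go^m$ (with $\rho_{\go,p}$ depending on $\go$, worse than the Boolean value); the degree-$k$ case of the theorem applied to the ``head'' variables; and a polarization step. The catch is that this polarization must keep the supremum on $\Omega_\go^n$ — it cannot be pushed to $[-1,1]^n$ or $\T^n$ the way Lemma~\ref{lem:polarization} does, precisely because $f$ is not multilinear when $\go\ge 3$, so the same maximum-modulus obstacle resurfaces; one handles it by extracting the needed coefficient of the associated $d$-linear form through interpolation at $\go$-th (or $(\go d)$-th) roots of unity and a Markov-type coefficient bound, at a per-step cost $\go^{\mathcal O(d)}$. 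Iterating over the $\mathcal O(d)$ degrees again accumulates $C_\go^{d^2}$ for prime $\go$, while for non-prime $\go$ the induction still closes and gives finiteness of $\bh^{\le d}_{\Omega_\go}$ without the clean exponential bound. Either way, the essential difficulty is the same: controlling, dimension-freely, the passage from the finite sampling set $\Omega_\go^n$ to the continuous torus $\T^n$.
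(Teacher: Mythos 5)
There is a genuine gap: in both of your plans the dimension-free step that constitutes the actual content of the theorem is left as an unproven claim, and in Plan~A the naive version of that step is false. Your Plan~A assumes the generalized maximum modulus principle $\|f\|_{\T^n}\le C_1(d,\go)\|f\|_{\Omega_\go^n}$, which is exactly the obstacle \eqref{ineq:obstacle of convex hull} (equivalently Theorem \ref{thm:remez cyclic}); in these notes that inequality is \emph{derived from} the machinery built to prove Theorem \ref{thm: bh cyclic} (Proposition \ref{prop:bdd of d}, Properties A and B), not available as an input, so your logical order is reversed. Your proposed route to it via the reproducing kernel cannot work as stated: the bound $|f(w)|\le\|f\|_{\Omega_\go^n}\|K_w\|_{L^1(\Omega_\go^n)}$ requires the $L^1$-norm of the degree-truncated kernel to be dimension-free, and it is not — already on the Boolean cube with $d=1$ and $w=\vec 1$ one has $\E_z\bigl|1+\sum_{j}z_j\bigr|\asymp\sqrt{n}$. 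The ``insert a Fej\'er factor and telescope'' suggestion is precisely where all the difficulty sits and is not carried out; no mechanism is given for why the truncation should cost only $C_\go^{\mathcal{O}(d)}$ per active coordinate. Plan~B runs into the same wall at the polarization step: a Markov-type coefficient bound needs control of $f$ on a disc, interval, or convex set, whereas you only control $f$ on the discrete set $\Omega_\go^n$; the paper's explicit quadratic on $\Omega_3$ shows $\|p\|_{\conv(\Omega_3)}>\|p\|_{\Omega_3}$ and that tensorization blows up, and ``interpolation at $\go$-th roots of unity'' is asserted, not constructed. Your claimed constants ($C_\go^{d^2}$ through Plan~A, $\go^{\mathcal{O}(d)}$ per step in Plan~B) are likewise unsupported.

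For comparison, the paper's proof bypasses the torus entirely and reduces to the Boolean case: restricting $f$ to two-point subsets $\{1,\xi\}^n\subset\Omega_\go^n$ and projecting onto the top homogeneous part on $\Omega_2^n$ (Proposition \ref{prop:riesz proj}) yields the dimension-free boundedness of the pseudo-projection $\mathfrak{D}_\xi$ (Proposition \ref{prop:bdd of d}); applying the Boolean Bohnenblust--Hille inequality in the auxiliary $x$-variables and averaging in $z$ gives the support-homogeneous case (Lemma \ref{lem:support-homo-BH}); and the Splitting Lemma \ref{lem:splitting-lemma} — iterating $\mathfrak{D}_{\omega^k}$ over $k=1,\dots,K-1$ so that, by primality, the accumulated factor $\prod_k\tau_\alpha^{(\omega^k)}=d_K^{\ell}$ is the same for all monomials — controls each support-homogeneous part with constant $C_K^{d^2}$, after which the triangle inequality finishes the proof. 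If you want to salvage your outline, you would need to either prove the Remez-type inequality independently (which is at least as hard as the theorem itself) or supply a polarization substitute that never leaves $\Omega_\go^n$; the paper's $\mathfrak{D}_\xi$ construction is precisely such a substitute.
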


		
		The proofs of Bohnenblust--Hille inequalities for circle groups \eqref{ineq:bh for torus}  and Boolean cubes \eqref{ineq:boolean bh} (can be understood as cases $\go=\infty$ and $\go=2$ respectively) are similar. If one follows the same four-step argument presented in the first lecture, then one may find an obstacle in the \textbf{step 4} using polarization where a convex structure is needed. After a careful adaption of the first three steps, one may end up with ($\conv\Omega_\go$ denoting the convex hull of $\Omega_\go$)
		\begin{equation}\label{ineq:bh convex hull}
			\|\widehat{f}\|_{\frac{2d}{d+1}}\le C(d,\go)\|f\|_{(\conv\Omega_\go)^{n}}
		\end{equation}
		for all $f:\Omega_\go^{n}\to \C$ of degree at most $d$, which is not exactly what we need. A quicker way to achieve the same inequality \eqref{ineq:bh convex hull}, regardless of the constant, is to embed the unit circle/disc into the $\conv\Omega_\go$ after scaling. In fact, let $r=r_\go=\cos^{-1}(\pi/\go)\in (1,2]$ which is the smallest $r>0$ such that $r\T\subset \conv(\Om_\go)$. Then $g(\cdot)=f(\cdot/r_\go)$ is also of degree at most $d$, so
		\begin{align}
			\|\widehat{f}\|_{\frac{2d}{d+1}}
			\le r_\go^d \left(\sum_{|\alpha|\le d}\left|\widehat{f}(\alpha)/r_\go^{|\alpha|}\right|^{\frac{2d}{d+1}}\right)^{\frac{d+1}{2d}}
			= r_\go^d\|\widehat{g}\|_{\frac{2d}{d+1}}
			&\le r_\go^d\bh^{\le d}_{\T}\|g\|_{\T^n}\\
			&=r_\go^d\bh^{\le d}_{\T}\|f\|_{(r_\go\T)^{n}}
			\le  r_\go^d\bh^{\le d}_{\T}\|f\|_{(\conv\Omega_\go)^{n}}
		\end{align}
		where we used the Bohnenblust--Hille inequality for $\T^n$ \eqref{ineq:bh for torus}. 
		
		So it seems that we need to prove the following dimension-free estimate
		\begin{equation}\label{ineq:obstacle of convex hull}
			\|f\|_{(\conv\Omega_\go)^{n}}
			\le C(d,\go)\|f\|_{\Omega_\go^{n}}
		\end{equation}
		for all $f:\Omega_\go^{n}\to \C$ of degree at most $d$. When $\go=2$, this becomes an equality with $C(d,\go)=1$ by convexity since all $f$ in consideration are multi-affine. When $\go=\infty$ meaning that $\Om_\infty=\T$, this is also an equality with $C(d,\go)=1$ by the maximum modulus principle. 
		

		However, for $3\le \go<\infty$, this \emph{generalized maximum modulus principle} looks highly non-trivial. Already in the one-dimensional case, the constant cannot be 1, as we shall explain now for $\go=3$. Let $\omega:=e^{\frac{2\pi i}{3}}$. Consider the polynomial
		\begin{equation*}
			p(z):=p(1)\frac{(z-\omega)(z-\omega^2)}{(1-\omega)(1-\omega^2)}
			+p(\omega)\frac{(z-1)(z-\omega^2)}{(\omega-1)(\omega-\omega^2)}
			+p(\omega^2)\frac{(z-1)(z-\omega)}{(\omega^2-1)(\omega^2-\omega)},\qquad z\in \Omega_3
		\end{equation*}
		with $p(1), p(\omega), p(\omega^2)$ to be chosen later. Put $z_0:=\frac{1+\omega}{2}\in \conv(\Omega_3)$. Then 
		\begin{equation*}
			|z_0-1|=|z_0-\omega|=\frac{\sqrt{3}}{2},\qquad |z_0-\omega^2|=\frac{3}{2}.
		\end{equation*}
		Now we choose $p(1), p(\omega), p(\omega^2)$ to be complex numbers of modulus $1$ such that 
		\begin{equation*}
			p(1)\frac{(z_0-\omega)(z_0-\omega^2)}{(1-\omega)(1-\omega^2)}=\left|\frac{(z_0-\omega)(z_0-\omega^2)}{(1-\omega)(1-\omega^2)}\right|
			=\frac{\frac{3\sqrt{3}}{4}}{3}
			=\frac{\sqrt{3}}{4},
		\end{equation*}
		\begin{equation*}
			p(\omega)\frac{(z_0-1)(z_0-\omega^2)}{(\omega-1)(\omega-\omega^2)}
			=\left|\frac{(z_0-1)(z_0-\omega^2)}{(\omega-1)(\omega-\omega^2)}\right|
			=\frac{\frac{3\sqrt{3}}{4}}{3}
			=\frac{\sqrt{3}}{4},
		\end{equation*}
		\begin{equation*}
			p(\omega^2)\frac{(z_0-1)(z_0-\omega)}{(\omega^2-1)(\omega^2-\omega)}
			=\left|\frac{(z_0-1)(z_0-\omega)}{(\omega^2-1)(\omega^2-\omega)}\right|
			=\frac{\frac{3}{4}}{3}
			=\frac{1}{4}.
		\end{equation*}
		Therefore, this choice of $p$ satisfies 
		\begin{equation*}
			\|p\|_{\conv(\Omega_3)}\ge |p(z_0)|
			=\frac{\sqrt{3}}{4}+\frac{\sqrt{3}}{4}+\frac{1}{4}
			=\frac{1+2\sqrt{3}}{4}>1=\|p\|_{\Omega_3}.
		\end{equation*}
		Therefore, a simple tensorization argument will not give \eqref{ineq:obstacle of convex hull}, since the constant will blow up as $n\to\infty$. So we do need to make use of the low-degree assumption. 
		

		Now we are faced with two approaches of proving Theorem \ref{thm: bh cyclic}: (1) overcome the obstacle \eqref{ineq:obstacle of convex hull}; or (2) find a way to bypass \eqref{ineq:obstacle of convex hull} to prove cyclic Bohnenblust--Hille inequality \eqref{ineq: bh cyclic} directly.  We will start with the approach (2) but its solution also led us to overcome the obstacle \eqref{ineq:obstacle of convex hull} as well. 
		
		\section{ Bohnenblust--Hille inequality on general cyclic groups: proof sketch}
		
		In this section we sketch a proof of Theorem \ref{thm: bh cyclic} without knowing \eqref{ineq:obstacle of convex hull}. We refer to \cite{svz24bh} for details. To avoid some technical issues and to be consistent with the reduction method presented in the last lecture, we consider prime $\go \ge 3$ only.
		Our idea is to reduce the Bohnenblust--Hille inequality \eqref{ineq: bh cyclic} over $\Om_\go^n,\go\ge 3$ to that over $\Om_2=\{-1,1\}^n$ \eqref{ineq:boolean bh} which may sound unreasonable. To grasp the idea, we start with a simple observation 
		\begin{equation}
			\|f\|_{\Om_\go^n}\ge \|f\|_{\{a,b\}^n}
		\end{equation}
		for any $a\neq b\in \Om_\go$. One may think of $\{a,b\}$ as an affine transformation of $\{-1,1\}$ via
		\begin{equation}
			\left\{\frac{a+b}{2}+\frac{a-b}{2}x,x=\pm1 \right\}=\{a,b\}.
		\end{equation}
		and then we can apply  known dimension-free estimates over $\{-1,1\}^n.$ 
		
		So for any $A\subset [n]$ with $|A|=m$, and for each $\alpha$ with $\supp(\alpha)=A$, we have for $z\in \{a,b\}^n$:
		\begin{align*}
			z^{\alpha}=\prod_{j:\alpha_j\neq 0}z_j^{\alpha_j}
			=\prod_{j:\alpha_j\neq 0}\left(\frac{a^{\alpha_j}+b^{\alpha_j}}{2}+\frac{a^{\alpha_j}-b^{\alpha_j}}{2}x_j\right)
			=&\prod_{j:\alpha_j\neq 0}\left(\frac{a^{\alpha_j}-b^{\alpha_j}}{2}\right)\cdot x^A+\cdots
		\end{align*}
		where $x_A$ is of degree $|A|=m$ while $\cdots$ is of degree $<m$. For convenience, we put 
		\begin{equation}
			\tau_\alpha^{(a,b)}=\prod_{j:\alpha_j\neq 0}\left(a^{\alpha_j}-b^{\alpha_j}\right).
		\end{equation}		
		So when restricted to $\{a,b\}^n$, the polynomial $f(z)=\sum_{|\supp(\alpha)|\le \ell}a_{\alpha}z^\alpha$ becomes 
		\begin{equation}\label{eq: z to x}
			f(z)
			=\sum_{m\le \ell}\frac{1}{2^m}\sum_{|A|=m}\left(\sum_{ \supp(\alpha)=A} a_{\alpha}\tau_\alpha^{(a,b)}\right)
			x_A+\cdots,\qquad x\in \Omega_2^n.
		\end{equation}
		Again, for each $m\le \ell$, $\cdots$ is some polynomial of degree $<m$. To summarize, one gets a low-degree function \eqref{eq: z to x} on $\Omega_2^n$ after restricting to (product of) two-point subsets. In general, the ... part looks very complicated but its highest degree level has a good form and carries some information of the largest support level of the original function $f$. We shall have a surgery starting from here, before which we need the following lemma that can be found in \cite{dmp19}.
		
		%

		
		\begin{proposition}\label{prop:riesz proj}
			Fix $1\le d\le n$. For any  $g:\Omega_2^n\to \C$ of degree at most $d$, denote by $g_m$ its $m$-homogeneous part, $0\le m\le d$. Then 
			\begin{equation}
				\|g_m\|_{\Omega_2^n}\le (1+\sqrt{2})^d\|g\|_{\Omega_2^n},\qquad 0\le m\le d.
			\end{equation}		 
		\end{proposition}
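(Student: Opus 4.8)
The plan is to restrict the multilinear extension of $g$ to lines through the origin and then invoke the same coefficientwise Markov inequality that was used in the proof of Lemma~\ref{lem:polarization}. Write $g=\sum_{|S|\le d}\widehat{g}(S)\chi_S$ for the Fourier--Walsh expansion, so that $g$ is a \emph{multilinear} polynomial (affine in each coordinate) and $g_m=\sum_{|S|=m}\widehat{g}(S)\chi_S$.

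The first step is to extend $g$ to $[-1,1]^n$ by the same formula and note that nothing is gained or lost on the solid cube: $\|g\|_{[-1,1]^n}=\|g\|_{\Omega_2^n}$. Indeed, fixing all but one coordinate, $t\mapsto|g(\dots,t,\dots)|^2$ is a quadratic with nonnegative leading coefficient, hence maximal at $t=\pm1$; iterating over the $n$ coordinates pushes the maximum of $|g|$ to a vertex. The second step is to fix $x\in\Omega_2^n$ and consider the one-variable polynomial
\begin{equation*}
	P_x(t):=g(tx)=\sum_{|S|\le d}\widehat{g}(S)\,t^{|S|}\chi_S(x)=\sum_{m=0}^{d}g_m(x)\,t^m,
\end{equation*}
which has degree at most $d$ and whose coefficient of $t^m$ is precisely $g_m(x)$. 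For $t\in[-1,1]$ we have $tx\in[-1,1]^n$, so
\begin{equation*}
	\|P_x\|_{[-1,1]}=\sup_{t\in[-1,1]}|g(tx)|\le\|g\|_{[-1,1]^n}=\|g\|_{\Omega_2^n}.
\end{equation*}

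The third step applies Markov's estimate for the coefficients of a degree-$\le d$ polynomial, exactly as quoted in the proof of Lemma~\ref{lem:polarization}: for $p(t)=\sum_{m=0}^{d}a_m t^m$ one has $|a_m|\le M_{m,d}\|p\|_{[-1,1]}$ with $M_{m,d}\le(1+\sqrt{2})^d$. Since $P_x$ is complex-valued, one applies this instead to the real polynomial $\mathrm{Re}\bigl(e^{-i\phi}P_x\bigr)$ with $\phi=\arg g_m(x)$ (we may assume $g_m(x)\neq 0$, otherwise there is nothing to prove); its $m$-th coefficient equals $|g_m(x)|$ and its sup-norm over $[-1,1]$ is at most $\|P_x\|_{[-1,1]}$. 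This gives $|g_m(x)|\le(1+\sqrt{2})^d\|g\|_{\Omega_2^n}$, and taking the supremum over $x\in\Omega_2^n$ completes the argument.

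The argument is short, and there is no genuine obstacle; the two small points to get right are (i) that the supremum of a multilinear polynomial over the solid cube $[-1,1]^n$ is attained at a vertex, so restricting the dilation parameter to $t\in[-1,1]$ loses nothing, and (ii) the passage from complex to real coefficients in Markov's inequality via a rotation. If one wished to prove the full statement of \cite{dmp19} for general (not necessarily low-degree) $g$ the dilation trick would fail, but under the standing hypothesis $\deg g\le d$ it works verbatim.
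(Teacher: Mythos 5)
Your proof is correct. The paper itself gives no argument for Proposition~\ref{prop:riesz proj}, deferring to \cite{dmp19}, and your argument is essentially the standard one used there: restrict to the line $t\mapsto g(tx)$, note that multilinearity gives both $\|g\|_{[-1,1]^n}=\|g\|_{\Omega_2^n}$ (vertex maximization, since $|g|$ is convex in each coordinate) and the identification of $g_m(x)$ as the $t^m$-coefficient, then apply the same Markov coefficient bound $|a_m|\le M_{m,d}\|p\|_{[-1,1]}\le(1+\sqrt2)^d\|p\|_{[-1,1]}$ already quoted in the proof of Lemma~\ref{lem:polarization}, with the rotation $\mathrm{Re}(e^{-i\phi}P_x)$ correctly handling complex values.
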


		\begin{definition}[Support-homogeneous polynomials]
			A polynomial $f:\Om_K^n\to\C$ is $\ell$-\emph{support-homogeneous} if it can be written as
			\[f(z) = \sum_{\alpha:|\supp(\alpha)|=\ell}a_\alpha z^\alpha\,.\]
		\end{definition}
		
		We will employ a certain operator that removes monomials whose support sizes are not maximal and alters the coefficients of the remaining terms (those of maximal support size) in a controlled way.
		
		\begin{definition}[Maximal support pseudo-projection]
			Fix $\xi\in \Om_K\backslash\{1\}$.
			For any multi-index $\alpha\in\{0,1,\ldots, K-1\}^n$ we define the factor
			\begin{equation}
				\tau_\alpha^{(\xi)} =\prod_{j: \alpha_j\neq 0}(1-\xi^{\alpha_j})\,.
				\label{eq:tau}
			\end{equation}
			For any polynomial with maximal support size $\ell\ge 0$,
			$$f(z)=\sum_{|\supp(\alpha)|\le \ell}a_\alpha z^\alpha,$$
			on $\Omega_K^n$, we define $\mathfrak{D}_{\xi} f:\Omega_K^n\times\Omega_2^n\to \C$ via
			\[\mathfrak{D}_{\xi}f(z,x) =\sum_{|\supp(\alpha)|= \ell}\tau_\alpha^{(\xi)}\,a_\alpha z^\alpha x^{\supp(\alpha)}\,.\]
		\end{definition}
		
		Denote by $\vec{1}=(1,\dots, 1)$ the vector in $\Omega_2^n$ that has all entries $1$.  Note that $\mathfrak{D}_{\xi}f(\cdot,\vec 1)$ is the $\ell$-support-homogeneous part of $f$, except where the coefficients $a_\alpha$ have picked up the factor $\tau_\alpha^{(\xi)}$.
		And we note the relationships among the $\tau_\alpha^{(\xi)}$'s can be quite intricate; while in general they are different for distinct $\alpha$'s, this is not always true:
		consider the case of $K=3$ and the two monomials
		\begin{equation}
			\label{ex:same-taus}
			z^{\beta}:=z_1^2 z_2z_3z_4z_5z_6z_7z_8,\quad z^{\beta'}:=z_1^2 z_2^2 z_3^2 z_4^2 z_5^2 z_6^2 z_7^2z_8\,.
		\end{equation}
		Then
		\[
		\tau_{\beta}^{(\omega)}\;=\;(1-\omega)^7(1-\omega^2)\;=\;(1-\omega)(1-\omega^2)^7\;=\;\tau^{(\omega)}_{\beta'},
		\]
		which follows from the identity $(1-\omega)^6=(1-\omega^2)^6$ for $\omega=e^{\frac{2\pi i}{3}}$.
		
		\medskip
		
		The somewhat technical definition of $\mathfrak{D}_{\xi}$ is motivated by the proof of its key property, namely that it is bounded from $L^\infty(\Omega_K^n)$ to $L^\infty({\Om_K^n\times\Om_2^n})$ with a dimension-free constant when restricted to low-degree (actually low support size) polynomials. 
		
		\begin{proposition}[Dimension-free boundedness of $\mathfrak{D}_{\xi}$]\label{prop:bdd of d}
			
			Let $f:\Omega_K^n\to\C$ be a polynomial of degree at most $d$ and $\ell$ be the maximum support size of monomials in $f$.
			Then for all $\xi \in \Omega_K\backslash\{1\}$,
			\begin{equation}
				\label{ineq:main ingredient}
				\|\mathfrak{D}_{\xi}f\|_{\Om_K^n\times\Om_2^n}\le (2+2\sqrt{2})^{\ell} \|f\|_{\Omega_K^n}.
			\end{equation}
		\end{proposition}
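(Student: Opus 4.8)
The plan is to realize $\mathfrak{D}_{\xi}f$ as a constant multiple of the top-degree homogeneous part, in the $x$-variables, of $f$ precomposed with a well-chosen two-point substitution, and then to extract that top part by the Riesz-projection estimate of Proposition \ref{prop:riesz proj}, which conveniently supplies a constant depending only on the support size $\ell$.

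First I would fix $z\in\Omega_K^n$ and, for each $x\in\Omega_2^n$, introduce the point $w=w(z,x)$ with coordinates
$$w_j:=\frac{1+\xi}{2}\,z_j+\frac{1-\xi}{2}\,z_j\,x_j,\qquad 1\le j\le n.$$
When $x_j=1$ this equals $z_j$ and when $x_j=-1$ it equals $\xi z_j$, so in either case $w_j\in\{z_j,\xi z_j\}\subset\Omega_K$; hence $w(z,x)\in\Omega_K^n$, and therefore $|f(w(z,x))|\le\|f\|_{\Omega_K^n}$ for all $z$ and all $x$. This is the one genuine idea of the argument: the perturbation never leaves $\Omega_K^n$, so no "generalized maximum modulus principle" is needed.

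Next I would expand $f$ at $w$. Since $w_j^{\alpha_j}=z_j^{\alpha_j}\bigl(\frac{1+\xi^{\alpha_j}}{2}+\frac{1-\xi^{\alpha_j}}{2}x_j\bigr)$ for $\alpha_j\neq 0$ and $w_j^{\alpha_j}=z_j^{\alpha_j}$ for $\alpha_j=0$, one obtains
$$w^\alpha=z^\alpha\prod_{j:\alpha_j\neq 0}\Bigl(\frac{1+\xi^{\alpha_j}}{2}+\frac{1-\xi^{\alpha_j}}{2}x_j\Bigr),$$
so that $x\mapsto f(w(z,x))$ is, for each fixed $z$, a polynomial on $\Omega_2^n$ of degree at most $\ell$ (the maximal support size of $f$). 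Multiplying out the product above, the monomial in $x$ of top degree $|\supp(\alpha)|$ carries coefficient $2^{-|\supp(\alpha)|}\tau_\alpha^{(\xi)}z^\alpha$ with $\tau_\alpha^{(\xi)}$ as in \eqref{eq:tau}, while every $\alpha$ with $|\supp(\alpha)|<\ell$ contributes only to $x$-degrees strictly below $\ell$. Collecting the $\ell$-homogeneous part in $x$ therefore gives exactly $2^{-\ell}\,\mathfrak{D}_{\xi}f(z,\cdot)$.

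Finally I would apply Proposition \ref{prop:riesz proj} to $g=f(w(z,\cdot))$ (with $\ell$ in the role of $d$, and $m=\ell$), which yields
$$\Bigl\|\tfrac{1}{2^\ell}\,\mathfrak{D}_{\xi}f(z,\cdot)\Bigr\|_{\Omega_2^n}\le(1+\sqrt{2})^{\ell}\,\|f(w(z,\cdot))\|_{\Omega_2^n}\le(1+\sqrt{2})^{\ell}\,\|f\|_{\Omega_K^n}.$$
Multiplying by $2^\ell$ and taking the supremum over $z\in\Omega_K^n$ gives $\|\mathfrak{D}_{\xi}f\|_{\Omega_K^n\times\Omega_2^n}\le(2+2\sqrt{2})^{\ell}\|f\|_{\Omega_K^n}$. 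I expect the step requiring the most care is the bookkeeping in the third paragraph: checking that the degree-$\ell$ part in the $x$-variables picks out precisely the maximal-support monomials of $f$ with the weights $\tau_\alpha^{(\xi)}$, and that the lower-support monomials are killed (the case $\ell=0$ being trivial, since then $\mathfrak{D}_{\xi}f$ is just the constant term of $f$). Once this identity is established, Proposition \ref{prop:riesz proj} does all of the analytic work and automatically produces a constant that is dimension-free and depends only on $\ell$.
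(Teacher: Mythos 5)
Your proof is correct and follows essentially the same route as the paper: your substitution $w_j=z_j\bigl(\tfrac{1+\xi}{2}+\tfrac{1-\xi}{2}x_j\bigr)$ is just the paper's restriction of $f$ to $\{1,\xi\}^n$ composed with the group translation by $z$, which the paper instead inserts at the very end via the translation invariance of $\|\cdot\|_{\Omega_K^n}$. The bookkeeping of the top $x$-homogeneous part carrying $2^{-\ell}\tau_\alpha^{(\xi)}z^\alpha$ and the application of Proposition \ref{prop:riesz proj} with constant $(1+\sqrt{2})^{\ell}$ coincide with the paper's argument.
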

		\begin{proof}
			Consider the operator $G_{\xi}$:
			\begin{equation*}
				G_{\xi}(f)(x)
				=f\left(\frac{1+\xi}{2}+\frac{1-\xi}{2}x_1,\dots, \frac{1+\xi}{2}+\frac{1-\xi}{2}x_n\right),\qquad x\in\Omega_2^n
			\end{equation*}
			that maps any function $f:\{1,\xi\}^n\subset \Om_K^n\to\C$ to a function $G_{\xi}(f):\Omega_2^n\to\C$.
			Then by definition
			\begin{equation}\label{ineq:G compare K prime}
				\|f\|_{\Om_K^n}\ge 	\|f\|_{\{1,\xi\}^n}=\|G_{\xi}(f)\|_{\Omega_2^n}. 
			\end{equation}
			Fix $m\le \ell$. For any $\alpha$ we denote 
			\begin{equation*}
				m_k(\alpha):=|\{j:\alpha_j=k\}|,\qquad 0\le k\le K-1.
			\end{equation*}
			Then for $\alpha$ with $|\supp(\alpha)|=m$, we have
			\begin{equation}
				m_1(\alpha)+\cdots+m_{K-1}(\alpha)=|\supp(\alpha)|=m.
			\end{equation}
			For $z\in \{1,\xi\}^n$ with $z_j=\frac{1+\xi}{2}+\frac{1-\xi}{2}x_j,$ $x_j=\pm 1$, note that 
			\begin{equation*}
				z_j^{\alpha_j}=\left(\frac{1+\xi}{2}+\frac{1-\xi}{2}x_j \right)^{\alpha_j}=\frac{1+\xi^{\alpha_j}}{2}+\frac{1-\xi^{\alpha_j}}{2}x_j\,.
			\end{equation*}
			So for any $A\subset [n]$ with $|A|=m$, and for each $\alpha$ with $\supp(\alpha)=A$, we have for $z\in \{1,\xi\}^n$:
			\begin{align*}
				z^{\alpha}
				=\prod_{j:\alpha_j\neq 0}z_j^{\alpha_j}
				=\prod_{j:\alpha_j\neq 0}\left(\frac{1+\xi^{\alpha_j}}{2}+\frac{1-\xi^{\alpha_j}}{2}x_j\right)
				=\prod_{j:\alpha_j\neq 0}\left(\frac{1-\xi^{\alpha_j}}{2}\right)\cdot x_A+\cdots
				=2^{-m}\tau_\alpha^{(\xi)} x_A+\cdots
			\end{align*}
			where $x_A$ is of degree $|A|=m$ while $\cdots$ is of degree $<m$. Then for $f(z)=\sum_{|\supp(\alpha)|\le \ell}a_{\alpha}z^\alpha$ 
			\begin{align*}
				G_{\xi}(f)(x)
				=\sum_{m\le \ell}\frac{1}{2^m}\sum_{|A|=m}\left(\sum_{ \supp(\alpha)=A} a_{\alpha}\tau_\alpha^{(\xi)}\right)
				x_A+\cdots,\qquad x\in \Omega_2^n.
			\end{align*}
			Again, for each $m\le \ell$, $\cdots$ is some polynomial of degree $<m$. So $G_{\xi}(f)$ is of degree $\le \ell$ and the $\ell$-homogeneous part is nothing but 
			\begin{equation*}
				\frac{1}{2^\ell}\sum_{|A|=\ell}\left(\sum_{ \supp(\alpha)=A} \tau_\alpha^{(\xi)}a_{\alpha}\right)
				x_A.
			\end{equation*}
			Consider the projection operator $Q$ that maps any polynomial on $\Omega_2^n$ onto its highest level homogeneous part; \emph{i.e.}, for any polynomial $g:\Omega_2^n\to\C$ with $\deg(g)=m$ we denote $Q(g)$ its $m$-homogeneous part.
			Then we just showed that 
			\begin{align*}
				Q(G_{\xi}(f))(x)	
				=\frac{1}{2^\ell}\sum_{|A|=\ell}\left(\sum_{ \supp(\alpha)=A} \tau_\alpha^{(\xi)}a_{\alpha}\right)
				x_A.
			\end{align*}
			According to Proposition \ref{prop:riesz proj} and \eqref{ineq:G compare K prime}, we have
			\begin{equation*}
				\|Q(G_{\xi}(f))\|_{\Omega_2^n}\le (1+\sqrt{2})^{\ell}\|G_{\xi}(f)\|_{\Omega_2^n}
				\le (1+\sqrt{2})^{\ell} \|f\|_{\Omega_K^n}
			\end{equation*}
			and thus 
			\begin{align*}
				\left\| \sum_{|A|=\ell}\left(\sum_{ \supp(\alpha)=A} \tau_\alpha^{(\xi)} a_{\alpha}\right)
				x_A\right\|_{\Omega_2^n}
				\le(2+2\sqrt{2})^{\ell}\|f\|_{\Omega_K^n}.
			\end{align*}
			
			The left-hand side is almost $\mathfrak{D}_{\xi}f$.
			Recall that $\Omega_K^n$ is a group, so we have 
			\begin{equation*}
				\sup_{z,\zeta\in\Om_K^n}\left|\sum_{\alpha}a_{\alpha}z^{\alpha}\zeta^{\alpha}\right|
				=\sup_{z\in\Om_K^n}\left|\sum_{\alpha}a_{\alpha}z^{\alpha}\right|.
			\end{equation*}
			Thus actually we have shown
			\begin{equation}
				\sup_{z\in\Om_K^n,x\in \Omega_2^n}\left| \sum_{|A|=\ell}\left(\sum_{ \supp(\alpha)=A} \tau^{(\xi)}_\alpha a_{\alpha}z^{\alpha}\right)
				x_A\right|	\le (2+2\sqrt{2})^{\ell} \|f\|_{\Omega_K^n},
			\end{equation}
			which is exactly \eqref{ineq:main ingredient}.
		\end{proof}

		The second set of variables in $\mathfrak{D}_{\xi}f$ are key to proving the support-homogeneous case of the cyclic Bohnenblust--Hille inequalities.
		
		\begin{lemma}[Support-homogeneous cyclic Bohnenblust--Hille inequalities]
			\label{lem:support-homo-BH}
			Let $f_\ell:\Om_K^n\to\C$ be an $\ell$-support-homogeneous polynomial of degree at most $d$.
			Then
			\[\|\widehat{f_\ell}\|_{\frac{2d}{d+1}}\leq C_{K}^d\|f_\ell\|_{\Om_K^n}.\]
		\end{lemma}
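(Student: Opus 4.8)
The plan is to transfer $f_\ell$ to the product domain $\Om_K^n\times\Om_2^n$ via the maximal support pseudo-projection $\mathfrak{D}_{\xi}$, apply the homogeneous Boolean Bohnenblust--Hille inequality \eqref{ineq:homo bh} in the Boolean variables, and then eliminate the $\Om_K^n$-variables using Parseval's identity together with a moment comparison inequality on $\Om_K^n$.

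Concretely, I would fix $\xi=\om=e^{2\pi i/K}$ and first note that for any $\alpha$ occurring in $f_\ell$ one has $0<\alpha_j<K$ on $\supp(\alpha)$, hence $\xi^{\alpha_j}\ne 1$ and $|1-\xi^{\alpha_j}|=2|\sin(\pi\alpha_j/K)|\ge 2\sin(\pi/K)>0$, so that $|\tau_\alpha^{(\xi)}|\ge(2\sin(\pi/K))^{\ell}$. Since $f_\ell$ is $\ell$-support-homogeneous its maximal support size is $\ell$, so
\[\mathfrak{D}_{\xi}f_\ell(z,x)=\sum_{|\supp(\alpha)|=\ell}\tau_\alpha^{(\xi)} a_\alpha z^\alpha x^{\supp(\alpha)},\qquad (z,x)\in\Om_K^n\times\Om_2^n,\]
and Proposition \ref{prop:bdd of d} gives $\|\mathfrak{D}_{\xi}f_\ell\|_{\Om_K^n\times\Om_2^n}\le(2+2\sqrt2)^{\ell}\|f_\ell\|_{\Om_K^n}\le(2+2\sqrt2)^{d}\|f_\ell\|_{\Om_K^n}$. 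Writing $p=\frac{2d}{d+1}$ and $\|\widehat{\mathfrak{D}_{\xi}f_\ell}\|_p=\big(\sum_\alpha|\tau_\alpha^{(\xi)} a_\alpha|^p\big)^{1/p}$, the lower bound on $|\tau_\alpha^{(\xi)}|$ gives $\|\widehat{f_\ell}\|_p\le(2\sin(\pi/K))^{-d}\|\widehat{\mathfrak{D}_{\xi}f_\ell}\|_p$, so it remains to control $\|\widehat{\mathfrak{D}_{\xi}f_\ell}\|_p$.

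Next I would group the monomials of $\mathfrak{D}_{\xi}f_\ell$ by their support: put $b_A(z):=\sum_{\supp(\alpha)=A}\tau_\alpha^{(\xi)} a_\alpha z^\alpha$ for $|A|=\ell$, so that $\mathfrak{D}_{\xi}f_\ell(z,x)=\sum_{|A|=\ell}b_A(z)x^A$. For each fixed $z$ this is $\ell$-homogeneous in $x\in\Om_2^n$ with Fourier coefficients $(b_A(z))_{|A|=\ell}$, so \eqref{ineq:homo bh} with degree $\ell$ yields $\big(\sum_{|A|=\ell}|b_A(z)|^{\frac{2\ell}{\ell+1}}\big)^{\frac{\ell+1}{2\ell}}\le\bh^{=\ell}_{\{\pm1\}}\|\mathfrak{D}_{\xi}f_\ell\|_{\Om_K^n\times\Om_2^n}$ for every $z$. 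Because $\ell\le d$ we have $\frac{2\ell}{\ell+1}\le p$, hence $\sum_{|A|=\ell}|b_A(z)|^{p}\le(\bh^{=\ell}_{\{\pm1\}})^{p}\|\mathfrak{D}_{\xi}f_\ell\|_{\Om_K^n\times\Om_2^n}^{p}$ for every $z$; averaging over $z\in\Om_K^n$ gives $\sum_{|A|=\ell}\E_z|b_A(z)|^{p}\le(\bh^{=\ell}_{\{\pm1\}})^{p}\|\mathfrak{D}_{\xi}f_\ell\|_{\Om_K^n\times\Om_2^n}^{p}$. Finally, for a fixed $A$ the monomials $z^\alpha$ with $\supp(\alpha)=A$ are distinct characters of $\Z_K^{A}$, so Parseval gives $\sum_{\supp(\alpha)=A}|\tau_\alpha^{(\xi)} a_\alpha|^2=\E_z|b_A(z)|^2$; combining Hölder over the at most $(K-1)^{\ell}$ coefficients with a moment comparison inequality on $\Om_K^n$ for polynomials of degree $\le d$ (a dimension-free consequence of hypercontractivity on $\Z_K$, with a constant at most $C_K^{d}$) gives $\big(\sum_{\supp(\alpha)=A}|\tau_\alpha^{(\xi)} a_\alpha|^{p}\big)^{1/p}\le C_K^{d}\big(\E_z|b_A(z)|^{p}\big)^{1/p}$. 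Summing $p$-th powers over $A$ and chaining the above with $\bh^{=\ell}_{\{\pm1\}}\le\bh^{\le d}_{\{\pm1\}}\le C^{\sqrt{d\log d}}$ from the Boolean Bohnenblust--Hille theorem produces $\|\widehat{f_\ell}\|_p\le C_K^{d}\|f_\ell\|_{\Om_K^n}$ once all the bounded factors are absorbed.

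The hard part is already done for us: it is Proposition \ref{prop:bdd of d}, whose whole point is to keep the $\Om_K^n$-variables alive while recording the support structure in the auxiliary Boolean variables $x$, and it is there that the low-degree (low support size) hypothesis is genuinely exploited. Everything downstream is a matter of assembling the homogeneous Boolean inequality, Parseval, and hypercontractivity on $\Om_K^n$; the only subtlety is the bookkeeping needed to keep the final constant of the form $C_K^{d}$ rather than $C_K^{d^2}$ — this works because each of the four contributing factors (the pseudo-projection bound, the lower bound on the $\tau_\alpha^{(\xi)}$, the Boolean BH constant, and the hypercontractivity constant) is only $C_K^{d}$, and because we average over $z$ rather than take a supremum, so the order of $\sup$ and average is never interchanged.
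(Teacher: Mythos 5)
Your proposal is correct and follows essentially the same route as the paper: apply $\mathfrak{D}_{\omega}$ and Proposition \ref{prop:bdd of d}, use the Boolean Bohnenblust--Hille inequality in the auxiliary variables $x$ pointwise in $z$, average over $z\in\Omega_K^n$, recover the coefficients within each fixed support $A$, and finally divide out the factors $\tau_\alpha^{(\omega)}$ using $|\tau_\alpha^{(\omega)}|\ge(2\sin(\pi/K))^{\ell}$. The only deviation is the coefficient-recovery step: where you invoke Parseval on $\Omega_K^n$, an $\ell^p$-versus-$\ell^2$ H\"older bound over the at most $(K-1)^{\ell}$ multi-indices with a given support, and hypercontractive moment comparison on $\Omega_K^n$, the paper gets the same conclusion more cheaply from the trivial bound $|\tau_\alpha^{(\omega)} a_\alpha|\le\E_{z}\bigl|\sum_{\supp(\beta)=A}\tau_\beta^{(\omega)} a_\beta z^{\beta}\bigr|$ followed by Jensen and the same $(K-1)^{d}$ counting factor, so no hypercontractivity on cyclic groups (an ingredient external to the paper, though standard and quantitatively harmless here) is ever needed; both variants yield the stated $C_K^{d}$.
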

		\begin{proof}
			Let $f_\ell(z) = \sum_{|\supp(\alpha)|=\ell}a_\alpha z^\alpha$. For any $z\in \Omega_K^n$,
			apply the Boolean Bohnenblust--Hille inequality \eqref{ineq:boolean bh} for $\Omega_2^n$ to $x\mapsto \mathfrak{D}_{\omega}f_\ell(z,x)$ and  \eqref{ineq:main ingredient} for $(f;\xi)=(f_\ell;\omega)$ to obtain
			\begin{equation*}
				\sum_{|A|=\ell}\left|\sum_{ \supp(\alpha)=A} \tau^{(\omega)}_\alpha a_{\alpha}z^{\alpha}\right|^{\frac{2d}{d+1}}
				\le\left[ \BH^{\le d}_{\Om_2}(2+2\sqrt{2})^{\ell} \|f_\ell\|_{\Omega_K^n}\right]^{\frac{2d}{d+1}}.
			\end{equation*}
			Taking the expectation with respect to the Haar measure over $z\sim \Omega_K^n$, we get
			\begin{equation}\label{ineq:expectation}
				\sum_{|A|=\ell}\E_{z\sim  \Omega_K^n}\left|\sum_{ \supp(\alpha)=A} \tau^{(\omega)}_\alpha a_{\alpha}z^{\alpha}\right|^{\frac{2d}{d+1}}
				\le\left[ \BH^{\le d}_{\Om_2}(2+2\sqrt{2})^{\ell} \|f_\ell\|_{\Omega_K^n}\right]^{\frac{2d}{d+1}}.
			\end{equation}
			Note that for fixed $A\subset[n]$ with $|A|=\ell$ and for each $\alpha$ with $\supp(\alpha)=A$
			\begin{align*}
				\big|\tau^{(\omega)}_\alpha a_{\alpha}\big|^{\frac{2d}{d+1}}
				\le  \left(\E_{z\sim  \Omega_K^n}\left|\sum_{ \supp(\alpha)=A} \tau^{(\omega)}_\alpha a_{\alpha}z^{\alpha}\right|\right)^{\frac{2d}{d+1}}
				\le \E_{z\sim  \Omega_K^n}\left|\sum_{ \supp(\alpha)=A} \tau^{(\omega)}_\alpha a_{\alpha}z^{\alpha}\right|^{\frac{2d}{d+1}},
			\end{align*}
			where the first inequality uses Hausdorff--Young inequality and the second inequality follows from H\"older's inequality. The number of all such $\alpha$'s is bounded by (recalling $A$ is fixed)
			\begin{equation*}
				|\{\al: \supp(\al) =A\}| \le (K-1)^\ell\le (K-1)^d,
			\end{equation*}
			thus 
			\begin{align*}
				\sum_{ \supp(\alpha)=A}|\tau^{(\omega)}_\alpha|^{\frac{2d}{d+1}}\cdot |a_{\alpha}|^{\frac{2d}{d+1}}
				\;\le\; (K-1)^d\E_{z\sim  \Omega_K^n}\left|\sum_{ \supp(\alpha)=A} \tau^{(\omega)}_\alpha  a_{\alpha}z^{\alpha}\right|^{\frac{2d}{d+1}}\,.
			\end{align*}
			This, together with  \eqref{ineq:expectation} and the fact that $\BH^{\le d}_{\Omega_2}\le C^d$, yields
			\begin{align*}
				\left(\sum_{|A|=\ell}\sum_{ \supp(\alpha)=A}	|\tau^{(\omega)}_\alpha|^{\frac{2d}{d+1}}\cdot |a_{\alpha}|^{\frac{2d}{d+1}}\right)^{\frac{d+1}{2d}}
				\le C_K^d \|f_\ell\|_{\Omega_K^n},
			\end{align*}
			for some $C_K>0$. Now we bound $|\tau^{(\omega)}_\alpha|$ from below with
			\begin{equation*}
				\big|\tau^{(\omega)}_\alpha\big|=\prod_{j:\alpha_j\neq 0}|1-\omega^{\alpha_j}|\ge |1-\omega|^{\ell}=(2\sin(\pi/K))^\ell\ge (\sin(\pi/K))^d>0\,,
			\end{equation*}
			and therefore, 
			\begin{align*}
				\left(\sum_{|A|=\ell}\sum_{ \supp(\alpha)=A}	|a_{\alpha}|^{\frac{2d}{d+1}}\right)^{\frac{d+1}{2d}}
				\le\left( \frac{C_K}{\sin(\pi/K)}\right)^d \|f_\ell\|_{\Omega_K^n}
				=C_K^d\|f_\ell\|_{\Omega_K^n}. \tag*{\qedhere}
			\end{align*}
		\end{proof}
		
		With the support-homogeneous Cyclic Bohnenblust--Hille inequality proved, it remains to reduce the proof of Bohnenblust--Hille inequalities \eqref{ineq: bh cyclic} from the general case to the support-homogeneous case.
		This is done by proving the following lemma as an analog of Proposition \ref{prop:riesz proj} for general cyclic groups $\Omega_K^n$.
		
		\begin{lemma}[Splitting Lemma]
			\label{lem:splitting-lemma}
			Let $f:\Om_K^n\to \C$ be a polynomial of degree at most $d$ and for $0\le j\le d$ let $f_j$ be its $j$-support-homogeneous part.
			Then for all $0\le j\le d$, 
			\begin{equation}\label{ineq:splitting lemma}
				\|f_j\|_{\Om_K^n}\leq C(d,\go)\|f\|_{\Om_K^n}
			\end{equation}
			where $C(d,\go)$ is a constant independent of $n$.
			Moreover, for $K$ prime we may take $C(d,\go)$ to be $C_K^{d^2}$.
		\end{lemma}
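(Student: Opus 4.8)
The plan is to reduce the lemma to a single estimate for the top layer: if $g:\Om_K^n\to\C$ has maximal support size $\ell$ (necessarily $\ell\le d$) and $g_\ell$ denotes its $\ell$-support-homogeneous part, then
\[
\|g_\ell\|_{\Om_K^n}\le (2+2\sqrt2)^{\ell}\,\|g\|_{\Om_K^n}.
\]
Granting this, one peels from the top: with $\ell$ the maximal support size of $f$, the estimate bounds $\|f_\ell\|$; then $f-f_\ell$ has maximal support size $\le\ell-1$ and norm $\le(1+(2+2\sqrt2)^\ell)\|f\|$, while $(f-f_\ell)_j=f_j$ for $j<\ell$, so iterating on $f-f_\ell$, $f-f_\ell-f_{\ell-1}$, $\dots$ controls $f_{\ell-1},f_{\ell-2},\dots,f_0$ in turn. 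The constants multiply, yielding $\|f_j\|_{\Om_K^n}\le C(d,K)\|f\|_{\Om_K^n}$ with $C(d,K)\le\prod_{m=0}^{d}\bigl(1+(2+2\sqrt2)^m\bigr)\le C^{d^2}$ for a universal $C$; in particular one may take $C(d,K)=C_K^{d^2}$. This telescoping is the cyclic-group counterpart of how Proposition \ref{prop:riesz proj} is used in the Boolean setting.

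For the top-layer estimate I would marry the restriction idea behind $\mathfrak{D}_\xi$ with an averaging argument, the decisive point being to allow a \emph{different} root of unity in each coordinate. Write $g(z)=\sum_{|\supp(\alpha)|\le\ell}a_\alpha z^\alpha$. For $\vec\xi=(\xi_1,\dots,\xi_n)\in\Om_K^n$, restrict $g$ to the product of two-point sets $\prod_k\{1,\xi_k\}$ and set $z_k=\tfrac{1+\xi_k}2+\tfrac{1-\xi_k}2 x_k$, $x\in\Om_2^n$; exactly as in the proof of Proposition \ref{prop:bdd of d} this produces $g_{\vec\xi}:\Om_2^n\to\C$ of degree $\le\ell$ with $\|g_{\vec\xi}\|_{\Om_2^n}\le\|g\|_{\Om_K^n}$, whose degree-$\ell$ part equals $2^{-\ell}\sum_{|\supp(\alpha)|=\ell}\bigl(\prod_{k\in\supp(\alpha)}(1-\xi_k^{\alpha_k})\bigr)a_\alpha x^{\supp(\alpha)}$. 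Applying the Boolean Riesz-projection bound of Proposition \ref{prop:riesz proj} (with $d$ replaced by $\ell$) and then the translation step of the proof of Proposition \ref{prop:bdd of d}, namely replacing $g$ by $z\mapsto g(\zeta z)$ (same norm, Fourier coefficients $a_\alpha\zeta^\alpha$) and taking the supremum over $\zeta\in\Om_K^n$, one gets, for every $\vec\xi\in\Om_K^n$,
\[
\Bigl\|\sum_{|\supp(\alpha)|=\ell}\Bigl(\prod_{k\in\supp(\alpha)}(1-\xi_k^{\alpha_k})\Bigr)a_\alpha z^\alpha x^{\supp(\alpha)}\Bigr\|_{\Om_K^n\times\Om_2^n}\le(2+2\sqrt2)^{\ell}\,\|g\|_{\Om_K^n}.
\]
Now average over $\vec\xi$ against the uniform measure on $\Om_K^n$. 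Since $\alpha_k\in\{1,\dots,K-1\}$ for $k\in\supp(\alpha)$ we have $\E_{\xi}[\xi^{\alpha_k}]=0$, hence $\E_{\xi}[1-\xi^{\alpha_k}]=1$, and by independence $\E_{\vec\xi}\bigl[\prod_{k\in\supp(\alpha)}(1-\xi_k^{\alpha_k})\bigr]=1$ for every $\alpha$; so the average of the bracketed polynomial is $\sum_{|\supp(\alpha)|=\ell}a_\alpha z^\alpha x^{\supp(\alpha)}$, and the triangle inequality keeps its $\Om_K^n\times\Om_2^n$-norm $\le(2+2\sqrt2)^{\ell}\|g\|_{\Om_K^n}$. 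Specializing $x=\vec 1$ gives precisely $\|g_\ell\|_{\Om_K^n}\le(2+2\sqrt2)^{\ell}\|g\|_{\Om_K^n}$.

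The main obstacle — and the reason a single global $\xi$, as in the definition of $\mathfrak{D}_\xi$, is not enough — is the twist $\prod_{k\in\supp(\alpha)}(1-\xi_k^{\alpha_k})$: with one $\xi$ this factor depends on the multiset of exponents of $\alpha$ in an uncontrolled way, and no fixed linear combination of the $\mathfrak{D}_\xi g$'s can cancel it for all $\alpha$ of a given support size simultaneously. Letting the $\xi_k$ vary independently turns the twist into a product of one-variable factors, each of which averages to $1$ over $\Om_K$. The one bookkeeping point that needs attention is to carry the monomials $z^\alpha$ along (via the translation step) rather than only the collapsed Boolean polynomial, since the averaging applied to the latter would recover $\sum_{|A|=\ell}\bigl(\sum_{\supp(\alpha)=A}a_\alpha\bigr)x^A$ instead of $g_\ell$ itself. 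Finally, the argument uses only the Boolean inputs of Propositions \ref{prop:riesz proj} and \ref{prop:bdd of d} together with $\E_\xi[\xi^r]=0$ for $r\not\equiv 0\pmod K$, so it goes through verbatim for every $K\ge 3$, prime or not.
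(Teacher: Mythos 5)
Your proof is correct, but it takes a genuinely different route at the decisive step. The paper also reduces to a ``top-layer'' estimate and then peels support-homogeneous parts by induction exactly as you do, but to kill the twist $\tau_\alpha^{(\xi)}$ it keeps a \emph{single} $\xi$ and iterates $\mathfrak{D}_{\omega},\mathfrak{D}_{\omega^2},\dots,\mathfrak{D}_{\omega^{K-1}}$, so that the accumulated factor $\prod_{k=1}^{K-1}\tau_\alpha^{(\omega^k)}$ becomes $d_K^{\ell}$ with $d_K=\prod_{k=1}^{K-1}(1-\omega^k)$; making that factor independent of $\alpha$ is exactly where primality of $K$ enters (multiplication by $j$ permutes $\Z_K\setminus\{0\}$), and it is why the paper's quantitative bound $C_K^{d^2}$ is stated only for prime $K$, the general case being deferred to \cite{svz24bh}. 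You instead allow an independent $\xi_k\in\Om_K$ in each coordinate and average over $\vec\xi$: since each factor $1-\xi_k^{\alpha_k}$ involves a distinct independent variable and $\E_{\xi\sim\Om_K}[\xi^{r}]=0$ for $K\nmid r$, the twist averages exactly to $1$, and the uniform bound $(2+2\sqrt2)^{\ell}\|g\|_{\Om_K^n}$ survives the averaging. I checked that the proof of Proposition \ref{prop:bdd of d} (restriction to $\prod_k\{1,\xi_k\}$, the per-coordinate identity $z_k^{\alpha_k}=\tfrac{1+\xi_k^{\alpha_k}}{2}+\tfrac{1-\xi_k^{\alpha_k}}{2}x_k$, Proposition \ref{prop:riesz proj}, and the group-translation step carrying $z^\alpha$ along) indeed goes through verbatim with coordinate-dependent $\xi_k$, and your observation that a global $\xi$ cannot work (powers of the same $\xi$ can wrap around mod $K$, e.g.\ $\E_\xi[(1-\xi)(1-\xi^2)]=2$ for $K=3$) is precisely the obstruction the paper's Vandermonde-free primality argument is designed to circumvent. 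What each approach buys: the paper's iteration stays inside the single-$\xi$ operator calculus that is reused later (inseparable parts, Property A of the Remez-type theorem), while your averaging removes the primality hypothesis, treats all $K\ge 3$ at once, and yields the top-layer constant $(2+2\sqrt2)^{\ell}$ independent of $K$, hence an overall constant of the form $C^{d^2}$ with universal $C$, which is stronger than the stated $C_K^{d^2}$. The peeling bookkeeping $\prod_{m\le d}(1+(2+2\sqrt2)^m)\le C^{d^2}$ is also fine.
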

		
		We pause to note Lemmas \ref{lem:support-homo-BH} and \ref{lem:splitting-lemma} together immediately give the full Cyclic Bohnenblust--Hille inequality:
		
		
		\begin{proof}[Proof of Theorem \ref{thm: bh cyclic}]
			For $0\le j\le d$ let $f_j$ be the $j$-support-homogeneous part of $f$. The triangle inequality and our lemmas give
			\begin{align*}
				\|\widehat{f}\|_{\frac{2d}{d+1}}\;\leq\; \sum_{0\le j\le d}\|\widehat{f_j}\|_{\frac{2d}{d+1}}\overset{\text{Lemma \ref{lem:support-homo-BH}}}{\Lesssim{K,d}}\sum_{0\le j\le d}\|f_j\|_{\Omega_K^n}\overset{\text{Lemma \ref{lem:splitting-lemma}}}{\Lesssim{K,d}}\|f\|_{\Omega_K^n}\,.
			\end{align*}
			Estimates of $C(d,\go)$ easily follow from the lemma statements.
		\end{proof}
		

		For simplicity of notation we shall denote $\mathfrak{D}_{\xi}f(\cdot,\vec1)$ by $\mathfrak{D}_{\xi}f(\cdot):\Om_K^n\to\C$ unless otherwise stated.
		Concretely, in what follows we have
		\begin{equation*}
			\mathfrak{D}_{\xi}f(z):=\mathfrak{D}_{\xi}f(z,\vec1)= \sum_{|\supp(\alpha)|=\ell}\tau_\alpha^{(\xi)}\,a_\alpha z^\alpha \,
		\end{equation*}
		for polynomials with largest support size $\ell$: 
		$$
		f(z)=\sum_{|\supp(\alpha)|\le \ell}a_\alpha z^\alpha, \qquad a_{\alpha}\neq 0 \textnormal{ for some } |\supp(\alpha)|= \ell.
		$$
		
		\begin{proof}[Proof of Lemma \ref{lem:splitting-lemma} for prime $K$]
			Let $\ell\le d$ be the largest support size of monomials in $f$. For any $m\le \ell$, we denote by $f_m$ the $m$-support homogeneous part of $f$. Then we shall prove the lemma for $f_\ell$ first.
			
			To show \eqref{ineq:splitting lemma} for $f_\ell$, recall that \eqref{ineq:main ingredient} gives
			\begin{align*}
				\|\mathfrak{D}_{\xi}f\|_{\Om_K^n}=\left\| \sum_{ |\supp(\alpha)|=\ell} \tau_\alpha^{(\xi)}a_{\alpha}z^{\alpha}\right\|_{\Omega_K^n}
				\le (2+2\sqrt{2})^{\ell}\|f\|_{\Omega_K^n}\,.
			\end{align*}
			We would like to replace the polynomial on the left-hand side with $f_\ell$, and the main issue is that the factors
			\begin{equation*}
				\tau_\alpha^{(\xi)}=\prod_{j:\alpha_j\neq 0}\left(1-\xi^{\alpha_j}\right),
			\end{equation*}
			may differ for different $\alpha$'s under consideration as we discussed before. To overcome this difficulty, we rotate $\xi$ over repeated applications of $\mathfrak{D}_{\xi}$.
			This will lead to an accumulated factor that is constant across all monomials in $f_\ell$.
			Begin by considering $\mathfrak{D}_{\omega}f$ to obtain 
			\begin{align*}
				\left\| \sum_{ |\supp(\alpha)|=\ell} \tau_\alpha^{(\omega)}a_{\alpha} z^{\alpha}\right\|_{\Omega_K^n}
				\le (2+2\sqrt{2})^{\ell}\|f\|_{\Omega_K^n}.
			\end{align*}
			Then we apply $\mathfrak{D}_{\omega^2}$ to $\mathfrak{D}_{\omega}f$ to obtain 
			\begin{align*}
				\left\| \sum_{ |\supp(\alpha)|=\ell} \tau_\alpha^{(\omega)}\tau_\alpha^{(\omega^2)}a_{\alpha} z^{\alpha}\right\|_{\Omega_K^n}
				\le (2+2\sqrt{2})^{\ell}\|\mathfrak{D}_{\omega}f\|_{\Omega_K^n}
				\le (2+2\sqrt{2})^{2\ell}\|f\|_{\Omega_K^n}.
			\end{align*}
			We continue iteratively applying $\mathfrak{D}_{\omega^k}$ to $\mathfrak{D}_{\omega^{k-1}}\cdots\mathfrak{D}_{\omega}f$ and finally arrive at
			\begin{align*}
				\left\|\sum_{ |\supp(\alpha)|=\ell} \tau_\alpha^{(\omega)}\cdots\tau_\alpha^{(\omega^{K-1})} a_{\alpha}z^{\alpha}\right\|_{\Omega_K^n}
				\le (2+2\sqrt{2})^{(K-1)\ell}\|f\|_{\Omega_K^n}.
			\end{align*}
			We claim that for any $\alpha$ with $|\supp(\alpha)|=\ell$, the cumulative factor introduced by iterating $\mathfrak{D}_{\omega^k}$'s,
			\begin{equation*}
				\tau_K(\ell):=\prod_{1\le k\le K-1}\tau_\alpha^{(\omega^k)}=	\prod_{1\le k\le K-1}\prod_{j:\alpha_j\neq 0}\left(1-(\omega^k)^{\alpha_j}\right),
			\end{equation*}
			is a nonzero constant depending only on $K$ and $\ell=|\supp(\alpha)|$.
			In fact, it will suffice to argue that
			\begin{equation}\label{eq:c prime}
				\prod_{1\le k\le K-1}\left(1-(\omega^k)^j\right)=:d_K
			\end{equation}
			is some nonzero constant independent of $1\le j\le K-1$.
			To see this is sufficient, recall $m_j(\alpha)=|\{k:\alpha_k=j\}|,1\le j\le K-1$ are such that
			\begin{equation*}
				\sum_{1\le j\le K-1}m_j(\alpha)=|\supp(\alpha)|=\ell.
			\end{equation*}
			Then if \eqref{eq:c prime} holds,
			we have 
			\begin{align*}
				\prod_{1\le k\le K-1}\prod_{j:\alpha_j\neq 0}\left(1-(\omega^k)^{\alpha_j}\right)
				&= \prod_{1\le k\le K-1}\prod_{1\le j\le K-1}\left(1-(\omega^k)^{j}\right)^{m_j(\alpha)}\\
				&= \prod_{1\le j\le K-1}\left[\prod_{1\le k\le K-1}\left(1-(\omega^k)^{j}\right)\right]^{m_j(\alpha)}\\
				&= \prod_{1\le j\le K-1}d_K^{m_j(\alpha)}\\
				&= d_K^\ell
			\end{align*}
			and thus the claim is shown with $\tau_K(\ell)=d_K^\ell$.
			This would yield
			\begin{align*}
				\left\| \sum_{ |\supp(\alpha)|=\ell} a_{\alpha} z^{\alpha}\right\|_{\Omega_K^n}
				\le |d_K|^{-\ell}(2+2\sqrt{2})^{(K-1)\ell}\|f\|_{\Omega_K^n},
			\end{align*}
			as desired.

			Now it remains to verify \eqref{eq:c prime}.
			We remark that until this step we have not used the assumption that $K>2$ is prime.
			Note that \eqref{eq:c prime} is nonzero since otherwise $\omega^{jk}=1$; \emph{i.e.}, $K\mid jk$ for some $1\le k\le K-1$, and this is not possible since $K>2$ is prime. Moreover, again by the primality of $K$, 
			\begin{equation*}
				\{jk: 1\le k\le K-1\}\equiv \{k:1\le k\le K-1\} \mod K, ~1\le j\le K-1.
			\end{equation*}
			Therefore, 
			\begin{equation*}
				\prod_{1\le k\le K-1}\left(1-(\omega^{k})^j\right)=\prod_{1\le k\le K-1}(1-\omega^{k})=:d_K
			\end{equation*}
			is independent of $ 1\le j\le K-1$, which finishes the proof of the claim. 
			
			Now we have shown that for the maximal support-homogeneous part $f_\ell$ of $f$:
			\begin{equation}\label{ineq:top ell}
				\|f_\ell\|_{\Om_K^n}\le C_K^d\|f\|_{\Omega_K^n}
			\end{equation}
			with $C_K=(2+2\sqrt{2})^{K-1}/|d_K|$.
			Repeating the same argument to $f-f_\ell$ whose maximal support-homogeneous part is $f_{\ell-1}$, we get by triangle inequality and \eqref{ineq:top ell} that
			\begin{equation*}
				\|f_{\ell-1}\|_{\Om_K^n}\;\leq\; C_K^d\|f-f_\ell\|_{\Om_K^n}\;
				\overset{\eqref{ineq:top ell}}{\leq}\; C_K^d(1+C_K^{d})\|f\|_{\Om_K^n}.
			\end{equation*}
			Iterating this procedure, we may obtain for all $ 0\le k\le \ell$ that
			\begin{equation}\label{ineq:top ell-k}
				\|f_{\ell-k}\|_{\Om_K^n}\le C_K^d(1+C_K^{d})^{k}\|f\|_{\Omega_K^n}
				=[(1+C_K^d)^{k+1}-(1+C_K^d)^{k}]\|f\|_{\Omega_K^n}.
			\end{equation}
			In fact, we have shown that \eqref{ineq:top ell-k} holds for $k=0,1$. Assume that \eqref{ineq:top ell-k} holds for $0\le j\le k-1$ and let us prove \eqref{ineq:top ell-k} for $k$. Since $f_{\ell-k}$ is the maximal support-homogeneous part of $f-\sum_{0\le j\le k-1}f_{\ell-j}$, we have by the previous argument proving \eqref{ineq:top ell}, the triangle inequality and the induction assumption that
			\begin{align*}
				\|f_{\ell-k}\|_{\Om_K^n}
				\; \le \; & C_K^d\left\|f-\sum_{0\le j\le k-1}f_{\ell-j}\right\|_{\Omega_K^n}\\
				\; \le \; &C_K^d\left(\|f\|_{\Omega_K^n}+\sum_{0\le j\le k-1}\|f_{\ell-j}\|_{\Omega_K^n}\right)\\
				\;\le \;&C_K^d\left(1+\sum_{0\le j\le k-1}\big[(1+C_K^d)^{j+1}-(1+C_K^d)^{j}\big]\right)\|f\|_{\Omega_K^n}\\
				\;=\; &C_K^d(1+C_K^d)^{k}\|f\|_{\Omega_K^n}.
			\end{align*}
			This finishes the proof of \eqref{ineq:top ell-k}. In particular for all $0\le j\le \ell$
			\begin{equation}
				\label{eq:induction-done}
				\|f_{j}\|_{\Om_K^n}\le C_K^d(1+C_K^{d})^{\ell-j}\|f\|_{\Omega_K^n}
				\le C_K^d(1+C_K^d)^{d}\|f\|_{\Omega_K^n}
				\le (2C^2_K)^{d^2}\|f\|_{\Omega_K^n},
			\end{equation}
			which completes the proof of the lemma.
		\end{proof}

		\section{A dimension-free inequality of Remez type}
		
		Inspired by the ideas and techniques in the last section, we may now overcome the obstacle \eqref{ineq:obstacle of convex hull}. We consider the case when $\go\ge 3$ is prime to be consistent with the proof in the last section. We refer to \cite{svz23remez} for more discussion. In the following, we shall prove that 
		
		\begin{theorem}\label{thm:remez cyclic}
			Fix $d\ge 1$ and $\go \ge 3$. For all $n\ge 1$, consider $f:\Omega_\go^n\to \C$ that is of degree at most $d$. Then $f$ extends to $\C^n$ as an analytic polynomial of degree at most $d$ and individual degree at most $\go -1$, and we have 
			\begin{equation}
				\|f\|_{\T^n}\Lesssim{d,\go}\|f\|_{\Om_\go^n}.
			\end{equation}
		\end{theorem}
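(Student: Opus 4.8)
The plan is to reduce Theorem~\ref{thm:remez cyclic} to the \emph{generalized maximum modulus principle} \eqref{ineq:obstacle of convex hull} by a scaling argument, and then to prove \eqref{ineq:obstacle of convex hull} with the machinery of the previous section — the Splitting Lemma~\ref{lem:splitting-lemma} and the pseudo-projection $\mathfrak{D}_{\xi}$, which transfers estimates on $\Omega_\go^n$ to the Boolean cube $\Omega_2^n$. For the first reduction I would use a Bernstein--Walsh-type fact: if $p$ is a one-variable polynomial of degree at most $d$ and $R\ge 1$, then $\sup_{|z|\le R}|p(z)|\le R^{d}\sup_{|z|\le 1}|p(z)|$, which follows by applying the maximum modulus principle to the reversed polynomial $q(z)=z^{d}p(1/z)$ (so that $\sup_{|z|=1}|q|=\sup_{|z|=1}|p|$) and writing $|p(w)|=|w|^{d}|q(1/w)|$ when $|w|=R$. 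Applying this along the complex lines $t\mapsto f(tz)$, $z\in\overline{\D}^n$, gives $\|f\|_{\overline{\D}^n}\le\rho^{-d}\|f\|_{(\rho\,\overline{\D})^n}$ for every $0<\rho\le 1$. Taking $\rho=\cos(\pi/\go)$, the inradius of the regular $\go$-gon inscribed in $\T$ (so that $\rho\,\T\subseteq\conv(\Omega_\go)\subseteq\overline{\D}$), and using that $\|f\|_{\overline{\D}^n}=\|f\|_{\T^n}$ and $\|f\|_{(\rho\,\overline{\D})^n}=\|f\|_{(\rho\,\T)^n}$ by the maximum modulus principle, one gets from \eqref{ineq:obstacle of convex hull}
\begin{equation*}
	\|f\|_{\T^n}=\|f\|_{\overline{\D}^n}\le\cos(\pi/\go)^{-d}\,\|f\|_{(\cos(\pi/\go)\,\T)^n}\le\cos(\pi/\go)^{-d}\,\|f\|_{(\conv\Omega_\go)^n}\Lesssim{d,\go}\|f\|_{\Omega_\go^n}.
\end{equation*}

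To prove \eqref{ineq:obstacle of convex hull} I would first peel off support-homogeneous parts: writing $f=\sum_{j=0}^{d}f_j$ with $f_j$ the $j$-support-homogeneous part, Lemma~\ref{lem:splitting-lemma} gives $\|f_j\|_{\Omega_\go^n}\Lesssim{d,\go}\|f\|_{\Omega_\go^n}$, so by the triangle inequality it suffices to bound $\|f_\ell\|_{(\conv\Omega_\go)^n}$, equivalently $\|f_\ell\|_{\overline{\D}^n}$ (since $\conv(\Omega_\go)\subseteq\overline{\D}$), for an $\ell$-support-homogeneous $f_\ell=\sum_{|\supp(\alpha)|=\ell}a_\alpha z^\alpha$. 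For this I would run the Splitting-Lemma iteration: apply $\mathfrak{D}_{\omega},\mathfrak{D}_{\omega^2},\dots,\mathfrak{D}_{\omega^{\go-1}}$ in turn, so that by \eqref{ineq:main ingredient} the accumulated factors $\prod_{k=1}^{\go-1}\tau_\alpha^{(\omega^k)}=d_\go^{\ell}$ become a monomial-independent constant; then enlarge one auxiliary Boolean variable block — which enters multilinearly with degree at most $\ell$ — from $\Omega_2^n$ to $\overline{\D}^n$ at the cost of only a dimension-free, degree-dependent factor (a Remez-type comparison of $L^\infty(\{-1,1\}^n)$ and $L^\infty(\overline{\D}^n)$ for multilinear polynomials of degree at most $\ell$), set the remaining auxiliary blocks to the all-ones vector, and absorb the disc variable into the $z$-variable so as to sweep out all of $\overline{\D}^n$. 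Monomials of individual degree $\ge 2$, where the linearized auxiliary variables do not directly recover an evaluation of $f_\ell$, I would reduce to the multilinear case by a polarization/Markov-type argument in the spirit of Lemma~\ref{lem:polarization}, now with the Markov constant for the disc (which depends on $d$ and $\go$).

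The hard part is precisely obtaining the \emph{dimension-free} constant in \eqref{ineq:obstacle of convex hull}. Naive tensorization is hopeless, since even in one variable the optimal constant exceeds $1$ (the explicit $\go=3$ computation in the text), so the low-degree hypothesis must be used in a genuinely non-multiplicative way. The delicate point in making the $\mathfrak{D}_{\xi}$-transference do this is that $\mathfrak{D}_{\xi}$ only \emph{linearizes}: it sends $z_j^{\alpha_j}$ to $z_j^{\alpha_j}x_j$ rather than to $(z_jx_j)^{\alpha_j}$, so one cannot simply un-linearize the auxiliary variables back into an evaluation of $f_\ell$ on $(\conv\Omega_\go)^n$. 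Reconciling this forces one to combine the full rotation over $\xi=\omega,\dots,\omega^{\go-1}$ (to collapse the $\tau$-factors, as in Lemma~\ref{lem:splitting-lemma}) with the convexity/Riesz-projection estimate of Proposition~\ref{prop:riesz proj}, the multilinear Remez comparison above, and the polarization step; it is in balancing all of these that the argument of \cite{svz23remez} becomes subtle.
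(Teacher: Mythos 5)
Your opening reduction is fine but it is not new content: the scaling argument showing that \eqref{ineq:obstacle of convex hull} implies $\|f\|_{\T^n}\Lesssim{d,\go}\|f\|_{\Om_\go^n}$ is essentially the computation the paper itself records in the ``obstacle'' section (with $r_\go$ in place of your Bernstein--Walsh step), and since $\conv(\Omega_\go)\subset\overline{\D}$ the two statements are equivalent up to constants. So everything rests on your proposed proof of \eqref{ineq:obstacle of convex hull}, and that is where there is a genuine gap. After applying $\mathfrak{D}_{\omega},\dots,\mathfrak{D}_{\omega^{\go-1}}$ you only control $\sup$ of the linearized object $\sum_{|\supp(\alpha)|=\ell}\tau_\alpha a_\alpha z^\alpha x^{\supp(\alpha)}$ for $z\in\Omega_\go^n$ (and, granting a multilinear Remez comparison, for $x$ in a polydisc). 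To conclude you must convert this into control of $f_\ell$ at points of $(\conv\Omega_\go)^n$, i.e.\ undo the linearization for monomials with $\alpha_j\ge 2$; you defer exactly this to an unspecified ``polarization/Markov-type argument in the spirit of Lemma \ref{lem:polarization}.'' But polarization-type and Markov-type bounds require sup-norm control of $f$ on a convex body (as in Lemma \ref{lem:polarization}, where $\|P\|_{[-1,1]^n}$ appears on the right), and a dimension-free bound of that kind on $(\conv\Omega_\go)^n$ is precisely the statement \eqref{ineq:obstacle of convex hull} you are trying to prove — the argument is circular at its crucial step, and your closing paragraph in effect concedes that this reconciliation has not been carried out. (A secondary issue: the full rotation collapsing $\prod_k\tau_\alpha^{(\omega^k)}$ into $d_\go^\ell$ uses primality of $\go$, whereas the theorem is stated for all $\go\ge 3$.)

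For comparison, the paper's proof does not attempt to un-linearize $\mathfrak{D}_{\xi}$ at all. It splits the claim as $\|f\|_{\T^n}\Lesssim{d,\go}\|f\|_{\Omega_{2\go}^n}\Lesssim{d,\go}\|f\|_{\Omega_\go^n}$. The first inequality (Proposition \ref{prop:Om2K-boundedness}) uses a Cauchy-estimate dilation down to $(\eps_*\T)^n$ together with Lemma \ref{lem:probability measure}: for $|z|\le\eps_*$ one constructs a probability measure on $\Omega_{2\go}$ matching the moments $z^m$, $0\le m\le \go-1$, so that points of the small polydisc are averages over $\Omega_{2\go}^n$. The second inequality reduces, by a group rotation (Proposition \ref{prop:reduction to sqrt of omega}), to bounding $|f(\sqrt{\omega},\dots,\sqrt{\omega})|$, which is then handled by the decomposition into \emph{inseparable} parts: Property A (Proposition \ref{prop:Property A}) controls each inseparable part on $\Omega_\go^n$ via a Vandermonde inversion of iterated $\mathfrak{D}$'s (no primality needed there), and Property B (Proposition \ref{prop:Property B}) uses the identity $(\sqrt{\omega})^k=\iu(1-\omega^k)/|1-\omega^k|$ to show all inseparable monomials agree at $(\sqrt{\omega},\dots,\sqrt{\omega})$, so the value there equals the value at $\vec 1\in\Omega_\go^n$. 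None of these mechanisms (moment-matching measures, the doubled group $\Omega_{2\go}$, the special point $\sqrt{\omega}$, inseparability) appear in your sketch, and without a substitute for them your plan does not close.
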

		
		This is a discrete dimension-free inequality of \emph{Remez type}. 
		Consider $J$ a finite interval in $\R$ and a subset $E\subset J$ with positive Lebesgue measure $\mu(E)>0$. Let $f:\R\to \R$ be a real polynomial of degree at most $d$. A classical inequality of Remez \cite{remez36} states that 
		\begin{equation}\label{ineq:remez}
			\max_{x\in J}|f(x)|\le \left(\frac{4\mu(J)}{\mu(E)}\right)^d \max_{x\in E}|f(x)|.
		\end{equation}
		
		We divide the proof of Theorem \ref{thm:remez cyclic} into two steps. Fix $f:\Om_\go^n\to \C$ a polynomial of degree at most $d$. In the first step we show that 
		\begin{equation}
			\|f\|_{\T^n}\Lesssim{d,\go}\|f\|_{\Om_{2\go}^n},
		\end{equation}
		and in the second step we prove 
		\begin{equation}\label{ineq:2k to k}
			\|f\|_{\Om_{2\go}^n}\Lesssim{d,\go}\|f\|_{\Om_{\go}^n}.
		\end{equation}
		
		\textbf{Step 1} We shall prove 
		\begin{proposition}[Torus bounded by \unboldmath$\Omega_{2\grpord}$]
			\label{prop:Om2K-boundedness}
			Let $d,n\ge 1, K\ge 3$. Let $f:\T^n\to \C$ be an analytic polynomial of degree at most $d$ and individual degree at most $\grpord-1$.
			Then 
			\begin{equation*}
				\|f\|_{\T^n}\le C_\grpord^d\|f\|_{\Omega_{2\grpord}^n}\,,
			\end{equation*}
			where $C_\grpord\ge 1$ is a universal constant depending on $\grpord$ only.
		\end{proposition}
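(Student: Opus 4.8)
The plan is to transpose to the polytorus the two-step strategy of the previous section: first reduce to support-homogeneous polynomials by a splitting lemma in the spirit of Lemma~\ref{lem:splitting-lemma}, and then handle the support-homogeneous case by restricting to products of two-point sets and transferring the problem to $\Om_2^n$, where Proposition~\ref{prop:riesz proj} and the operators $\mathfrak D_\xi$ are available. Throughout I would exploit the maximum modulus principle in the form $\|f\|_{\T^n}=\|f\|_{\overline{\D}^n}$ to move freely between the torus and the closed polydisc, and the fact that, the individual degree of $f$ being at most $\grpord-1<2\grpord$, the values of $f$ on $\Om_{2\grpord}$ determine it faithfully in each variable.

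The first step should be a ``torus splitting lemma'': if $f:\T^n\to\C$ is analytic of degree at most $d$ with $j$-support-homogeneous parts $f_j$, then $\|f_j\|_{\T^n}\le C_\grpord^{\,d}\|f\|_{\T^n}$ with a dimension-free constant. I would obtain this exactly as Lemma~\ref{lem:splitting-lemma} was obtained: isolate the top support level $\ell$ by the operator $\mathfrak D_\xi$; prove the analogue of Proposition~\ref{prop:bdd of d}, namely $\|\mathfrak D_\xi f\|_{\T^n\times\Om_2^n}\le(2+2\sqrt2)^\ell\|f\|_{\T^n}$, by restricting to $\{1,\xi\}^n\subseteq\T^n$, applying the Boolean Riesz projection, and then using that $\T^n$ is a group to reinstate a free torus variable; rotate $\xi$ over $\Om_\grpord\setminus\{1\}$ so that (for prime $\grpord$) the accumulated factor $\prod_\xi\tau_\alpha^{(\xi)}$ collapses to the nonzero constant $\grpord^{\,\ell}$; and finally peel off the lower support levels by iterating the argument on $f-f_\ell$, then $f-f_\ell-f_{\ell-1}$, and so on. Granting this, it suffices to prove the proposition for a single $\ell$-support-homogeneous $f$ with $\ell\le d$.

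For such an $f$ I would again restrict to two-point sub-products $\{1,\xi\}^n$, now with $\xi\in\Om_\grpord\setminus\{1\}\subseteq\Om_{2\grpord}$ so that $\{1,\xi\}^n\subseteq\Om_{2\grpord}^n$ and hence $\|f\|_{\{1,\xi\}^n}\le\|f\|_{\Om_{2\grpord}^n}$, pass to $G_\xi f$ on $\{-1,1\}^n$, use Proposition~\ref{prop:riesz proj} to extract its degree-$\ell$ part (which, $f$ being $\ell$-support-homogeneous, records all of the coefficients $a_\alpha$ weighted by $\tau_\alpha^{(\xi)}$), and rotate $\xi$ over $\Om_\grpord\setminus\{1\}$ to neutralize the $\tau$-factors down to $\grpord^{\,\ell}$; a Haar expectation over the reinstated torus variable, followed by Hausdorff--Young and H\"older as in Lemma~\ref{lem:support-homo-BH}, then separates the individual $a_\alpha$. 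The genuinely delicate point is to arrange all of this so that the left-hand side reassembles into $\|f\|_{\T^n}$ (rather than merely producing coefficient bounds, which would cost a dimension factor on recombination) while the right-hand side stays $\|f\|_{\Om_{2\grpord}^n}$; this is the technical heart, and it is where the maximum modulus principle and a Remez-type passage from the chords $[1,\xi]$ back to the full circle must be used carefully. Each of the three sources of loss, $(1+\sqrt2)^\ell$, $(2+2\sqrt2)^\ell$, and $\grpord^{-\ell}$, then contributes only a $\grpord$-dependent base raised to the power $\ell\le d$, giving the claimed $C_\grpord^{\,d}$.

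The step I expect to be the main obstacle is precisely this passage from the continuous torus to the finite grid $\Om_{2\grpord}^n$ with a constant free of $n$. A one-dimension-at-a-time reduction is hopeless: the maximum of a degree-$(\grpord-1)$ trigonometric polynomial need not be attained at a $2\grpord$-th root of unity, so one loses a factor strictly larger than $1$ per coordinate and the bound degrades like $C^n$. Dimension-freeness therefore forces the degree/support accounting to be carried out globally --- this is the whole reason for reducing to support-homogeneous pieces and routing everything through the $\mathfrak D_\xi$ machinery rather than attempting a direct one-dimensional sampling argument --- and the individual-degree hypothesis $\grpord-1<2\grpord$ is exactly what makes the finite sampling in each variable lossless, so that no $n$-dependence enters through the reconstruction of the Fourier coefficients.
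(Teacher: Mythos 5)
There is a genuine gap: your plan never supplies a mechanism for the one step you yourself single out as ``the technical heart,'' namely reassembling a bound on $\|f\|_{\T^n}$ (a supremum over the whole polytorus) from data sampled on $\Omega_{2\grpord}^n$ with an $n$-independent constant. The $\mathfrak{D}_\xi$/splitting machinery you propose to transplant only yields control of coefficients (or of support-homogeneous pieces) in terms of $\|f\|_{\Omega_{\grpord}^n}$ or $\|f\|_{\Omega_{2\grpord}^n}$; as you note, recombining coefficient bounds into a sup over $\T^n$ costs a factor growing with the number of monomials, i.e.\ polynomially in $n$, and the $\ell^1$-norm of the Fourier coefficients of a bounded degree-$d$ polynomial is genuinely not dimension-free. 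Your proposed fix --- ``a Remez-type passage from the chords $[1,\xi]$ back to the full circle'' --- is circular, since that passage is exactly the content of the proposition being proved (it is Step 1 of Theorem \ref{thm:remez cyclic}). Moreover, your route restricts to $\{1,\xi\}^n$ with $\xi\in\Omega_\grpord$, so it is really aimed at the stronger bound by $\|f\|_{\Omega_\grpord^n}$, which in the paper requires the full Property A/B apparatus of Step 2 and is not how this proposition is obtained.

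The paper's proof is much more elementary and rests on an idea absent from your proposal: a moment-matching probability measure. Lemma \ref{lem:probability measure} shows there is $\eps_*=\eps_*(\grpord)>0$ such that every $z\in\C$ with $|z|\le\eps_*$ admits a probability measure $\mu_z$ on $\Omega_{2\grpord}$ with $\E_{\xi\sim\mu_z}\xi^m=z^m$ for all $0\le m\le \grpord-1$; positivity of the weights is obtained by perturbing the uniform measure, using invertibility of a $2\grpord\times 2\grpord$ Vandermonde-type matrix --- this is precisely where the $2\grpord$ sample points (rather than $\grpord$) are needed, since one must match $2\grpord-1$ real moment equations plus normalization. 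Tensorizing gives $f(z)=\E_{\xi\sim\mu(z)}f(\xi)$ for every $z\in(\eps_*\T)^n$, because the individual degree of $f$ is at most $\grpord-1$; hence $\|f\|_{(\eps_*\T)^n}\le\|f\|_{\Omega_{2\grpord}^n}$ with no dimensional loss at all. The passage from $(\eps_*\T)^n$ back to $\T^n$ uses only the total-degree hypothesis via Cauchy estimates on the homogeneous parts: $\|f\|_{\T^n}\le\sum_{k=0}^d\eps_*^{-k}\|f_k\|_{(\eps_*\T)^n}\le (d+1)\eps_*^{-d}\|f\|_{(\eps_*\T)^n}$, giving the stated $C_\grpord^d$. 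Without the representation $f(z)=\E_{\xi\sim\mu(z)}f(\xi)$ (or some equally dimension-free pointwise reconstruction), your outline cannot close.
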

		
		To prove this proposition, we need the following lemma.
		
		\begin{lemma}
			\label{lem:probability measure}
			Fix $\grpord\ge 3$.
			There exists $\eps=\eps(\grpord)\in (0,1)$ such that, for all $z\in\C$ with $|z|\le \eps$, one can find a probability measure $\mu_z$ on $\Om_{2\grpord}$ such that
			\begin{equation}\label{eq:complex equations to solve}
				z^m=\E_{\xi\sim\mu_z}\xi^m, \qquad \forall \quad 0\le m\le \grpord-1\,.
			\end{equation}
		\end{lemma}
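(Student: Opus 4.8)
The plan is to recognize \eqref{eq:complex equations to solve} as a (real-)linear moment problem on the finite set $\Om_{2\grpord}$ and to solve it by a small perturbation of the uniform measure, which already handles the case $z=0$. Write $\omega=e^{i\pi/\grpord}$, so that $\Om_{2\grpord}=\{\omega^j:0\le j\le 2\grpord-1\}$, and identify a measure on $\Om_{2\grpord}$ with its weight vector $p=(p_0,\dots,p_{2\grpord-1})\in\R^{2\grpord}$. Introduce the moment map $\Psi:\R^{2\grpord}\to\C^{\grpord}$, $\Psi(p)=\big(\sum_{j}p_j\omega^{jm}\big)_{0\le m\le \grpord-1}$; since the $p_j$ are real, $\Psi(p)$ always lies in $\R\oplus\C^{\grpord-1}$, a real vector space of dimension $2\grpord-1$. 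A direct geometric-sum computation shows the uniform weight $p_\ast=(\tfrac1{2\grpord},\dots,\tfrac1{2\grpord})$ satisfies $\Psi(p_\ast)=(1,0,\dots,0)$, i.e.\ it solves the problem for $z=0$ and sits in the interior of the simplex of probability weights.

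The heart of the argument is the claim that $\Psi$ is surjective onto $\R\oplus\C^{\grpord-1}$, equivalently $\dim_\R\ker\Psi=1$. I would prove this using the discrete Fourier transform on the cyclic group $\Z_{2\grpord}$: viewing $p$ as a function on $\Z_{2\grpord}$, the equations $\sum_j p_j\omega^{jm}=0$ for $0\le m\le \grpord-1$ say that $\widehat p$ vanishes on $\{0\}\cup\{\grpord+1,\dots,2\grpord-1\}$; the reality constraint $\widehat p(-k)=\overline{\widehat p(k)}$ then forces $\widehat p$ to vanish also on $\{1,\dots,\grpord-1\}$ and to be real at $k=\grpord$. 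By Fourier inversion $p$ must be a real multiple of the alternating vector $e=((-1)^j)_j$, and conversely $e\in\ker\Psi$ (again a geometric sum, using $-1=\omega^{\grpord}$). Hence $\ker\Psi=\R e$ and $\Psi$ is onto. (This step, like the rest of the proof, does not use primality of $\grpord$.)

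Finally, surjectivity plus finite dimensionality gives a constant $\kappa=\kappa(\grpord)>0$ with $\Psi(B(0,r))\supseteq B(0,\kappa r)$ for every $r>0$ (take $\kappa$ to be the least singular value of $\Psi$, so that $v:=\Psi^+w$ has $\|v\|_2\le \kappa^{-1}\|w\|_2$). Choose $r=\tfrac1{2\grpord}$, so that every $v$ with $\|v\|_2\le r$ keeps $p_\ast+v$ coordinatewise nonnegative. Since $\big\|(0,z,z^2,\dots,z^{\grpord-1})\big\|_2\le |z|\,(1-|z|^2)^{-1/2}$ for $|z|<1$, picking $\eps=\eps(\grpord)\in(0,1)$ with $\eps(1-\eps^2)^{-1/2}\le \kappa r$ ensures that for every $z$ with $|z|\le\eps$ there is $v\in B(0,r)$ with $\Psi(p_\ast+v)=(1,z,\dots,z^{\grpord-1})$. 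Then $p_\ast+v$ is a nonnegative weight vector summing to $1$, so $\mu_z:=\sum_j(p_\ast+v)_j\,\delta_{\omega^j}$ is a probability measure on $\Om_{2\grpord}$ with $\E_{\xi\sim\mu_z}\xi^m=z^m$ for all $0\le m\le \grpord-1$. The only genuinely delicate point is the kernel computation: it is precisely the doubling $\grpord\mapsto 2\grpord$ that leaves a one-dimensional slack, spanned by the alternating sign vector, while still allowing the remaining $\grpord-1$ complex moments to be moved freely — which is exactly what lets the uniform measure be perturbed in all the required directions.
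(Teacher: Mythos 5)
Your proof is correct and follows essentially the same strategy as the paper: both treat \eqref{eq:complex equations to solve} as a real-linear moment system on $\Omega_{2K}$ and perturb the uniform weight vector $p_*$ (which solves the case $z=0$ and lies in the interior of the simplex), using a norm bound on an inverse to keep the weights nonnegative once $|z|$ is small. The only difference is linear-algebra bookkeeping: the paper augments the system with the extra equation $\sum_k p_k\cos(kK\theta)=\Re z^K$ so as to get a square matrix $D_K$ whose nonsingularity is proved by reducing, via the same conjugation symmetry you call the reality constraint, to the $2K\times 2K$ Vandermonde matrix of the $2K$-th roots of unity, and then bounds $\|D_K^{-1}\|_{\infty\to\infty}$; you instead keep the rectangular map, identify its one-dimensional kernel (the alternating vector) by the discrete Fourier transform, and use surjectivity together with the least singular value of the pseudoinverse.
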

		
		\begin{proof}
			Put $\theta=2\pi/2\grpord=\pi/\grpord$ and $\omega=\omega_{2\grpord}=e^{\iu\theta}$.
			Fix a $z\in \C$. Finding a probability measure $\mu_z$ on $\Om_{2\grpord}$ satisfying \eqref{eq:complex equations to solve} is equivalent to solving 
			\begin{equation}\label{eq:real equations to solve}
				\begin{cases}
					\sum_{k=0}^{2\grpord-1}p_k=1&\\
					\sum_{k=0}^{2\grpord-1}p_k\cos(km\theta)=\Re z^m& 1\le m\le \grpord-1\\						
					\sum_{k=0}^{2\grpord-1}p_k\sin(km\theta)=\Im z^m& 1\le m\le \grpord-1
				\end{cases}
			\end{equation}
			with non-negative $p_k=\mu_z(\{\omega^k\})$ for $k=0,1,\ldots, 2\grpord-1$. Note that the $p_k$'s are non-negative and thus real.
			
			For this, it is sufficient to find a solution $\vec{p}=\vec{p}_z$ to $D_\grpord \vec p=\vec v_z$ with each entry of $\vec p=(p_0,\ldots, p_{2\grpord-1})^\top$ being non-negative. Here $D_\grpord$ is a $2\grpord\times 2\grpord$ real matrix given by
			\begin{equation*}
				D_\grpord=
				\begin{bmatrix}
					1&1 & 1 &\cdots & 1\\
					1&  \cos(\theta) &\cos(2\theta)&\cdots &\cos((2\grpord-1)\theta)\\
					\vdots & \vdots & \vdots &\vdots \\
					1&\cos(\grpord\theta) &\cos(2\grpord\theta)&\cdots &\cos((2\grpord-1)\grpord\theta)\\[0.3em]
					1&\sin(\theta) &\sin(2\theta)&\cdots &\sin((2\grpord-1)\theta)\\
					\vdots&\vdots & \vdots & \vdots &\vdots \\
					1&\sin((\grpord-1)\theta) &\sin(2(\grpord-1)\theta)&\cdots &\sin((2\grpord-1)(\grpord-1)\theta)\\
				\end{bmatrix},
			\end{equation*}
			and $v_z=(1,\Re z, \dots, \Re z^{\grpord-1},\Re z^{\grpord},\Im z,\dots, \Im z^{\grpord-1})^\top\in \R^{2\grpord}$. Note that \eqref{eq:real equations to solve} does not require the $(\grpord+1)$-th row
			\begin{equation}\label{eq:K+1 row}
				(1,\cos(\grpord\theta),\cos(2\grpord\theta),\dots, \cos((2\grpord-1)\grpord\theta))
			\end{equation}
			of $D_\grpord$.


			The matrix $D_\grpord$ is non-singular. To see this, take any $$\vec{x}=(x_0,x_1,\dots, x_{2\grpord-1})^\top\in \R^{2\grpord}$$ 
			such that $D_\grpord \vec{x}=\vec{0}$. Then 
			\begin{equation}\label{eq:zero}
				\sum_{k=0}^{2\grpord-1}(\omega^k)^m x_k=0,\qquad 0\le m\le \grpord.
			\end{equation}
			This is immediate for $0\le m\le \grpord-1$ by definition, and $m=\grpord$ case follows from the ``additional" row \eqref{eq:K+1 row} together with the fact that $\sin(k\grpord\theta)=0,0\le k\le 2\grpord-1.$
			Conjugating \eqref{eq:zero}, we get
			\begin{equation*}
				\sum_{k=0}^{2\grpord-1}(\omega^k)^m x_k=0,\qquad \grpord\le m\le 2\grpord.
			\end{equation*}
			Altogether, we have 
			\begin{equation*}
				\sum_{k=0}^{2\grpord-1}(\omega^k)^m x_k=0,\qquad 0\le m\le 2\grpord-1,
			\end{equation*}
			that is, $V\vec{x}=\vec{0}$, where $V=V_\grpord=[\omega^{jk}]_{0\le j,k\le 2\grpord-1}$ is a $2\grpord\times 2\grpord$ Vandermonde matrix given by $(1,\omega,\dots, \omega^{2\grpord-1})$. 	Since $V$ has determinant
			\begin{equation*}
				\det(V)=\prod_{0\le j<k\le 2\grpord-1}(\omega^j-\omega^k)\neq 0\,,
			\end{equation*}
			we get $\vec x=\vec 0$. So $D_\grpord$ is non-singular. 
			
			Therefore, for any $z\in\C$, the solution to \eqref{eq:real equations to solve}, thus to \eqref{eq:complex equations to solve}, is given by 
			\[
			\vec p_z=\big(p_0(z),p_1(z),\dots, p_{2\grpord-1}(z)\big)=D_\grpord^{-1}\vec v_z\in \R^{2\grpord}.
			\] 
			
			Notice one more thing about the rows of $D_\grpord$. As
			\[
			\sum_{k=0}^{2\grpord-1} (\omega^k)^m =0,\qquad m=1, 2, \dots, 2\grpord-1\,,
			\]
			we have automatically that vector $\vec p_* := \big(\frac1{2\grpord}, \dots, \frac1{2\grpord}\big)\in \R^{2\grpord}$ gives
			\[
			D_\grpord \vec p_* =(1, 0, 0, \dots, 0)^T=:\vec v_*\,.
			\]
			For any $k$-by-$k$ matrix $A$ denote 
			\[\|A\|_{\infty\to\infty}:=\sup_{\vec 0\neq v\in\R^{k}}\frac{\|Av\|_\infty}{\|v\|_\infty}.\]
			So with $\vec{p_*}:= D^{-1}_\grpord\vec{v_*}$  we have 
			\begin{align*}
				\|\vec p_z-\vec p_*\|_\infty
				&\le \|D_\grpord^{-1}\|_{\infty\to \infty}\|\vec v_z-\vec v_*\|_\infty\\
				&=\|D_\grpord^{-1}\|_{\infty\to \infty}\max\left\{\max_{1\le k\le \grpord}|\Re z^k|,\max_{1\le k\le \grpord-1}|\Im z^k|\right\}\\
				&\le \|D_\grpord^{-1}\|_{\infty\to \infty}\max\{|z|,|z|^\grpord\}.
			\end{align*}
			That is, 
			\begin{equation*}
				\max_{0\le j\le 2\grpord-1}\left|p_j(z)-\frac{1}{2\grpord}\right|
				\le \|D_\grpord^{-1}\|_{\infty\to \infty}\max\{|z|,|z|^{\grpord}\}.
			\end{equation*}
			Since $D_\grpord^{-1}\vec v_*=\vec p_*$, we have $\|D_\grpord^{-1}\|_{\infty\to \infty}\ge 2\grpord$.
			Put
			\[
			\eps_*:=\frac{1}{2\grpord\|D_\grpord^{-1}\|_{\infty\to \infty}}\in \left(0,\frac{1}{(2\grpord)^2}\right].
			\]
			Thus whenever $|z|<\eps_*<1$, we have 
			\begin{equation*}
				\max_{0\le j\le 2\grpord-1}\left|p_j(z)-\frac{1}{2\grpord}\right|
				\le |z|\|D_\grpord^{-1}\|_{\infty\to \infty}
				\le \eps_* \|D_\grpord^{-1}\|_{\infty\to \infty}\le \frac{1}{2\grpord},
			\end{equation*}
			so in particular $p_j(z)\ge 0$ for all $0\le j\le 2\grpord-1$.
		\end{proof}

		Now we are ready to prove Proposition \ref{prop:Om2K-boundedness}.
		
		\begin{proof}[Proof of Proposition \ref{prop:Om2K-boundedness}]
			Let $\eps_*$ be as in Lemma \ref{lem:probability measure}.
			With a view towards applying the lemma we begin by relating $\sup |f|$ over the polytorus to $\sup |f|$ over a scaled copy.
			Recalling that the homogeneous parts $f_k$ of $f$ are trivially bounded by $f$ over the torus: $\|f_k\|_{\T^n}\le \|f\|_{\T^n}$ (a standard Cauchy estimate). Thus we have
			\begin{equation}	\label{ineq:torus-dilation}
				\begin{split}
					\|f\|_{\T^n}\leq \sum_{k=0}^d\|f_k\|_{\T^n}\nonumber
					= \sum_{k=0}^d \eps_*^{-k}\sup_{z\in\T^n}|f_k(\eps_* z)|\nonumber
					&\leq \sum_{k=0}^d \eps_*^{-k}\sup_{z\in\T^n}|f(\eps_* z)|\nonumber\\
					&\leq (d+1)\eps_*^{-d}\sup_{z\in\T^n}|f(\eps_* z)|\nonumber
					= (d+1)\eps_*^{-d}\|f\|_{(\eps_*\T)^n}\,.
				\end{split}
			\end{equation}
			
			Let $z=(z_1,\ldots, z_n)\in (\eps_* \T)^n$.
			Then for each coordinate $j=1,2,\ldots, n$ there exists by Lemma \ref{lem:probability measure} a probability distribution $\mu_j=\mu_j(z)$ on $\Om_{2\grpord}$ for which $\E_{\xi_j\sim\mu_j}[\xi_j^k]=z_j^k$ for all $0\le k\le \grpord-1$.
			With $\mu =\mu(z):=\mu_1\times\cdots\times\mu_n$, this implies for a monomial with multi-index $\alpha\in \{0,1,\dots, \grpord-1\}^n$, $\E_{\xi\sim\mu(z)}[\xi^\alpha]=z^\alpha,$ or more generally $\E_{\xi\sim\mu(z)}[f(\xi)]=f(z)$ for $z\in(\eps_*\T)^n$ and $f$ under consideration.
			So
			\begin{equation}
				\label{eq:dilated-torus-comparison}
				\sup_{z\in(\eps_* \T)^n}|f(z)|= \sup_{z\in(\eps_* \T)^n}\Big|\E_{\xi\sim\mu(z)}f(\xi)\Big|\leq  \sup_{z\in(\eps_* \T)^n}\E_{\xi\sim\mu(z)}|f(\xi)|\leq\|f\|_{\Om_{2\grpord}^n}.
			\end{equation}
			Combining this with the  observation \eqref{ineq:torus-dilation} we conclude
			\begin{equation*}
				\|f\|_{\T^n}\leq (d+1)\eps_*^{-d}\|f\|_{(\eps_*\T)^n}\leq (d+1)\eps_*^{-d}\|f\|_{\Om_{2\grpord}^n}
				\leq C_\grpord^d\|f\|_{\Om_{2\grpord}^n}. \qedhere
			\end{equation*}
		\end{proof}
		
		\textbf{Step 2} Recall that $\omega=e^{\frac{2\pi i}{\go}}$. For convenience, we use $\sqrt{\omega}$ to denote $e^{\frac{\pi i}{\go}}$. A simple group rotation argument reduces \eqref{ineq:2k to k} to the estimate at $(\sqrt{\om},\dots, \sqrt{\om}).$
		
		\begin{proposition}\label{prop:reduction to sqrt of omega}
			To prove \eqref{ineq:2k to k} it suffices to prove 
			\begin{equation}\label{ineq:k to sqrt omega}
				|f(\sqrt{\om},\dots, \sqrt{\om})|\Lesssim{d,\go} \|f\|_{\Om_\go^n}
			\end{equation}
			for all $f:\T^n\to \C$ of degree at most $d$ and individual degree at most $\go-1$ for all $n\ge 1$.
		\end{proposition}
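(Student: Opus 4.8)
The plan is to exploit translation-invariance of $\Om_\go^n$ together with the fact that $\Om_{2\go}$ is the disjoint union of $\Om_\go$ and the single coset $\sqrt{\om}\,\Om_\go$. Concretely, every $\zeta\in\Om_{2\go}$ can be written as $\zeta=\sqrt{\om}^{\,\e}v$ with $\e\in\{0,1\}$ and $v\in\Om_\go$ (even powers of $\sqrt{\om}$ lie in $\Om_\go$, odd powers in the coset). So given an arbitrary $w=(w_1,\dots,w_n)\in\Om_{2\go}^n$, I would write $w_j=\sqrt{\om}^{\,\e_j}v_j$ for each $j$ and pass to the rotated polynomial $g(z):=f(v_1z_1,\dots,v_nz_n)$. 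Since $(v_j)\in\Om_\go^n$ and $\Om_\go$ is a group, coordinatewise multiplication by $(v_j)$ permutes $\Om_\go^n$, whence $\|g\|_{\Om_\go^n}=\|f\|_{\Om_\go^n}$; moreover $g$ is again an analytic polynomial of degree at most $d$ with individual degrees at most $\go-1$ (rescaling variables only changes coefficients), and $f(w)=g(t)$ with $t_j=\sqrt{\om}^{\,\e_j}\in\{1,\sqrt{\om}\}$.

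This reduces the task to estimating $|g(t)|$ for a vector $t$ with entries in $\{1,\sqrt{\om}\}$. Let $T:=\{j:\e_j=0\}$ be the set of coordinates where $t_j=1$, and let $h$ be the restriction of $g$ obtained by setting $z_j=1\in\Om_\go$ for all $j\in T$; then $h$ is a polynomial in the variables $(z_j)_{j\notin T}$ of degree at most $d$ and individual degree at most $\go-1$, and restriction gives $\|h\|_{\Om_\go^{T^c}}\le\|g\|_{\Om_\go^n}$. By construction $g(t)=h(\sqrt{\om},\dots,\sqrt{\om})$, the right-hand side having $|T^c|\le n$ coordinates. Applying the hypothesis \eqref{ineq:k to sqrt omega} — which holds with a constant uniform in the number of variables — to $h$ yields
\[
|g(t)|=|h(\sqrt{\om},\dots,\sqrt{\om})|\Lesssim{d,\go}\|h\|_{\Om_\go^{T^c}}\le\|g\|_{\Om_\go^n}=\|f\|_{\Om_\go^n},
\]
with the degenerate case $T^c=\emptyset$ being trivial, since then $h$ is a constant of modulus $|f(v_1,\dots,v_n)|\le\|f\|_{\Om_\go^n}$. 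Taking the supremum over $w\in\Om_{2\go}^n$ gives exactly \eqref{ineq:2k to k}.

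In this direction there is essentially no obstacle: the argument is pure bookkeeping with the group rotation $z\mapsto(v_jz_j)$ and the restriction $z_j\mapsto 1$, and the only point requiring attention is that the constant in \eqref{ineq:k to sqrt omega} does not depend on $n$, so that it can be invoked for $h$ in $|T^c|$ variables. The genuine difficulty of Step 2 lies entirely in establishing \eqref{ineq:k to sqrt omega} itself — the single-point estimate at $(\sqrt{\om},\dots,\sqrt{\om})$ — which is where the primality of $\go$ and the surgery/pseudo-projection techniques of the previous section will come into play.
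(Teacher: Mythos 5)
Your proposal is correct and is essentially the paper's intended argument: the paper leaves this as an exercise with the hint $\Omega_{2\go}=\Omega_\go\times\{1,\sqrt{\omega}\}$, and your rotation by $(v_j)\in\Omega_\go^n$ followed by freezing the coordinates already lying in $\Omega_\go$ and applying the point estimate \eqref{ineq:k to sqrt omega} in the remaining variables is exactly that "simple group rotation argument," with the key uniformity-in-$n$ of the constant correctly noted.
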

		
		\begin{proof}
			The proof is left as an exercise and we refer to \cite{svz23remez} for details. Note that $\Omega_{2\go}=\Omega_{\go}\times \{1,\sqrt{\omega}\}$.
		\end{proof}
		
		
		The proof of Proposition \ref{prop:reduction to sqrt of omega} is the subject of the rest of this subsection. 
		Our approach is to split $f$ into parts $f=\sum_{j}g_j$ such that each part $g_j$ has the properties A and B:
		
		\begin{equation}\label{ineq:proof ideas}
			\|f\|_{\Om_\grpord^n}
			\overset{\text{Property A}}{\Gtrsim{d,\grpord}}
			\|g_j\|_{\Om_\grpord^n}
			\overset{\text{Property B}}{\Gtrsim{d,\grpord}}
			|g_j(\sqrt{\om},\dots, \sqrt{\om})|\,.
		\end{equation}
		
		Such splitting gives
		\[|f(\sqrt{\om},\dots, \sqrt{\om})|\leq\sum_j|g_j(\sqrt{\om},\dots, \sqrt{\om})|\Lesssim{d,\grpord}\sum_j\|g_j\|_{\Om_\grpord^n}\Lesssim{d,\grpord}\sum_j\|f\|_{\Om_\grpord^n}\,.\]
		So as long as the number of $g_j$'s is independent of $n$ such a splitting with Properties A and B entails the result.

		For this we shall employ the operator $\mathfrak{D}=\mathfrak{D}_{\omega}$ in the last section that is defined via 
		\begin{equation}
			\mathfrak{D}f(z)= \sum_{ |\supp(\alpha)|=L} \tau_\alpha a_{\alpha} z^{\alpha}\qquad \textnormal{ for }\qquad f(z)= \sum_{ |\supp(\alpha)|\le L} a_{\alpha} z^{\alpha},
		\end{equation}
		where $\tau_{\alpha}:=\tau_{\alpha}^{(\omega)}=\prod_{j:\alpha_j\neq 0}(1-\omega^{\alpha_j}).$ Inspired by $\mathfrak{D}$, we say that two monomials $m=z^\alpha, m'=z^\beta$ are \emph{inseparable} if $|\supp(\alpha)|=|\supp(\beta)|$ and $\tau_{\alpha}=\tau_{\beta}$. When two monomials $m,m'$ are inseparable, we write $m\sim m'.$
		
		
		\begin{proposition}[Property A]\label{prop:Property A}
			Fix $\grpord\ge 3$ and $d\ge 1$. Suppose that $f:\Om_\grpord^n\to\C$ is a polynomial of degree at most $d$ with maximum support size $L$. For $0\leq\ell\leq L$ let $f_\ell$ denote the part of $f$ composed of monomials of support size $\ell$, and let $g_{(\ell, 1)},\ldots, g_{(\ell, J_\ell)}$ be the inseparable parts of $f_\ell$.
			Then there exists a universal constant $C_{d,\grpord}$ independent of $n$ and $f$ such that for all $0\leq \ell\leq L$ and $1\leq j\leq J_\ell$,
			\[\|g_{(\ell, j)}\|_{\Om_\grpord^n}\leq C_{d,\grpord}\|f\|_{\Om_\grpord^n}\,.\]
		\end{proposition}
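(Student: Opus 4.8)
The plan is to isolate the target piece $g_{(\ell,j)}$ from $f$ in two stages: first strip off the single support-homogeneous layer $f_\ell$ using the Splitting Lemma, and then extract one inseparable class from $f_\ell$ by a polynomial functional calculus in the pseudo-projection $\mathfrak{D}_\omega$. For the first stage, Lemma \ref{lem:splitting-lemma} gives $\|f_\ell\|_{\Om_\grpord^n}\le C(d,\grpord)\|f\|_{\Om_\grpord^n}$ for every $0\le\ell\le L$, so it suffices to prove $\|g_{(\ell,j)}\|_{\Om_\grpord^n}\Lesssim{d,\grpord}\|f_\ell\|_{\Om_\grpord^n}$; fix $\ell$ with $f_\ell\neq 0$. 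The key observation is that on $\ell$-support-homogeneous polynomials $\mathfrak{D}_\omega$ acts diagonally in the monomial basis, sending $\sum_{|\supp(\alpha)|=\ell}a_\alpha z^\alpha$ to $\sum_{|\supp(\alpha)|=\ell}\tau_\alpha^{(\omega)}a_\alpha z^\alpha$ while staying $\ell$-support-homogeneous of degree $\le d$. Hence the iterates are well defined, $\mathfrak{D}_\omega^s f_\ell=\sum_{|\supp(\alpha)|=\ell}(\tau_\alpha^{(\omega)})^s a_\alpha z^\alpha$, and applying Proposition \ref{prop:bdd of d} (with the $\Om_2^n$-variable specialized to $\vec{1}$) $s$ times yields $\|\mathfrak{D}_\omega^s f_\ell\|_{\Om_\grpord^n}\le (2+2\sqrt2)^{\ell s}\|f_\ell\|_{\Om_\grpord^n}$ for all $s\ge 0$.

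Next I would use a finiteness phenomenon: setting $m_r(\alpha)=|\{j:\alpha_j=r\}|$, one has $\tau_\alpha^{(\omega)}=\prod_{r=1}^{\grpord-1}(1-\omega^r)^{m_r(\alpha)}$, which depends on $\alpha$ only through $(m_1(\alpha),\dots,m_{\grpord-1}(\alpha))$ with $\sum_r m_r(\alpha)=\ell\le d$, so $\tau_\alpha^{(\omega)}$ takes at most $\binom{d+\grpord-2}{\grpord-2}$ distinct values as $\alpha$ ranges over multi-indices of support size $\ell$ — a bound independent of $n$. Let $t_1,\dots,t_{J_\ell}$ be the distinct values occurring in $f_\ell$; they are distinct and lie in the fixed finite set $\mathcal{T}_{d,\grpord}:=\{\prod_{r=1}^{\grpord-1}(1-\omega^r)^{m_r}: m_r\ge0,\ \sum_r m_r\le d\}$, hence $|t_i|\le 2^d$ and $\min_{i\neq i'}|t_i-t_{i'}|\ge\delta_{d,\grpord}>0$ with $\delta_{d,\grpord}$ the minimal gap of $\mathcal{T}_{d,\grpord}$, both $J_\ell$ and $\delta_{d,\grpord}$ depending only on $d,\grpord$. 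Let $P_j$ be the degree-$(J_\ell-1)$ Lagrange polynomial with $P_j(t_i)=\delta_{ij}$; the bounds on $|t_i|$ and on the gaps force its coefficients to be bounded by a constant $C'_{d,\grpord}$. Since $P_j(\tau_\alpha^{(\omega)})=1$ when $z^\alpha$ lies in the $j$-th inseparable class and $0$ otherwise, plugging the operator $\mathfrak{D}_\omega$ into $P_j$ (with $\mathfrak{D}_\omega^0=\mathrm{id}$) gives $P_j(\mathfrak{D}_\omega)f_\ell=g_{(\ell,j)}$, so
\[
\|g_{(\ell,j)}\|_{\Om_\grpord^n}\le C'_{d,\grpord}\sum_{s=0}^{J_\ell-1}(2+2\sqrt2)^{\ell s}\|f_\ell\|_{\Om_\grpord^n}\le C'_{d,\grpord}\,J_\ell\,(2+2\sqrt2)^{dJ_\ell}\|f_\ell\|_{\Om_\grpord^n}\Lesssim{d,\grpord}\|f_\ell\|_{\Om_\grpord^n}.
\]
Combining this with the Splitting Lemma bound completes the argument.

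I expect the main obstacle to be bookkeeping rather than analysis: one must verify that the number of inseparable classes $J_\ell$, the quantity $\max_i|t_i|$, and the separation $\delta_{d,\grpord}$ are all controlled purely by $d$ and $\grpord$ — this is precisely where the low-support-size hypothesis enters — and that $\mathfrak{D}_\omega$ genuinely supports a polynomial functional calculus on support-homogeneous polynomials with the stated control on iterates. All of the analytic substance is supplied by Proposition \ref{prop:bdd of d} and Lemma \ref{lem:splitting-lemma}.
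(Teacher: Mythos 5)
Your proof is correct, and it is the same engine as the paper's — iterate the pseudo-projection $\mathfrak{D}_\omega$, use Proposition \ref{prop:bdd of d} for dimension-free control of the iterates, and invert a Vandermonde-type system in the finitely many possible values of $\tau_\alpha^{(\omega)}$ — but it is packaged along a genuinely different route. The paper never isolates $f_\ell$ up front: it works on $f$ itself, extracts the top layer's inseparable parts by inverting the modified Vandermonde matrix in the powers $\mathfrak{D}^1,\dots,\mathfrak{D}^{J_L}$ (which is why it needs $c_j\neq 0$), then peels off $f_L$, re-deriving the bound $\|f_L\|_{\Om_\grpord^n}\le D_L\|f\|_{\Om_\grpord^n}$ inside the proof and iterating downward; in particular its argument is self-contained and does not invoke Lemma \ref{lem:splitting-lemma}. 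You instead quote the Splitting Lemma to reduce immediately to the support-homogeneous piece $f_\ell$, and then extract $g_{(\ell,j)}$ by Lagrange interpolation in $\mathfrak{D}_\omega$ (including the $s=0$ term, so the nonvanishing of the $t_i$ is not even needed). What your route buys is economy and transparency: the two-stage structure is cleaner, and your justification that the interpolation coefficients are dimension-free — the $\tau$-values live in the fixed finite set $\mathcal{T}_{d,\grpord}$, so $J_\ell$, $\max_i|t_i|$ and the minimal gap $\delta_{d,\grpord}$ depend only on $d$ and $\grpord$ — makes explicit a point the paper glosses when it asserts that the rows $\eta^{(L,j)}$ of $V_L^{-1}$ "depend on $d$ and $\grpord$ only" (strictly, the $c_j$ depend on $f$, and one must take a supremum over the finitely many admissible configurations, exactly as you do). What the paper's route buys is independence from the Splitting Lemma: within these notes that lemma is proved only for prime $\grpord$ (consistent with this section's standing assumption), whereas the in-proof peeling via $(1\,1\,\cdots\,1)V_L^{-1}$ uses nothing beyond Proposition \ref{prop:bdd of d} and so works verbatim for every $\grpord\ge 3$. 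Your argument is sound as written for the prime case treated here, and becomes fully general if you either cite the general-$\grpord$ Splitting Lemma from the cited reference or replace stage one by the same layer-peeling your functional calculus already provides (summing $\sum_j P_j(\mathfrak{D}_\omega)$ recovers the top layer, after which one subtracts and iterates).
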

		\begin{proof}
			We first show the proposition for $g_{(L,j)}$, $1\leq j \leq J_L$. Suppose that 
			\begin{equation*}
				f(z)=\sum_{\alpha:|\supp(\alpha)|\le L}a_{\alpha}z^{\alpha}.
			\end{equation*}
			Inductively, one obtains from Proposition \ref{prop:bdd of d} that for $1\leq k\leq J_L$,
			\begin{equation}\label{ineq:c(alpha)^k}
				\mathfrak{D}^{k}f= \sum_{ |\supp(\alpha)|=L} \tau_\alpha^ka_{\alpha} z^{\alpha}
				\qquad\text{ with }\qquad\left\| \mathfrak{D}^{k}f\right\|_{\Om_\grpord^n}\le (2+2\sqrt{2})^{kL}\|f\|_{\Om_\grpord^n}.
			\end{equation}
			
			By definition there are $J_L$ distinct values of $\tau_\alpha$ among 
			the monomials of $f_L$; label them $c_1,\ldots, c_{J_L}$.
			Then
			\begin{equation*}
				f_{L}(z)=\sum_{ |\supp(\alpha)|=L} a_{\alpha} z^{\alpha}=\sum_{1\le j\le J_L}g_{(L,j)}(z), \quad\text{and}
			\end{equation*}
			\begin{equation*}
				\mathfrak{D}^{k}f(z)=\sum_{ |\supp(\alpha)|=L} \tau_\alpha^k a_{\alpha}z^{\alpha}=\sum_{1\le j\le J_L}c_j^k g_{(L,j)}(z),\qquad k\ge 1.
			\end{equation*}
			Let us confirm $J_L$ is independent of $n$.
			Consider $\alpha$ with $|\supp(\alpha)|=L$.
			We may count the support size of $\alpha$ by binning coordinates according to their degree:
			$|\supp(\alpha)|=L$,
			\[\sum_{1\leq t\leq \grpord-1}|\{s\in[n]:\alpha_s=t\}|=L\leq d,\]
			so 
			\begin{align}
				\label{ineq:J_L}
				\begin{split}
					J_L &\leq |\{(m_1,\dots, m_{\grpord-1})\in \{0,\ldots, L\}^{\grpord-1}:m_1+\cdots  +m_{\grpord-1}= L\}|\\
					&\le  \binom{\grpord-1+L-1}{L-1}\leq (\grpord+d)^d\,.
				\end{split}
			\end{align}
			
			According to \eqref{ineq:c(alpha)^k}, we have
			
			\begin{equation*}
				\begin{pmatrix}
					\mathfrak{D}f\\
					\mathfrak{D}^{2}\!f\\
					\vdots \\
					\mathfrak{D}^{J_L}\!f\\
				\end{pmatrix}
				=\underbracedmatrix{\begin{matrix}
						c_1 & c_2 & \cdots & c_{J_L}\\
						c_1^2 & c_2^2 &\cdots & c_{J_L}^2\\
						\vdots & \vdots & \ddots & \vdots\\
						c_1^{J_L} & c_2^{J_L} & \cdots & c_{J_L}^{J_L}
				\end{matrix}}{=:\; V_L}
				\begin{pmatrix}
					g_{(L,1)}\\
					g_{(L,2)}\\
					\vdots \\
					g_{(L,J_L)}\\
				\end{pmatrix}.
			\end{equation*}
			
			The $J_L\times J_L$ modified Vandermonde matrix $V_L$ has determinant
			\[\det(V_L) = \left(\prod_{j=1}^{J_L} c_j\right) \left( \prod_{1\leq s < t \leq J_L}(c_s-c_t)\right).\]
			Since the $c_j$'s are distinct and nonzero we have $\det(V_L)\neq 0$. So $V_L$ is invertible and in particular $g_{(L,j)}$ is the $j$\textsuperscript{th} entry of $V_L^{-1}(\mathfrak{D}^1\!f,\ldots,\mathfrak{D}^{J_L}\!f)^\top$.
			Letting $\eta^{(L,j)}=(\eta^{(L,j)}_k)_{1\le k\le J_L}$ be the $j$\textsuperscript{th} row of $V_L^{-1}$, this means
			\begin{align*}
				g_{(L,j)}&=\sum_{1\le k\le J_L}\eta_k^{(L,j)}\, \mathfrak{D}^{k}\!f.
			\end{align*}
			And $\eta^{(L,j)}$ depends on $d$ and $\grpord$ only, so for all $1\leq j\leq J_L$,
			\begin{equation}
				\label{eq:top-supp-bd}
				\|g_{(L,j)}\|_{\Om_\grpord^n}\le \sum_{1\le k\le J_L} \big|\eta_k^{(L,j)}\big|\!\cdot\!	\left\|\mathfrak{D}^{k}\!f\right\|_{\Om_\grpord^n}
				\le \|\eta^{(L,j)}\|_1\big(2+2\sqrt{2}\big)^{J_Ld}	\|f\|_{\Om_\grpord^n},
			\end{equation}
			where we used \eqref{ineq:c(alpha)^k} in the last inequality. The constant \[\|\eta^{(L,j)}\|_1(2+2\sqrt{2})^{J_Ld}\leq C(d,\grpord)<\infty\]
			for appropriate $C(d,\grpord)$ is dimension-free and depends only on $d$ and $\grpord$ only.
			This finishes the proof for the inseparable parts in $f_L$.
			
			We now repeat the argument on $f-f_L$ to obtain \eqref{eq:top-supp-bd} for the inseparable parts of support size $L-1$.
			In particular, there are vectors $\eta^{(L-1,j)}$, $1\leq j\leq J_{L-1}$ of dimension-free 1-norm with
			\[\|g_{(L-1,j)}\|_{\Om_\grpord^n}\leq C(d,\grpord)\|\eta^{(L-1,j)}\|_1\|f-f_L\|_{\Om_\grpord^n}
			\Lesssim{d,\grpord}\|f-f_L\|_{\Om_\grpord^n}\,.\]
			This can be further repeated to obtain for $0\leq \ell\leq L$ and $1\leq j\leq J_\ell$, the vectors $\eta^{(\ell,j)}$ with dimension-free 1-norm such that
			\[\|g_{(\ell, j)}\|_{\Om_\grpord^n}\Lesssim{d,\grpord}\left\|f-\sum_{\ell+1\leq k\leq L}f_k\right\|_{\Om_\grpord^n}\,.\]
			
			It remains to relate $\|f-\sum_{\ell+1\leq k \leq L}f_k\|_{\Om_\grpord^n}$ to $\|f\|_{\Om_\grpord^n}$.
			Note that with $V_L$ as originally defined, by considering $(1\,1\,\dots\,1)V_L^{-1}(\mathfrak{D}^1f,\ldots, \mathfrak{D}^{J_L}f)^\top$ we obtain a constant $D_L=D_L(d,\grpord)$ independent of $n$ for which
			\[\|f_L\|_{\Om_\grpord^n}\leq D_L\|f\|_{\Om_\grpord^n}\,.\]
			This means
			\[\|f-f_L\|_{\Om_\grpord^n}\leq (1+D_L)\|f\|_{\Om_\grpord^n}\,.\]
			Notice the top-support part of $f-f_L$ is exactly $f_{L-1}$, so repeating the argument above on $f-f_L$ yields a constant $D_{L-1}=D_{L-1}(d,\grpord)$ such that
			\begin{equation}
				\|f_{L-1}\|_{\Om_\grpord^n}\leq D_{L-1}\|f-f_L\|_{\Om_\grpord^n}
				\le D_{L-1}(1+D_L)\|f\|_{\Om_\grpord^n}=(D_{L-1}+D_{L-1}D_L)\|f\|_{\Om_\grpord^n}\,.
			\end{equation}
			Continuing, for $1\leq \ell \leq L$ we find 
					\begin{align*}
						\|f_{L-\ell}\|_{\Om_\grpord^n}
						\leq D_{L-\ell}\left\|f-\sum_{L-\ell+1\le k\le L}f_k\right\|_{\Om_\grpord^n}
						&\leq D_{L-\ell}(1+D_{L-\ell+1})\left\|f-\sum_{L-\ell+2\le k\le  L}f_k\right\|_{\Om_\grpord^n}\\
						&\leq \cdots\leq D_{L-\ell}\prod_{0\le k\le \ell-1}(1+D_{L-k})\|f\|_{\Om_\grpord^n}.
					\end{align*}
					We have found for each $\ell$-support-homogeneous part of $f$,
					\[\|f_\ell\|_{\Om_\grpord^n}\Lesssim{d,\grpord}\|f\|_{\Om_\grpord^n},\]
					so we have $\|f-\sum_{\ell+1\leq k \leq L}f_k\|_{\Om_\grpord^n}\Lesssim{d,\grpord}\|f\|_{\Om_\grpord^n}$ as well.
				\end{proof}
				
				\subsubsection{Property B: Boundedness at \texorpdfstring{$(\sqrt{\omega},\dots, \sqrt{\omega})$}{√ω} for inseparable parts}
				
				Here we argue $g(\sqrt{\omega},\dots, \sqrt{\omega})$ is bounded for inseparable $g$. Recall that $\omega=e^{\frac{2\pi \iu}{K}}$ and $\sqrt{\omega}=e^{\frac{\pi \iu}{K}}$.
				\begin{proposition}[Property B]
					\label{prop:Property B}
					If $g$ is a linear combination of inseparable monomials,  then 
					\begin{equation}
						g(\sqrt{\omega},\dots, \sqrt{\omega})|\leq\|g\|_{\Om_\grpord^n}.
					\end{equation}
				\end{proposition}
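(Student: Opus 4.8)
The plan is to evaluate $g$ at the point $(\sqrt{\omega},\dots,\sqrt{\omega})$ by hand and show that it equals the value of $g$ at $(1,\dots,1)\in\Om_\grpord^n$ up to a unimodular factor; since the latter point lies in $\Om_\grpord^n$, the bound $|g(1,\dots,1)|\le\|g\|_{\Om_\grpord^n}$ is immediate, and the whole proposition follows. The entire argument rests on writing the factors $\tau_\alpha$ in polar form.

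First I would record the elementary identity behind everything. With $\omega=e^{2\pi\iu/\grpord}$ and $\sqrt{\omega}=e^{\pi\iu/\grpord}$, for every $t\in\{1,\dots,\grpord-1\}$ one has $1-\omega^t=-2\iu\,\sin(\pi t/\grpord)\,(\sqrt{\omega})^{t}$, and since $0<\pi t/\grpord<\pi$ the number $\sin(\pi t/\grpord)$ is a strictly positive real. Hence, for any multi-index $\alpha$ with $|\supp(\alpha)|=\ell$,
\[
\tau_\alpha=\prod_{j:\alpha_j\neq 0}\bigl(1-\omega^{\alpha_j}\bigr)=(-2\iu)^{\ell}\,r_\alpha\,(\sqrt{\omega})^{|\alpha|},\qquad r_\alpha:=\prod_{j:\alpha_j\neq 0}\sin(\pi\alpha_j/\grpord)>0,
\]
where $|\alpha|=\sum_j\alpha_j$. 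Thus $\tau_\alpha$ splits as a scalar $(-2\iu)^{\ell}$ depending only on the support size, a positive real $r_\alpha$, and a $2\grpord$-th root of unity $(\sqrt{\omega})^{|\alpha|}$.

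The key step is to exploit inseparability. If $z^\alpha$ and $z^\beta$ both occur in $g$ then $|\supp(\alpha)|=|\supp(\beta)|=:\ell$ and $\tau_\alpha=\tau_\beta$, so $(-2\iu)^{\ell}r_\alpha(\sqrt{\omega})^{|\alpha|}=(-2\iu)^{\ell}r_\beta(\sqrt{\omega})^{|\beta|}$; taking absolute values gives $r_\alpha=r_\beta$, and then dividing forces $(\sqrt{\omega})^{|\alpha|}=(\sqrt{\omega})^{|\beta|}$. So there is a single unimodular constant $\zeta:=(\sqrt{\omega})^{|\alpha|}$, the same for all monomials $z^\alpha$ appearing in $g$. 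Since evaluating $z^\alpha$ at $(\sqrt{\omega},\dots,\sqrt{\omega})$ gives $(\sqrt{\omega})^{\sum_j\alpha_j}=(\sqrt{\omega})^{|\alpha|}=\zeta$, writing $g(z)=\sum_\alpha a_\alpha z^\alpha$ we obtain
\[
g(\sqrt{\omega},\dots,\sqrt{\omega})=\zeta\sum_\alpha a_\alpha=\zeta\,g(1,\dots,1).
\]
As $(1,\dots,1)\in\Om_\grpord^n$ and $|\zeta|=1$, this yields $|g(\sqrt{\omega},\dots,\sqrt{\omega})|=|g(1,\dots,1)|\le\|g\|_{\Om_\grpord^n}$, which is the claim.

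There is no real obstacle to overcome: the only point needing a touch of care is the modulus argument that deduces $(\sqrt{\omega})^{|\alpha|}=(\sqrt{\omega})^{|\beta|}$ from $\tau_\alpha=\tau_\beta$, which is precisely where the definition of inseparability (equal support size \emph{and} equal $\tau$) enters, and where the positivity of the $\sin(\pi\alpha_j/\grpord)$ factors is used.
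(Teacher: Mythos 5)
Your proof is correct and is essentially the paper's own argument: the polar decomposition $1-\omega^t=-2\iu\sin(\pi t/\grpord)(\sqrt{\omega})^{t}$ is just a rewriting of the paper's identity $(\sqrt{\omega})^k=\iu\tfrac{1-\omega^k}{|1-\omega^k|}$, and both proofs then show inseparable monomials agree at $(\sqrt{\omega},\dots,\sqrt{\omega})$, factor out the common unimodular value $\zeta$, and bound $|g(\vec1)|$ by $\|g\|_{\Om_\grpord^n}$. No substantive differences.
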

				
				\begin{proof}
					We will need an identity for half-roots of unity.
					For $k=1,\ldots, \grpord-1$ we have
					\begin{equation}
						\label{eq:trig-id}
						(\sqrt{\omega})^k=\iu\frac{1-\omega^k}{|1-\omega^k|}\,,
					\end{equation}
					following from the orthogonality of $(\sqrt{\omega})^k$ and $1-\omega^k$ in the complex plane.
					
					We claim that for two monomials $m=z^\alpha$ and $m'=z^{\beta}$
					\begin{equation*}
						m\sim m' \;\implies\; m(\sqrt{\om},\dots, \sqrt{\om})=m'(\sqrt{\om},\dots, \sqrt{\om})\,.
					\end{equation*}
					By definition $m\sim m'$ means $m$ and $m'$ have the same support size (call it $\ell$) and
					\[\textstyle\prod_{j:\alpha_j\neq0}(1-\omega^{\alpha_j}) = \prod_{j:\beta_j\neq0}(1-\omega^{\beta_j})\,.\]
					Dividing both sides by the modulus and multiplying by $\iu^\ell$ allows us to apply \eqref{eq:trig-id} to find
					\[\textstyle\prod_{j:\alpha_j\neq0}(\sqrt{\omega})^{\alpha_j}=\prod_{j:\beta_j\neq0}(\sqrt{\omega})^{\beta_j}\,,\]
					which is exactly $m(\sqrt{\om},\dots, \sqrt{\om})=m'(\sqrt{\om},\dots, \sqrt{\om}).$
					
					Now let $\zeta=m(\sqrt{\omega})\in \T$ for some monomial $m$ in $g$.
					Then because $\zeta$ is independent of $m$, with $g=\sum_{\alpha\in S}a_\alpha z^\alpha$, we have $g(\sqrt{\om},\dots, \sqrt{\om})=\zeta\sum_{\alpha\in S}a_\alpha$ and
					\begin{equation*}
						\textstyle
						|g(\sqrt{\om},\dots, \sqrt{\om})| =|\sum_{\alpha\in S}a_\alpha| = |g(\vec1)|\leq \|g\|_{\Om_\grpord^n}\,.\qedhere
					\end{equation*}
				\end{proof}

				Now we may conclude the proof of Theorem \ref{thm:remez cyclic}.
				
				\begin{proof}[Proof of Theorem \ref{thm:remez cyclic}]
					It remains to prove \textbf{Step 2} estimate \eqref{ineq:2k to k}. With the help of Proposition \ref{prop:reduction to sqrt of omega}, it suffices to show  \eqref{ineq:k to sqrt omega}. 
					Write $f=\sum_{0\leq \ell \leq L}\sum_{1\leq j\leq J_\ell}g_{(\ell,j)}$ in terms of inseparable parts, where $g_{(\ell,j)},1\le j\le J_\ell,0\le \ell \le L$ are as in Proposition \ref{prop:Property A}. Then by Propositions \ref{prop:Property A} (Property A) and \ref{prop:Property B} (Property B)
					\begin{align*}
						|f(\sqrt{\omega},\dots, \sqrt{\omega})|\;&\leq\quad\sum_{0\leq \ell \leq L}\sum_{1\leq j\leq J_\ell}|g_{(\ell,j)}(\sqrt{\omega},\dots, \sqrt{\omega})|\\
						&\leq\quad\sum_{0\leq \ell \leq L}\sum_{1\leq j\leq J_\ell}\|g_{(\ell,j)}\|_{\Om_\grpord^n}\tag{Property B}\\
						&\Lesssim{d,\grpord}\|f\|_{\Om_\grpord^n}\sum_{0\leq \ell \leq L}J_\ell\tag{Property A}\,.
					\end{align*}
					In view of \eqref{ineq:J_L} and $L\le d$, we obtain $|f(\sqrt{\omega},\dots, \sqrt{\omega})|\Lesssim{d,\grpord}\|f\|_{\Om_\grpord^n}$ which is exactly  \eqref{ineq:k to sqrt omega}.  
				\end{proof}
				
\newcommand{\etalchar}[1]{$^{#1}$}

			\end{document}